\pgfplotsset{compat=1.14}
\newcommand{\N}{\mathbb{N}}
\newcommand{\R}{\mathbb{R}}
\newcommand{\Rd}{{\mathbb{R}^d}}
\def\Xint#1{\mathchoice
	{\XXint\displaystyle\textstyle{#1}}%
	{\XXint\textstyle\scriptstyle{#1}}%
	{\XXint\scriptstyle\scriptscriptstyle{#1}}%
	{\XXint\scriptscriptstyle\scriptscriptstyle{#1}}%
	\!\int}
\def\XXint#1#2#3{{\setbox0=\hbox{$#1{#2#3}{\int}$ }
		\vcenter{\hbox{$#2#3$ }}\kern-.6\wd0}}
\def\averageint{\Xint-}
\def\dashint{\Xint-}
\DeclareMathOperator*{\supp}{\rm supp}
\newtheorem{theorem}{Theorem}[section]
\newtheorem{lemma}[theorem]{Lemma}
\newtheorem{definition}[theorem]{Definition}
\theoremstyle{remark}
\newtheorem{remark}[theorem]{Remark}
\newcommand\restr[2]{{
		\left.\kern-\nulldelimiterspace 
		#1 
		\vphantom{\big|} 
		\right|_{#2} 
}}
\newcommand{\pt}{\partial_t}
\renewcommand{\d}{{\rm d}}
\newcommand{\dt}{{\,\mathrm{d}t}}
\newcommand{\dx}{{\,\mathrm{d}x}}
\newcommand{\ds}{{\,\mathrm{d}s}}
\newcommand{\eps}{{\varepsilon}}
\renewcommand{\i}{\ifmmode\mathit{\mathchar"7010 }\else\char"10 \fi}
\renewcommand{\j}{\ifmmode\mathit{\mathchar"7011 }\else\char"11 \fi}
\newcommand{\dist}{\hbox{dist}}
\renewcommand{\le}{\leq} 
\renewcommand{\ge}{\geq}
\def\char{{1\!\mbox{\rm l}}}
\begin{document}
\title[Sharp criteria for free boundary propagation in thin liquid films]{Sharp criteria for the waiting time phenomenon in solutions to the thin-film equation}

\subjclass[2010]{35K25, 35K55, 35K65, 35Q35, 35R35, 76D08}
\keywords{thin-film equation, higher-order degenerate parabolic equation, free boundary problem, finite speed of propagation, waiting time phenomenon}

\author{Nicola De Nitti}
\address[N. De Nitti]{Friedrich-Alexander-Universit\"at Erlangen-N\"urnberg, Department of Mathematics, Chair in Applied Analysis -- Alexander von Humboldt Professorship, Cauerstr. 11, 91058 Erlangen, Germany.}
\email{nicola.de.nitti@fau.de}

\author{Julian Fischer}
\address[J. Fischer]{Institute of Science and Technology Austria (IST Austria), Am Campus 1, 3400 Klosterneuburg, Austria.}
\email{julian.fischer@ist.ac.at}

\begin{abstract}
We establish sharp criteria for the instantaneous propagation of free boundaries in solutions to the thin-film equation. The criteria are formulated in terms of the initial distribution of mass (as opposed to previous almost-optimal results), reflecting the fact that mass is a locally conserved quantity for the thin-film equation. In the regime of weak slippage, our criteria are at the same time necessary and sufficient.
The proof of our upper bounds on free boundary propagation is based on a strategy of ``propagation of degeneracy'' down to arbitrarily small spatial scales: We combine estimates on the local mass and estimates on energies to show that ``degeneracy'' on a certain space-time cylinder entails ``degeneracy'' on a spatially smaller space-time cylinder with the same time horizon. The derivation of our lower bounds on free boundary propagation is based on a combination of a monotone quantity and almost optimal estimates established previously by the second author with a new estimate connecting motion of mass to entropy production.
\end{abstract}

\maketitle
\section{Introduction}
\label{SectionIntro}

\subsection{The thin-film equation}
\label{SubSectionThinFilm}

The thin-film equation
\begin{align}
\label{ThinFilmEquation}
\partial_t u = -\nabla \cdot (u^n \nabla \Delta u)
\end{align}
(with the positive real parameter $n>0$) describes the surface-tension-driven evolution of the height $u(x,t)$ of a viscous thin liquid film on a flat surface.
Like its second-order sibling, the porous medium equation
\begin{align*}
\partial_t u = \Delta u^m = m \nabla \cdot (u^{m-1}\nabla u)
\end{align*}
(with $m>1$; see e.\,g.\ \cite{VazquezPM} for an overview of the corresponding theory), the thin-film equation gives rise to a free boundary problem, the free boundary being the boundary of the liquid film $\partial \{u(\cdot,t)>0\}$. The dynamics of the thin-film equation \eqref{ThinFilmEquation} is mostly of interest in the regime $n\in (0,3)$, as for $n\geq 3$ it is conjectured that the support of solutions remains constant in time. Physically, the parameter $n$ is determined by the boundary condition for the flow at the liquid-solid interface: The case $n = 3$ corresponds to a no-slip boundary condition \cite{RevModPhys}; $n = 2$ takes into account -- roughly speaking -- the Navier slip condition (see \cite{Greenspan, JaegerMikelic}), and various parameters $n \in (1, 3)$ have been suggested to model the effects of stronger ($1 < n < 2$) or weaker ($2 < n < 3$) slippage \cite{Greenspan}. The case $n=1$ arises in the lubrication approximation of the Darcy's flow in the Hele-Shaw cell \cite{GiacomelliOttoHS}.

In the present work, we are interested in the qualitative behavior of the free boundary $\partial \{u(\cdot,t)>0\}$ in the so-called case of complete wetting. Depending on the growth of the initial data $u_0$ near the free boundary, a \emph{waiting time phenomenon} may occur: If the initial data $u_0$ are ``flat enough'' near some point $x_0$ on the initial free boundary -- namely, if $u_0$ grows at most like $|x-x_0|^{4/n}$ near $x_0$ -- , the free boundary will locally remain stationary (or at most move backward) for some time before it finally starts moving forward (see \cite{DalPassoGiacomelliGruen,GruenWTWS,GiacomelliGruen}). The amount of time that passes before the free boundary moves beyond its initial location is called the \emph{waiting time}. On the other hand, in the regime of weak slippage $n \in (2,3)$, it is known that the free boundary will start moving forward instantaneously if the initial data $u_0$ grow steeper than $|x-x_0|^{4/n}$ near the initial free boundary \cite{FischerARMA,FischerAHP}; in the case $n =2$, a similar result holds up to a logarithmic correction term. The restriction $n\geq 2$ in the results of \cite{FischerARMA,FischerAHP} is optimal, as for $n<2$ the stationary state $u(x,t)=(x-x_0)_+^2$ would provide a counterexample. However, even in the regime $n \in (2,3)$ there is a small gap between the sufficient conditions for a waiting time in \cite{DalPassoGiacomelliGruen,GruenWTWS, GiacomelliGruen} and the sufficient conditions for instantaneous forward motion in \cite{FischerARMA,FischerAHP}: This gap is not in terms of the critical growth exponent $4/n$ (which is inferred from the scaling of the equation, see \cite[Section 7]{DalPassoGiacomelliGruen}), but in terms of the norms used to formulate the growth condition. It is the goal of the present work to close this gap, providing a condition for the occurrence of a waiting time phenomenon for a higher-order degenerate parabolic equation which is at the same time necessary and sufficient. Even though the remaining gap is only in terms of norms and not in terms of scaling, closing it requires substantial additional ideas; see Section~\ref{SectionSummary} below for a comparison of our new results to the previous ones in the literature, Section~\ref{SectionMainResults} for precise statements of our theorems, and Section~\ref{SectionStrategy} for a summary of the strategies employed to carry out the proofs.

In contrast to the porous medium equation, due to its fourth-order structure the thin-film equation does not give rise to a comparison principle. In the parameter range $n<\smash{\frac{3}{2}}$, the support of solutions to the thin-film equation may even shrink as shown for example by the moving front solution $u(x,t)=\smash{(x-c_n t)_+^{3/n}}$. Furthermore, many techniques for second-order equations -- in particular from regularity theory -- are not applicable to the thin-film equation. For these reasons, the analysis of the qualitative behavior of the thin-film equation -- and in particular the derivation of lower bounds on free boundary propagation, first accomplished in \cite{FischerARMA,FischerAHP} -- are substantially more challenging than in the case of the porous medium equation.

Due to the fourth order structure of the thin-film equation -- and unlike in the case of the second-order porous medium equation -- , it is also necessary to prescribe an additional boundary condition at the free boundary $\partial \{u(\cdot,t)>0\}$ (in addition to the natural boundary condition $u=0$) in order to prevent ill-posedness \cite{BerettaBertschDalPasso}. Energetic considerations suggest to prescribe the contact angle -- that is, the slope $|\nabla u|$ -- at the free boundary according to Young's law. The case of zero contact angle $|\nabla u|=0$ is called the case of ``complete wetting'', while the case of a fixed positive contact angle $|\nabla u|=b>0$ is known as the case of ``partial wetting''.

In the last decades, an extensive theory of weak solution concepts (see \cite{BerettaBertschDalPasso,BernisFinite2,BernisFriedman,ThinViscous,
WeakInitialTrace,OnAFourthOrder,CauchyGruen}) and strong solution concepts (see \cite{GiacomelliGnannKnuepferOtto,GiacomelliKnuepfer,GiacomelliKnuepferOtto,
GnannIbrahimMasmoudi,GnannPetrache,Gnann,Gnann2,John}) has been developed for the case of vanishing contact angle $|\nabla u|=0$ on $\partial \supp u(\cdot,t)$. However, to date no uniqueness result is known for weak solution concepts in the presence of a free boundary (except for Dirac initial data in the case $n=1$; see \cite{MajdoubMasmoudiTayachi}), while the strong solution concepts are so far limited to local-in-time existence results or small perturbations of self-similar solutions or steady-states. Nevertheless, there is a rich theory of qualitative behavior of solutions to the thin-film equation. The long-time behavior of the thin-film equation has been studied e.\,g.\ in \cite{CarrilloToscani,McCannSeis,Seis}. Finite speed of propagation results for the free boundary have been proven in \cite{BernisFinite,BernisFinite2,ThinViscous,GruenOptimalRatePropagation,
HulshofShishkov}. Sufficient conditions for waiting times have been established rigorously in \cite{DalPassoGiacomelliGruen,GiacomelliGruen}. A formal analysis of the waiting time behavior has been performed in \cite{King}. Based on the discovery of certain new monotonicity formulas, lower bounds on free boundary propagation have been proven in \cite{FischerARMA,FischerJDE,FischerAHP}. For more complex (S)PDEs of thin-film type, see for example \cite{AnsiniGiacomelli,BertozziPughBlowup,FischerGruen,GessGnann,GiacomelliShishkov}, though this list is far from exhaustive.

In the case of partial wetting $|\nabla u|=b>0$ on $\partial \supp u(\cdot,t)$ for some constant $b>0$, the mathematical theory for the thin-film equation is more limited and consists mostly of some existence (and, for strong solution concepts, uniqueness) results; see \cite{BertschGiacomelliKarali,Mellet,Otto} for weak solution concepts and \cite{Degtyarev,Knuepfer,Knuepfer2} for strong solution concepts.

Despite the lack of a comparison principle, the thin-film equation is one of the two notable examples of a nonnegativity-preserving fourth-order equation, the other one being the Derrida-Lebowitz-Speer-Spohn equation (DLSS equation) (see e.\,g.\ \cite{DerridaLebowitzSpeerSpohn,DerivationQDD,FischerDLSSUniqueness,GiaSavTosc,
JuengelMatthesReview,JuengelMatthesExistence,JuengelPinnau}). Recall that the standard linear parabolic equation $\partial_t u = -\Delta^2 u$ fails to preserve positivity.
In contrast to the thin-film equation, solutions to the DLSS equation feature infinite speed of propagation \cite{FischerInfiniteSpeedDLSS}.
For further classes of nonnegativity-preserving higher-order parabolic equations, see e.\,g.\  \cite{BukalJuengelMatthes,LisiniMatthesSavare,MatthesMcCannSavare}.

\subsection{Informal summary of the results}
\label{SectionSummary}

In the present work, in the parameter regime $2<n<3$ we provide conditions on the initial data $u_0$ which are both necessary and sufficient for instantaneous forward motion of the free boundary in solutions to the thin-film equation \eqref{ThinFilmEquation} in the case of zero contact angle $|\nabla u|=0$ on $\partial \supp u(\cdot,t)$.

To give one example of our results, consider the one-dimensional thin-film equation $\partial_t u = -(u^n u_{xxx})_x$ with compactly supported nonnegative initial data $u_0\in H^1(\mathbb{R})$. Denote by $x_0$ the leftmost point in the support of $u_0$. In the regime $2<n<3$, we prove that instantaneous forward motion of the free boundary at $x_0$ occurs if and only if $u_0$ grows faster than $\smash{(x-x_0)_+^{4/n}}$ near the free boundary $x_0$ in the sense of ``averages of the mass''
\begin{align}
\label{ConditionFreeBoundaryPropagation1d}
\limsup_{r\rightarrow 0} r^{-4/n} \dashint_{(x_0,x_0+r)} u_0 \,\d x=\infty.
\end{align}
In other words, a waiting time phenomenon occurs if and only if the opposite condition
\begin{align}
\label{ConditionFreeBoundaryPropagation1dFinite}
\limsup_{r\rightarrow 0} r^{-4/n} \dashint_{(x_0,x_0+r)} u_0 \,\d x<\infty
\end{align}
holds true.

Our new results differ from the previous results in the literature as follows:
\begin{itemize}
\item The best previously known sufficient condition for the occurrence of a waiting time phenomenon for the thin-film equation for $n\in [2,3)$ was
\begin{align}
\label{ConditionWaitingTimeGiacomelliGruen}
\limsup_{r\rightarrow 0} r^{-4/n+1} \bigg(\dashint_{(x_0,x_0+r)} |\nabla u_0|^2 \,\d x\bigg)^{1/2}<\infty
\end{align}
as derived by Dal~Passo, Giacomelli, and Gr\"un in  \cite{DalPassoGiacomelliGruen}.
While for ``regular'' initial data like $u_0(x)=\smash{(x-x_0)_+^\beta}$ near $x_0$ for some $\beta>0$ the condition \eqref{ConditionWaitingTimeGiacomelliGruen} is equivalent to our condition \eqref{ConditionFreeBoundaryPropagation1d}, it fails to capture cases of ``irregular'' initial data: For example, the oscillatory initial data
\begin{align}
\label{OscillatoryInitialData}
u_0(x):=\bigg(2+\sin \frac{1}{x-x_0}\bigg)(x-x_0)_+^{4/n}
\end{align}
meets our new sufficient criterion for the occurrence of a waiting time \eqref{ConditionFreeBoundaryPropagation1dFinite} but fails to meet the previously known condition \eqref{ConditionWaitingTimeGiacomelliGruen}. For a plot of the example \eqref{OscillatoryInitialData}, see Figure~\ref{FigureOscillatory}.
\item The only previous results guaranteeing instantaneous forward motion of the free boundary in solutions to the thin-film equation -- as derived in a series of papers by the second author \cite{FischerARMA,FischerAHP} -- required the slightly stronger condition
\begin{align}
\label{ConditionForwardMotionFischer}
\limsup_{r\rightarrow 0} r^{-4/n} \bigg(\dashint_{(x_0,x_0+r)} u_0^p \,\d x\bigg)^{1/p}=\infty
\end{align}
for a certain $p=p(n)\in (0,1)$, with typically $0<p(n)\leq \frac{1}{2}$. While again for ``regular'' initial data like $u_0(x)=\smash{(x-x_0)_+^\beta}$ near $x_0$ the condition \eqref{ConditionForwardMotionFischer} is equivalent to our new condition \eqref{ConditionFreeBoundaryPropagation1dFinite}, the two conditions differ in the case of ``concentrated'' initial data: For example, letting $\varphi:\mathbb{R}\rightarrow \mathbb{R}^+_0$ be a bump function supported in $[0,1]$, for the initial data
\begin{align}
\label{ConcentratedInitialData}
u_0(x)
:=(x-x_0)_+^{4/n}
+(x-x_0)_+^{4/n-\delta}\cdot \sum_{k=2}^\infty k^{2} \varphi\bigg(k^2 \bigg(x-x_0-\frac{1}{k}\bigg)\bigg)
\end{align}
(for $\delta>0$ small enough) our new condition \eqref{ConditionFreeBoundaryPropagation1d} for instantaneous forward motion of the free boundary is satisfied, but the previously known condition \eqref{ConditionForwardMotionFischer} is not. See Figure~\ref{FigureConcentrated} for an illustration of the example \eqref{ConcentratedInitialData}.
\item We also obtain optimal upper and lower bounds for waiting times, which are both formulated in terms of the quantity
\begin{align}
\label{QuantityWaitingTime}
\sup_{r>0} r^{-4/n} \dashint_{(x_0,x_0+r)} u_0 \,\d x
\end{align}
and differ from each other only by a constant factor.
\end{itemize}

\begin{figure}
\begin{tikzpicture}
\draw (6.5,0.473) -- (-2.0,0.473);
\begin{axis}[hide axis,domain=0.001:0.9,legend pos=outer north east,every axis plot post/.append style={mark=none,samples=300,smooth}]
\addplot {x*x*(2+sin(200.0/x))};
\end{axis}
\end{tikzpicture}
\caption{Plot of the example \eqref{OscillatoryInitialData}. While the initial data $u_0$ are clearly bounded from above and from below by a multiple of $(x-x_0)^{4/n}$, due to the rapid oscillations near the free boundary the limit \eqref{ConditionWaitingTimeGiacomelliGruen} is infinite. As a result, the sufficient criterion for waiting times from \cite{DalPassoGiacomelliGruen} is not applicable. In contrast, our sufficient condition in Theorem~\ref{th:masssuf} shows that for this initial data indeed a waiting time phenomenon occurs.
\label{FigureOscillatory}
}
\end{figure}
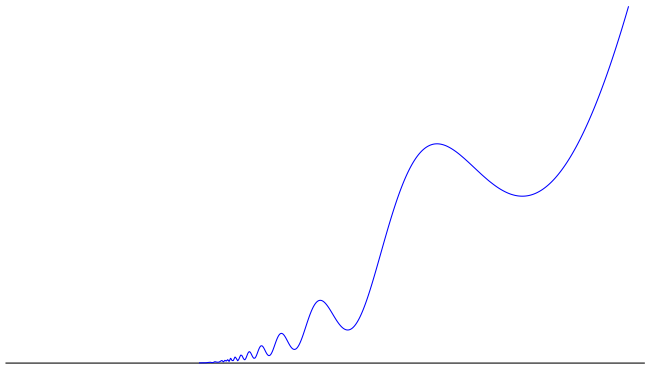

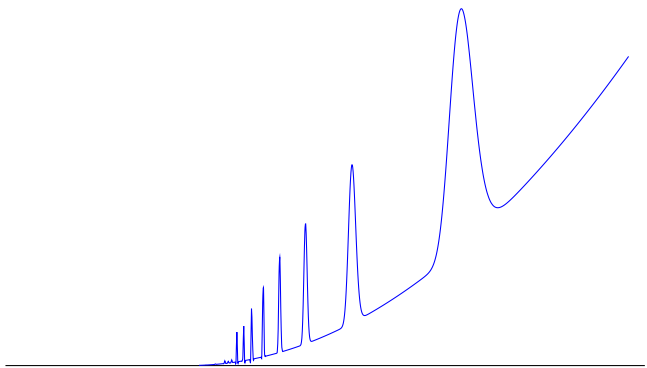
\begin{figure}
\begin{tikzpicture}
\draw (6.5,0.473) -- (-2.0,0.473);
\begin{axis}[hide axis,domain=0.02:0.9,legend pos=outer north east,every axis plot post/.append style={mark=none,samples=500,smooth}]
\addplot {x*x+(x^0.8)*((sin(150.0/x)*sin(150.0/x))^(5.0+4.0/x))};
\end{axis}
\end{tikzpicture}
\caption{Illustration of the example \eqref{ConcentratedInitialData}. The initial data features infinitely many ``bumps'' accumulating at $x_0$. The ``bumps'' near a point $x>x_0$ have mass of order $(x-x_0)^{4/n-\delta}$ but width of order $|x-x_0|^2$. As a consequence of the mass estimate for the bumps, our sufficient condition for instantaneous forward motion of the free boundary in Theorem~\ref{th:massnec} is applicable. In contrast, the sufficient conditions for instantaneous forward motion from \cite{FischerARMA,FischerAHP} are not applicable for $\delta>0$ small enough, as the increasingly strong concentration of the bumps cause the limit in \eqref{ConditionForwardMotionFischer} to be finite.
\label{FigureConcentrated}
}
\end{figure}

Our sufficient criterion for a waiting time \eqref{ConditionFreeBoundaryPropagation1d} is not limited to the regime $n\in (2,3)$, but holds for the full range $n \in (1,3)$. However, the stationary state $u(x,t)=(x-x_0)_+^2$ shows that in the regime $n<2$ one cannot expect a condition like \eqref{ConditionFreeBoundaryPropagation1dFinite} to be sufficient for instantaneous forward motion of the free boundary, as $(x-x_0)_+^2$ grows steeper than $\smash{(x-x_0)_+^{4/n}}$ in this regime. Nevertheless, the constructions in \cite{FischerAHP} show that our condition \eqref{ConditionFreeBoundaryPropagation1d} is in fact sharp among all conditions formulated in terms of the growth of the initial data at the free boundary: In \cite[Theorem 3]{FischerAHP} it is shown that there exist initial data with only slightly steeper growth than $\smash{(x-x_0)_+^{4/n}}$ for which instantaneous forward motion occurs.

{\bf Notation.} Throughout the paper, we use standard notation for Lebesgue and Sobolev spaces. For a domain $\Omega\subset \mathbb{R}^d$ we denote for $p\geq 1$ by $L^p(\Omega)$ the space of all measurable functions $f$ with finite norm $||f||_{L^p(\Omega)}:=(\int_\Omega |f|^p \dx)^{1/p}$.
The Sobolev space $W^{1,p}(\Omega)$, $1\leq p\leq \infty$, consists of all measurable functions $f\in L^p(\Omega)$ whose distributional derivative $\nabla f$ belongs to $L^p(\Omega)$; it is equipped with the norm $||f||_{W^{1,p}}:=(\int_\Omega |f|^p + |\nabla f|^p \dx)^{1/p}$. Similarly, we define higher-order Sobolev spaces $W^{k,p}(\Omega)$, $k\geq 2$, consisting of those functions in $W^{k-1,p}(\Omega)$ whose $k$th distributional derivatives belongs to $L^p(\Omega)$.
We also use the standard abbreviation $H^k(\Omega):=W^{k,2}(\Omega)$.
For a function $f:\Omega\times [0,T]\rightarrow \mathbb{R}$ depending on space and time, we denote by $\nabla$ and $\Delta$ the (weak) gradient and the (weak) Laplacian with respect to spatial coordinates only. The (weak) time derivative of $f$ is denoted by $\partial_t f$.
As usual, for a Banach space $X$ we denote by $X'$ its dual.
Given a Banach space $X$, by $L^p([0,T];X)$ we denote the usual Lebesgue-Bochner space of strongly measurable maps $f:[0,T]\rightarrow X$ with $||f||_{L^p([0,T];X)}^p:=\int_{[0,T]} |f|_X^p \dt<\infty$.
By $B_r(x)$ we denote the ball of radius $r$ around the point $x$.

\section{Main Results}
\label{SectionMainResults}

The rigorous definition of a \emph{waiting time} which our results refer to is given as follows.
\begin{definition}
	\label{DefinitionWaitingTime}
	Let $u_0\in L^1(\mathbb{R}^d)$ and $u\in L^\infty([0,T);L^1(\mathbb{R}^d))$. For any point $x_0\in \mathbb{R}^d\setminus \supp u_0$ in the complement of the support of $u_0$, we define the \emph{waiting time} $T^*$ of $u$ at $x_0$ as
	\begin{align*}
	T^* := \operatorname{essinf} \{t>0:x_0\in \supp u(\cdot,t)\},
	\end{align*}
	where $\supp u(\cdot,t)$ is understood in the sense of support of a distribution.
	
	For any point $x_0\in \partial \supp u_0$ on the boundary of the initial support, we define the waiting time $T^*$ of $u$ at $x_0$ as
	\begin{align*}
	T^* := \operatorname{essinf} \{t>0:x_0\notin \overline{\mathbb{R}^d \setminus \supp u(\cdot,t)}\}.
	\end{align*}
\end{definition}
In other words, for a point $x_0$ which lies outside of the support of the initial data, we define the waiting time $T^*$ to be the first time at which the support of the solution $u$ reaches $x_0$. For a point $x_0$ on the initial free boundary $\partial \supp u_0$, we define the waiting time to be the first time at which $x_0$ is contained in the interior of the support of the solution $u$.

We defer the (rather technical) definitions of solutions to the thin-film equation  and first state our main results (Theorem~\ref{th:masssuf} and Theorem~\ref{th:massnec}).
In the regime $n \in (1,3)$, we provide the following sufficient condition for the occurrence of a waiting time phenomenon, along with lower bounds on the waiting time.
\begin{theorem}\label{th:masssuf}
Let $d \in \{1,2,3\}$ and $n \in (1,3)$. Let $u_0\in H^1(\Rd)$ be compactly supported and nonnegative. In the case $n\in [2,3)$, let $u:\R^d \times [0,T) \to \R$ be an energy-dissipating weak solution to the thin-film equation \eqref{ThinFilmEquation} with zero contact angle and initial data $u_0$ in the sense of Definition~\ref{DefinitionEnergyDissipatingWeakSolution}. In the case $n\in (1,2)$, let $u:\R^d \times [0,T) \to \R$ instead be a weak solution to the thin-film equation \eqref{ThinFilmEquation} with zero contact angle and initial data $u_0$ in the sense of Definition~\ref{DefinitionWeakSolution}, and assume that $u$ has been constructed by the approximation procedure in \cite{ThinViscous}.

Let $x_0 \in \partial \supp u_0 ~\smash{\bigcup}~ (\Rd \setminus \supp u_0)$ be a point on the boundary or outside of the support of the initial data. Suppose that there exists a constant $\kappa>0$ such that for all $r>0$ the estimate
\begin{align}
\label{ConditionWaitingTime}
\averageint_{B_r(x_0)} u_0 \dx \le \kappa\, r^{\frac{4}{n}}
\end{align}
holds.
If $x_0\in \partial \supp u_0$, suppose furthermore that $\supp u_0$ satisfies an exterior cone condition at $x_0$ with some positive opening angle $\lambda>0$.\footnote{As usual, we say that a closed set $U\subset \mathbb{R}^d$ satisfies an exterior cone condition at $x_0\in \partial U$ with opening angle $\lambda>0$ if its complement $\Rd\setminus U$ contains a cone $C_{x_0}$ with tip $x_0$, opening angle $\lambda>0$, and arbitrary axis and height. In the one-dimensional case, the notion of ``exterior cone condition'' reduces to the requirement that either $(x_0,x_0+\delta) \cap \supp u_0$ or $(x_0-\delta,x_0) \cap \supp u_0$ is empty for some $\delta>0$ small enough, and the notion of ``opening angle'' becomes irrelevant.}

Then $u$ has a positive waiting time $T^*$ at $x_0$ (in the sense of Definition~\ref{DefinitionWaitingTime}) and there exists a constant $c$ (depending only on $d$, $n$, and possibly $\lambda$) such that the waiting time $T^*$ is bounded from below by
\begin{align*}
T^* \ge c \, \kappa^{-n}.
\end{align*}
\end{theorem}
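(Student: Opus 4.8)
The strategy is a "propagation of degeneracy" argument down to arbitrarily small spatial scales, as announced in the abstract. Assume for contradiction that the waiting time is small, say $T^* < c\kappa^{-n}$ with $c$ to be chosen. The goal is to show that under the mass hypothesis \eqref{ConditionWaitingTime}, the solution stays "degenerate" (roughly, has controllably small mass and energy) on a nested family of space-time cylinders $B_{r}(x_0) \times [0, T^*]$ as $r \to 0$, which forces $x_0 \notin \overline{\R^d \setminus \supp u(\cdot,t)}$ (resp.\ $x_0 \notin \supp u(\cdot,t)$) for $t$ small, contradicting the definition of $T^*$.

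\textbf{Key steps.} First I would set up the right local quantities on a cylinder $Q_r := B_r(x_0) \times [0,\tau]$: the supremal local mass $M(r,\tau) := \sup_{t \le \tau} \dashint_{B_r(x_0)} u(\cdot,t)\,\d x$ and a local (weighted) energy $E(r,\tau)$ involving $\int |\nabla u|^2$ against a cutoff, together with an entropy-type quantity adapted to the exponent $n$. The core of the proof is an \emph{iteration inequality}: there should exist $\theta \in (0,1)$ and constants depending only on $d,n,\lambda$ such that, if on $Q_{r}$ one has $M(r,\tau) \le \Gamma r^{4/n}$ and $E(r,\tau) \le \Gamma r^{4/n-2+d}$ (or the appropriate scaling-consistent bound), then the same type of bound with the \emph{same} $\Gamma$ holds on $Q_{\theta r}$ — provided $\tau \le c\Gamma^{-n}$. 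Two ingredients feed this: (i) a \emph{local mass-transport / continuity estimate} — since mass is locally conserved, the change in $\dashint_{B_{\theta r}} u$ over time $[0,\tau]$ is controlled by the flux $u^n \nabla \Delta u$ through $\partial B_{\theta r}$, which after Cauchy–Schwarz and interpolation is bounded by $\tau$ times powers of the local energy and mass on the larger ball $B_r$; (ii) an \emph{energy estimate} localized via a cutoff between $B_{\theta r}$ and $B_r$, where the error terms produced by derivatives of the cutoff carry negative powers of $r$ that are exactly compensated by the scaling of the mass/energy bounds. Feeding (i) and (ii) into each other and absorbing constants yields the self-improving inequality for $\tau$ below the threshold $c\Gamma^{-n}$. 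Iterating from the initial scale (where \eqref{ConditionWaitingTime} provides $M(r,0) \le \kappa r^{4/n}$ for \emph{all} $r$, and the global energy bound $u_0 \in H^1$ together with the exterior cone condition controls the initial energy at small scales) gives $M(\theta^j r_0, T^*) \to 0$ fast enough that $u \equiv 0$ on a neighborhood of $(x_0,t)$ for all $t < c\kappa^{-n}$, i.e.\ $T^* \ge c\kappa^{-n}$.

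\textbf{Main obstacle.} The delicate point is the localized \emph{energy} estimate (ingredient (ii)) for a fourth-order degenerate equation with no comparison principle: testing the equation against $u$ times a cutoff $\eta$ produces terms like $\int u^n \nabla\Delta u \cdot \nabla(u\eta^2)$, which upon integration by parts generate $\int u^n |\Delta u|^2 \eta^2$ (good) but also a host of lower-order terms with up to four derivatives landing on $\eta$ and fractional powers of $u$, whose signs are indefinite. Controlling these requires careful interpolation (Gagliardo–Nirenberg type inequalities with weights, or Hardy-type inequalities near the free boundary) to trade them against the good term plus the mass, and it is here that the restriction on $n$ and the exterior cone condition enter, ensuring the relevant weighted inequalities hold with the correct scaling. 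A secondary subtlety is tracking the \emph{time threshold}: one must verify that the accumulated constants across the infinitely many iteration steps do not blow up — this forces $\theta$ and the choice of $\Gamma$ (comparable to $\kappa$) to be tuned so that the per-step loss is summable, which is precisely what pins the waiting-time bound to the sharp exponent $T^* \gtrsim \kappa^{-n}$. The rigorous implementation at the level of weak solutions (justifying the integrations by parts and the use of $u\eta^2$ as a test function) will rely on the approximation schemes in the cited existence results and on the energy-dissipation property built into Definition~\ref{DefinitionEnergyDissipatingWeakSolution} (resp.\ the construction of \cite{ThinViscous} in the range $n \in (1,2)$).
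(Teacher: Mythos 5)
Your overall architecture (propagation of degeneracy through nested cylinders, with a coupled iteration for a local mass and a local energy/entropy) is indeed the paper's strategy, and your flux-based control of the mass influx matches Step~3 of the actual proof. However, there is a genuine gap at the point where you anchor the energy half of the iteration: you claim that ``the global energy bound $u_0\in H^1$ together with the exterior cone condition controls the initial energy at small scales.'' It does not, at the scaling your iteration needs. The iteration requires the local energy on $B_r(x_0)$ to be of order $r^{8/n-2+d}$ (the square of the natural slope $r^{4/n-1}$ times the volume), but the hypothesis \eqref{ConditionWaitingTime} is a statement about mass only; for admissible initial data such as the oscillatory example \eqref{OscillatoryInitialData} one has $\int_{(x_0,x_0+r)}|\nabla u_0|^2\,\d x\sim r^{8/n-3}\gg r^{8/n-1}$, so the base case of your induction fails for precisely the data that distinguish the sharp result from the earlier condition \eqref{ConditionWaitingTimeGiacomelliGruen} of \cite{DalPassoGiacomelliGruen}. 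An iteration seeded by the initial local energy can only reproduce that suboptimal criterion. (The exterior cone condition cannot rescue this either; in the paper it is used only to transfer the mass bound \eqref{ConditionWaitingTime} to nearby center points $\tilde x_0$, so that vanishing of $u$ on a full neighborhood of $x_0$ can be concluded.)

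The missing idea is the time weight: the paper's degeneracy condition for the energy is \eqref{DegeneracyEnergy}, with the factor $t^\beta/T^\beta$, so that the weighted energy vanishes at $t=0$ and the initial energy never enters the iteration. The price is the extra term $\beta\int t^{-1}\int t^\beta|\nabla u|^2\varphi^6$ in the weighted energy identity \eqref{TWeightedEnergy}, which is then interpolated (via Bernis--Gr\"un and Gagliardo--Nirenberg) against the dissipation and the \emph{mass} on the larger ball --- this is how the energy at positive times is regenerated from the mass alone, using the smoothing effect of the fourth-order operator, cf.\ \eqref{LocalizedEnergyWeak}--\eqref{LocalizedMassWeak}. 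Without this device (or an equivalent one) your scheme does not close. A secondary omission: in the range $n\in(1,2)$ the Bernis-type inequalities underlying the energy estimate are not available for $d\geq 2$, and the paper replaces the weighted energy by a weighted $\alpha$-entropy $\int t^\beta u^{\alpha+1}$; your passing mention of ``an entropy-type quantity'' gestures at this but does not supply the needed structure. Your final remark about tuning $\theta$ so that ``per-step losses are summable'' also understates the mechanism: the paper's iteration is a two-variable Stampacchia argument in which $E(k+1)$ and $M(k+1)$ are bounded by superlinear powers of $E(k)$ and $M(k)$, and closing it requires choosing the smallness parameters $\eps,\delta$ so that all exponents are strictly positive, with the threshold $C\eps^{-1}T^{1/n}\kappa\leq 1$ producing the bound $T^*\geq c\kappa^{-n}$.
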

In the regime of strong slippage $n\in(1,2)$, our preceding sufficient condition for a waiting time phenomenon is not a necessary condition, as the counterexample $u(x,t)=(x-x_0)_+^2$ demonstrates. Nevertheless, the approach of \cite{FischerAHP} shows that our sufficient condition for a waiting time phenomenon is (at least in one dimension $d=1$) optimal among all conditions formulated in terms of the growth of the initial data near the free boundary.

On the other hand, in the regime $n\in(2,3)$ our preceding sufficient condition for the occurrence of a waiting time phenomenon is also a necessary condition, as our next result shows. Furthermore, the lower bounds on the waiting time in Theorem~\ref{th:masssuf} above are optimal up to a universal constant factor.
\begin{theorem}\label{th:massnec}
Let $d=1$ and let $n\in(2,3)$. Let $u_0\in H^1(\mathbb{R})$ be compactly supported and nonnegative.  Let $u:\R^d \times [0,T) \to \R$ be an energy-dissipating weak solution to the thin-film equation \eqref{ThinFilmEquation} with zero contact angle and initial data $u_0$ in the sense of Definition~\ref{DefinitionEnergyDissipatingWeakSolution}. Let $x_0\in \partial \supp u_0~\smash{\bigcup}~(\mathbb{R}^d\setminus \supp u_0)$ be a point on the boundary or outside of the support of the initial data.
Then there exists a constant $C$ (depending only on $n$ and $d$) such that the
waiting time $T^*$ of $u$ at $x_0$ (in the sense of Definition~\ref{DefinitionWaitingTime}) is bounded from above by 
\begin{equation}
T^* \le C\left(\sup_{r>0} r^{-\frac{4}{n}} \dashint_{(x_0-r,x_0+r)} u_0 \dx \right)^{- n}.
\end{equation}
In particular, if the initial data $u_0$ satisfy
\begin{align*}
\limsup_{r\rightarrow 0} r^{-\frac{4}{n}} \dashint_{(x_0-r,x_0+r)} u_0 \dx = \infty
\end{align*}
at a point on the initial free boundary $x_0\in \partial \supp u_0$, the free boundary starts moving forward immediately at $x_0$, without waiting time.
\end{theorem}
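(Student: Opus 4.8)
The plan is to establish the scale-by-scale bound
\[
T^* \;\le\; C(n)\,\Big(r^{-4/n}\dashint_{(-r,r)} u_0\,\d x\Big)^{-n}\qquad\text{for every fixed }r>0,
\]
from which the displayed estimate follows by taking the supremum over $r$, and the concluding ``in particular'' assertion by letting $r\to 0$. Fixing $r$ and writing $\kappa := r^{-4/n}\dashint_{(-r,r)} u_0\,\d x$, I would use the two–parameter scaling symmetry $u(x,t)\mapsto \lambda^{a} u(\lambda x,\lambda^{na+4}t)$ of \eqref{ThinFilmEquation} to rescale to the normalized configuration $r = 1$, $\dashint_{(-1,1)} u_0\,\d x = 1$; after this reduction it suffices to show that any compactly supported nonnegative $u_0\in H^1(\mathbb R)$ with $\dashint_{(-1,1)} u_0\,\d x = 1$ and $0\in\partial\supp u_0\cup(\mathbb R\setminus\supp u_0)$ has waiting time at $0$ bounded by a constant $C(n)$. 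I argue by contradiction: suppose $T^*>T_0$ for a large $T_0 = T_0(n)$ to be fixed at the end.

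\emph{Localizing the degeneracy.} Since $0\in\partial\supp u_0\cup(\mathbb R\setminus\supp u_0)$, after a reflection $x\mapsto -x$ we may assume $u_0\equiv 0$ on a one-sided interval $(0,\delta_0)$ (the one-dimensional form of the exterior cone condition). The finite-speed-of-propagation estimates carried by the solution concept of Definition~\ref{DefinitionEnergyDissipatingWeakSolution} (cf.\ \cite{BernisFinite2,ThinViscous}), combined with the continuity of $t\mapsto \supp u(\cdot,t)$ and the hypothesis $T^*>T_0$, produce a radius $\rho_0\in(0,\delta_0]$ with $u\equiv 0$ on the space–time cylinder $Q := (0,\rho_0)\times(0,T_0)$. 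Thus $0$ lies on the free boundary for all $t\in[0,T_0)$, while conservation of mass keeps a definite amount $\int_{(-1,0)} u(\cdot,t)\,\d x \ge c(n)$ of mass ``pressed against'' this stationary free boundary throughout $[0,T_0)$.

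\emph{The monotone weighted functional.} Following the monotonicity-formula method of \cite{FischerARMA,FischerAHP}, I would work with a functional of the form
\[
\Phi_{y_0}(t) := \int_{-\infty}^{y_0}\Big(\tfrac12\,|\partial_x u(x,t)|^2 + \Lambda_n\, g_\alpha\big(u(x,t)\big)\Big)\,(y_0-x)^{-\gamma}\,\d x,\qquad y_0\in(0,\rho_0),
\]
with a concave entropy density $g_\alpha$ ($g_\alpha(0)=0$, $g_\alpha''(s)=s^{\alpha-2}$) and exponents $\alpha=\alpha(n)$, $\gamma=\gamma(n)\in(0,1)$ and a constant $\Lambda_n$ chosen --- this being possible precisely because $n<3$ --- so that $\Phi_{y_0}$ is monotone (non-increasing) in $t$ and bounded below as long as $y_0\notin\supp u(\cdot,t)$, hence on $[0,T_0)$ by the previous step. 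One then has $\tfrac{\d}{\d t}\Phi_{y_0}(t) = -D(t)$ with $D(t)\ge 0$, and $D(t)$ dominates, up to an $n$-dependent factor (the weight being bounded below on $(-1,0)$), a local $\alpha$-entropy production $P_\alpha(t)$; integrating yields $\int_0^{T_0} D(t)\,\d t \lesssim \Phi_{y_0}(0)$ modulo harmless lower-order terms. The almost-optimal estimates of \cite{FischerARMA,FischerAHP} extract from this setup the earlier sufficient criterion \eqref{ConditionForwardMotionFischer}; but because they are phrased through $\Phi_{y_0}(0)$ --- which contains the boundary energy and hence scales like $\|u_0\|_{H^1}^2$, a quantity that the mass normalization leaves unbounded --- they cannot by themselves deliver the sharp $L^1$ criterion, as witnessed by the concentrated data \eqref{ConcentratedInitialData}.

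\emph{Motion of mass versus entropy production --- the new input and the main obstacle.} The remaining ingredient, which I expect to be the crux of the argument, is a quantitative link between displacement of mass near $x_0$ and entropy production. On the one hand, testing the divergence form $\partial_t u = -\partial_x(u^n\partial_{xxx} u)$ against a cut-off $\chi\in C_c^\infty$ with $\chi\equiv 1$ on $(-1,0)$ and $\supp\chi\subset(-2,\rho_0)$, and applying Cauchy–Schwarz through the splitting $u^n\partial_{xxx} u = u^{n/2}\,(u^{n/2}\partial_{xxx} u)$ --- with $\int_{\supp\partial_x\chi}(u^{n/2}\partial_{xxx} u)^2$ identified with the local entropy production and $u$ bounded on $\supp\partial_x\chi$ by parabolic smoothing away from the free boundary --- controls the net mass flux through $(-2,\rho_0)$ by (a power of) the entropy production, so the trapped mass stays trapped. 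On the other hand, the genuinely new estimate is a matching \emph{lower} bound on dissipation: a configuration carrying a definite amount of mass against a stationary free boundary it cannot cross cannot relax without producing entropy at a definite rate, i.e.\ $P_\alpha(t)\ge c'(n)>0$ for $t\in[0,T_0)$. Combined with the monotonicity balance $\int_0^{T_0} P_\alpha(t)\,\d t \lesssim \int_0^{T_0} D(t)\,\d t \lesssim \Phi_{y_0}(0)$ --- where one must additionally replace the crude bound by $\Phi_{y_0}(0)$ with a bound by a \emph{localized} entropy of $u_0$ plus a screened boundary flux, both $\le C(n)$ once the decay of the weight $(y_0-x)^{-\gamma}$ is exploited to discard the (possibly large) mass far from $0$ --- this forces $c'(n)\,T_0 \le C(n)$, a contradiction once $T_0 > C(n)/c'(n)$, and hence $T^*\le C(n)$. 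The delicate points --- establishing the lower bound on entropy production ``as long as mass is pressed against the free boundary'', and reconciling, in the correct norms and scalings, the $L^1$ mass bookkeeping with the entropy and monotonicity estimates --- are exactly what upgrades the $L^p$ criterion of \cite{FischerARMA,FischerAHP} to the sharp $L^1$ criterion asserted in the theorem.
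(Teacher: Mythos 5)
Your high-level theme --- combining the monotonicity machinery of \cite{FischerARMA,FischerAHP} with a new quantitative link between motion of mass and entropy production --- is indeed the strategy of the paper. But the step you yourself flag as ``the crux'' is a genuine gap, and it is not the route the paper takes. You assert a uniform-in-time lower bound $P_\alpha(t)\ge c'(n)>0$ on entropy production ``as long as a definite amount of mass is pressed against a stationary free boundary''. No such pointwise-in-time dissipation bound is proved anywhere (and proving one would require a compactness/rigidity argument for weak solutions that is unavailable here); moreover it would at best give $T^*\le C(n)$ after normalization, whereas the sharp constant structure of the theorem comes from a blow-up argument, not from a rate bound. The paper instead runs a \emph{dichotomy} at time $T^*/2$ for the weighted local mass $\int u\varphi\,\dx$, $\varphi=\psi_r^k$: either at least half of it persists up to $T^*/2$, in which case Jensen's inequality applied to the time-integrated dissipation bound \eqref{mon2} (which follows from the Chipot--Sideris blow-up argument of \cite{FischerARMA,FischerAHP} restarted at $t_0=T^*/2$ together with the monotonicity formula \eqref{MonotonicityFormula}) directly yields $T^*\le C\big(r^{-4/n}\dashint_{B_r(x_0)}u_0\big)^{-n}$; or at least half escapes, in which case testing the weak formulation with $\varphi$ and invoking the interpolation Lemma~\ref{WeightedInterpolation} (which bounds $\int\varphi^{4-\eps}u^{n+1-3\alpha}$ by a product of powers of the dissipation and of the weighted mass itself) produces a closed integral inequality for $\int u\varphi\,\dx$ whose ODE comparison shows the mass can only drop by $C\,T^{*-1/n}r^{4/n+1}$, again forcing the claimed bound. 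Neither branch requires a lower bound on the instantaneous dissipation rate.

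Several supporting steps in your outline are also not available. The monotone functional you propose (non-increasing, containing the boundary energy $\tfrac12|\partial_x u|^2$, with weight $(y_0-x)^{-\gamma}$, $\gamma\in(0,1)$) is not one of the established monotone quantities; the actual formula (Theorem~\ref{MonotonicityFormulaRigorous}) is an \emph{increasing} weighted entropy $\int u^{1+\alpha}|x-x_0|^{\gamma}\dx$ with $\gamma<-1$ and $-1<\alpha<0$, and contains no energy term --- which is essential, since the whole point is to avoid any dependence on $\|u_0\|_{H^1}$. Your appeal to ``parabolic smoothing away from the free boundary'' to bound $u$ pointwise on $\supp\partial_x\chi$ has no basis for energy-dissipating weak solutions of a fourth-order degenerate equation (the paper replaces this by the Gagliardo--Nirenberg interpolation of Lemma~\ref{WeightedInterpolation}, and the exponent bookkeeping there is delicate precisely because $1+\alpha$ may be close to $0$). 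Finally, extracting a fixed vacuum cylinder $(0,\rho_0)\times(0,T_0)$ from Definition~\ref{DefinitionWaitingTime} at a point $x_0\in\partial\supp u_0$ is not immediate and is not needed in the paper's argument. As written, the proposal does not constitute a proof.
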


\begin{remark}
In the multidimensional case $d\in \{2,3\}$, by combining the ideas of our proof of Theorem~\ref{th:massnec} with the approach used for the multidimensional case in \cite{FischerARMA}, one could prove a similar upper bound on the waiting time for $n\in (2,3)$, namely a bound of the form
\begin{align*}
T^* \leq C(d,n)
\bigg(\limsup_{\delta\rightarrow 0} \limsup_{r\rightarrow 0} r^{-4/n} \dashint_{(\partial \supp u_0 \cap B_\delta(x_0))+B_r} u_0 \,\d x\bigg)^{-n}
\end{align*}
for any point $x_0\in \partial \supp u_0$ near which $\partial \supp u_0$ is a $C^4$ manifold.
However, due to the already substantial length of the present paper we refrain from carrying out the estimates.
\end{remark}

Let us now state the precise definitions of solutions to the thin-film equation that our main results are concerned with.
For $d \in \{2,3\}$ and the parameter range
\begin{align*}
n \in \left(2-\sqrt{\frac{8}{8+d}}, 3\right),
\end{align*}
in \cite{CauchyGruen} an existence result has been proven for the following class of solutions to the thin-film equation. Earlier results of \cite{BernisFinite2} show the same existence result in $d=1$ for $n\in \left(\frac{1}{2},3\right)$.
\begin{definition}[Energy-dissipating weak solutions]
	\label{DefinitionEnergyDissipatingWeakSolution}
	Let $d\in \{1,2,3\}$ and $n \in (2,3)$. Let $T>0$ and let $u_0\in \smash{H^1(\Rd)}$ have compact support.
	We call a nonnegative function $\smash{u\in L^\infty([0,T); H^1(\Rd)\cap L^1(\Rd))}$, $u\geq 0$, an \emph{energy-dissipating weak solution of the thin-film equation with zero contact angle and initial data $u_0$} if the following conditions are satisfied:
	\begin{itemize}
		\item[a)] We have $\nabla u^{\frac{n+2}{6}} \in L^6(\Rd \times [0,T))$, $u^{\frac{n-2}{2}}\nabla u \otimes D^2 u \in L^2(\Rd \times [0,T))$, and $\chi_{\{u > 0\}}u^{\frac{n}{2}}\nabla\Delta u \in L^2(\Rd \times [0,T))$.
		\item[b)] For all $\alpha \in (\max\left\{-1,\frac{1}{2} - n\right\}, 2-n) \setminus \{0\}$, we have $D^2 u^{\frac{1+n+\alpha}{2}} \in L^2(\Rd \times [0,T))$ and $\nabla u^{\frac{1+n+\alpha}{4}} \in L^4(\Rd \times [0,T))$.
		\item[c)] It holds that $u \in H^1_{loc}([0,T);(W^{1,p}(\Rd))')$ for all $p > \frac{4d}{2d + n(2-d)}$.
		\item[d)] For any $\psi \in L^2([0,T),W^{1,\infty}(\Rd))$ and any $T>0$, we have
		\begin{align*}
		\int_{0}^{T} \langle \pt u, \psi \rangle_{(W^{1,p}(\Rd))'\times W^{1,p}(\Rd)} \dt = \int_0^T \int_{\Rd \cap \{u>0 \}} u^n \nabla \Delta u \cdot \nabla \psi \dx \dt.
		\end{align*}
		\item[e)] $u$ attains its initial data $u_0$ in the sense $\lim\limits_{t\to 0}u(\cdot,t)=u_0(\cdot)$ in $L^1(\Rd)$.
	\end{itemize}
\end{definition}

In the parameter range $n\in (1,2)$, we need to resort to a different solution concept, at least in case $d\in \{2,3\}$, as in this case the existence of energy-dissipating weak solutions is unknown.
\begin{definition}[Weak solutions]
	\label{DefinitionWeakSolution}
	Let $d\in \{1,2,3\}$ and $n \in (\frac{1}{8},2)$. Let $T>0$ and let $u_0\in \smash{H^1(\Rd)}$ have compact support.
	We say that a nonnegative function $u\in L^\infty([0,T); H^1(\Rd)\cap L^1(\Rd))$ is a \emph{weak solution of \eqref{ThinFilmEquation} with zero contact angle and initial data $u_0$} if the following conditions are satisfied:
	\begin{enumerate}
		\item[a)] $u \in H^1_{loc}\left([0,T);(W^{1,p}(\Rd))' \right)$ for all $p > \frac{4d}{2d+n(2-d)}$;
		\item[b)] For any $\alpha \in (\max\left\{-1,\frac{1}{2} - n\right\}, 2-n) \setminus \{ 0\}$, we have $D^2 u^{\frac{1+n+\alpha}{2}} \in L^2(\Rd \times [0,T))$ and $\nabla u^{\frac{1+n+\alpha}{4}} \in L^4(\Rd \times [0,T))$.
		\item[c)] for any $\psi \in L^\infty([0,T);C^3_{c}(\Rd))$
		we have for any $T>0$
		\begin{align*}
		&\int_0^T \langle \pt u,\psi \rangle_{(W^{1,p}(\Omega))'\times W^{1,p}(\Omega)} \dt 
		\\
		&= \int_0^T\int_{\Rd \cap  \{u>0\}} u^n \nabla u \cdot \nabla \Delta \psi \dx \dt
		\\&\quad + n \int_0^T\int_{\Rd \cap  \{u>0\}} u^{n-1}\nabla u \cdot D^2\psi \cdot \nabla u \dx \dt
		\\&\quad
		+ \frac{n}{2} \int_0^T\int_{\Rd \cap \{u>0\}} u^{n-1}|\nabla u|^2\Delta \psi \dx \dt
		\\&\quad 
		+ \frac{n(n-1)}{2} \int_0^T\int_{\Rd \cap  \{u>0\}} u^{n-2}|\nabla u|^2\nabla u \cdot \nabla \psi \dx \dt.
		\end{align*}
		\item[d)] $u$ attains its initial data $u_0$ in the sense $\lim\limits_{t\to 0}u(\cdot,t)=u_0(\cdot)$  in $L^1(\Rd)$.
	\end{enumerate}
\end{definition}

\section{Strategies for the proofs of the main results}
\label{SectionStrategy}

\subsection{Strategy for the lower bounds on free boundary propagation}
\label{SubSectionStrategyLB}

Our argument for the lower bounds on free boundary propagation for the thin-film equation relies in parts crucially on the results and strategies of the previous works by the second author \cite{FischerARMA,FischerAHP}.
In the particular case of one dimension $d=1$, the key results of \cite{FischerARMA,FischerAHP} may be summarized as follows: For $n\in (2,3)$, for any point $x_0$ on the boundary or outside of the support of the initial data $u_0$ the waiting time is bounded from above by
\begin{align*}
T^* \leq C \Bigg(\sup_{r>0} r^{-4/n} \bigg(\dashint_{(x_0-r,x_0+r)} u_0^p \,\d x\bigg)^{1/p}\Bigg)^n,
\end{align*}
where $C>0$ and $p\in (0,1)$ depend only on $n$. The results of \cite{FischerARMA,FischerAHP} are based on the discovery of certain new monotonicity formulas for solutions to the thin-film equation, taking the form of a weighted entropy inequality
\begin{align}
\label{MonotonicityFormula}
\frac{\mathrm{d}}{\mathrm{d}t} \int_{\mathbb{R}} u^{1+\alpha} |x-x_0|^{\gamma} \,\d x
\geq c \int_{\mathbb{R}} u^{1+\alpha+n} |x-x_0|^{\gamma-4} + |\nabla u^{\frac{1+\alpha+n}{4}}|^4 |x-x_0|^{\gamma} \,\d x
\end{align}
and being valid for suitable $-1<\alpha<0$ and suitable $\gamma<-1$, as long as the support of the solution $u(\cdot,t)$ does not touch the singularity of the weight at $x_0$ {(for a rigorous statement of this inequality, see Theorem~\ref{MonotonicityFormulaRigorous} in the appendix)}. The monotonicity formula enables one to apply a differential inequality argument due to Chipot and Sideris \cite{ChipotSideris}: Suppose, for the sake of simplicity, that $x_0$ is the leftmost point in the support of the solution. Using H\"older's inequality and assuming that the support of $u$ remains to the right of $x_0$, one obtains from the monotonicity formula applied with $x_0-\delta$ in place of $x_0$
\begin{align*}
&\frac{\mathrm{d}}{\mathrm{d}t} \int_{\mathbb{R}} u^{1+\alpha} |x-x_0+\delta|^{\gamma} \,\d x
\\&
\geq c \delta^{-\frac{(\gamma+1)n}{(1+\alpha)}-4} \bigg( \int_{\mathbb{R}} u^{1+\alpha} |x-x_0+\delta|^{\gamma} \,\d x\bigg)^{\frac{1+\alpha+n}{1+\alpha}}.
\end{align*}
This implies finite-time blowup of $\int_{\mathbb{R}} u^{1+\alpha}(\cdot,t) |x-x_0+\delta|^{\gamma} \,\d x$ and thereby a contradiction to the assumption that the support of $u(\cdot,T)$ remains to the right of $x_0$ as soon as
\begin{align*}
T\geq C \delta^{\frac{(1+\gamma)n}{(1+\alpha)}+4} \bigg(\int_{\mathbb{R}} u_0^{1+\alpha} |x-x_0+\delta|^{\gamma} \,\d x\bigg)^{-\frac{n}{(1+\alpha)}},
\end{align*}
so, in particular, as soon as
\begin{align*}
T \geq C \bigg(\delta^{-4(1+\alpha)/n} \dashint_{(x_0,x_0+\delta)} u_0^{1+\alpha} \,\d x\bigg)^{-n/(1+\alpha)}.
\end{align*}
The problem for ``concentrated'' initial data like \eqref{ConcentratedInitialData} is that the integral on the right-hand side of the previous formula is much smaller than suggested by the relation
\begin{align*}
\dashint_{(x_0,x_0+\delta)} u_0^{1+\alpha} \,\d x \sim \Bigg(\dashint_{(x_0,x_0+\delta)} u_0 \,\d x\Bigg)^{1+\alpha}
\end{align*}
which would be valid for initial data like $u_0(x)\sim (x-x_0)_+^\beta$.

The proof of our sharp sufficient condition \eqref{ConditionFreeBoundaryPropagation1d} for instantaneous forward motion of the free boundary is based on the following idea: If initially some amount of mass is present in the interval $(x_0,x_0+\delta)$, then there are basically two options -- either at least half of the mass remains near the interval $(x_0,x_0+\delta)$ up until at least time $T/2$, or at least half of the mass ``escapes'' from the vicinity of the interval before time $T/2$. In the former case, the monotonicity formula \eqref{MonotonicityFormula} entails a lower bound on $\int_{\mathbb{R}} u^{1+\alpha}(x,T/2) |x-x_0|^{\gamma} \,\d x$ by a simple application of H\"older's inequality, and it turns out that this lower bound is sufficient for the derivation of our result. In the latter case, a combination of the monotonicity formula \eqref{MonotonicityFormula} with a careful estimate based on testing the PDE \eqref{ThinFilmEquation} with a suitable smooth cutoff shows that motion of mass entails entropy production, again yielding a lower bound for $\int_{\mathbb{R}} u^{1+\alpha}(x,T/2) |x-x_0|^{\gamma} \,\d x$. In both cases, we then use the estimates of \cite{FischerARMA,FischerAHP}, starting at time $t_0=T/2$ instead of $t_0=0$, to conclude. The full argument is provided in Section~\ref{SectionProofNecessary}.

\subsection{Strategy for the upper bounds on free boundary propagation}
\label{SubSectionStrategyUB}

Our strategy for the derivation of upper bounds on free boundary propagation is based on the following concept: In the regime $n\in [2,3)$, we say that a solution to the thin-film equation $u$ is \emph{degenerate} on a parabolic cylinder $B_{r}(x_0)\times [0,T]$ if it satisfies both
\begin{subequations}
\begin{align}
\label{DegeneracyMass}
&\sup_{t \in (0,T)}\averageint_{B_r(x_0)} u \dx
\leq \varepsilon T^{-1/n} r^{4/n}
\end{align}
and
\begin{align}
\label{DegeneracyEnergy}
&\sup_{t \in (0,T)} \averageint_{B_r(x_0)} \frac{t^\beta}{T^\beta} |\nabla u|^2 \dx + \int_0^T \averageint_{B_r(x_0)} \frac{t^\beta}{T^\beta} \Big(\Big|\nabla u^{\frac{n+2}{6}}\Big|^6 + u^n |\nabla \Delta u|^2 \Big) \dx \dt
\\&~~~~~~~~~~~~~~~~~~~~~~~~~~~~~~~~~~~~~~~~~~~~~~~~~~~~~~
\nonumber
\leq \varepsilon^\delta T^{-2/n} (r^{4/n-1})^2
\end{align}
\end{subequations}
for some appropriately chosen constants $\varepsilon=\varepsilon(d,n)>0$ and $\delta=\delta(d,n)>0$ and some suitably chosen $\beta\in (0,1)$. In the regime $n \in (1,2)$, we use a closely related ansatz, which replaces the degeneracy condition in terms of the energy \eqref{DegeneracyEnergy} by a corresponding condition in terms of a (localized) entropy, see \eqref{entropy2} below for details. In the remainder of this exposition, we shall focus only on the case $n\in [2,3)$.

The central idea of our proof is to show that -- provided that the initial data also satisfy a degeneracy condition of the type \eqref{ConditionFreeBoundaryPropagation1dFinite} -- the degeneracy of $u$ on a parabolic cylinder $B_{r}(x_0)\times [0,T]$ implies the degeneracy of $u$ on the spatially smaller parabolic cylinder $B_{r/2}(x_0)\times [0,T]$ with the same time horizon $T$. Propagating the degeneracy down to $r\rightarrow 0$, this essentially shows $u(x_0,t)=0$ for $t\leq T$.

The general spirit of the proof is inspired by the approach of \cite{DalPassoGiacomelliGruen,GruenWTWS, GiacomelliGruen}, one difference being that in our formulation the iteration \`{a} la Stampacchia present in \cite{DalPassoGiacomelliGruen,GruenWTWS, GiacomelliGruen} is done essentially explicitly by the propagation of degeneracy. However, the key difference of our approach to \cite{DalPassoGiacomelliGruen,GruenWTWS, GiacomelliGruen} is that the latter is formulated in terms of the local energy only and does not keep track of the propagation of mass. This substantially simplifies the estimates, but comes at the cost of formulating the degeneracy condition on the initial data in terms of the local energy $\averageint_{B_r(x_0)} |\nabla u_0|^2 \dx$, making it impossible to derive an optimal result. By keeping track of the propagation of mass via \eqref{DegeneracyMass}, we are able to eliminate the dependence on the initial energy by introducing a weight $(t/T)^\beta$ in the degeneracy condition for the energy \eqref{DegeneracyEnergy}.

The rough idea for the propagation of the first degeneracy condition \eqref{DegeneracyMass} is the following: Starting with degenerate initial data $u_0$ (in the sense that the quantity \eqref{QuantityWaitingTime} is finite), after choosing $T$ appropriately (depending on the size of the quantity \eqref{QuantityWaitingTime}) it suffices to control the possible influx of mass $u$ into the smaller ball $B_{r/2}(x_0)$ up to time $T$. The degeneracy properties \eqref{DegeneracyMass} and \eqref{DegeneracyEnergy} on a spatially larger parabolic cylinder in turn ensure that the influx of mass into the smaller ball $B_{r/2}(x_0)$ remains sufficiently limited up to time $T$; to see this, we test the PDE \eqref{ThinFilmEquation} with a weight and estimate the right-hand side carefully.

In order to propagate the second degeneracy condition \eqref{DegeneracyEnergy} which involves the energy, we cannot rely on the energy of the initial data, as the localized $H^1$ norms of the initial data do not need to reflect the degeneracy of the initial data near $x_0$ (recall for instance the counterexample \eqref{OscillatoryInitialData}). We instead rely on the regularization properties of the nonlinear fourth-order parabolic operator, reducing the problem to an estimate on the local mass. This idea is close in spirit to the consideration (for the thin-film equation on a bounded domain $\Omega$)
\begin{align*}
\frac{\d}{\d t} \int_{\Omega} |\nabla u|^2 \,\d x
\leq -c \int_\Omega |\nabla u^{\frac{n+2}{6}}|^6 \,\d x
\leq -c(\Omega) \bigg(\int_\Omega u \,\d x\bigg)^{n-4} \bigg(\int_\Omega |\nabla u|^2 \,\d x\bigg)^3
\end{align*}
where the first step is just the energy dissipation property, combined with Bernis-Gr\"un's inequality, and the second step is a simple application of H\"older's inequality. This estimate implies by an elementary ODE argument a bound of the form
\begin{align}
\label{EstimateInTermsOfEntropy}
\int_\Omega |\nabla u(\cdot,t)|^2 \,\d x
\leq C(\Omega) t^{-1/2} \bigg(\sup_{s\in [0,t]}\int_\Omega u(\cdot,s) \,\d x\bigg)^{2-n/2},
\end{align}
which is now independent of $\int_\Omega |\nabla u_0|^2 \,\d x$, but blows up for $t\rightarrow 0$. Note that the blowup near initial time is the reason for our choice of including the factor $t^\beta/T^\beta$ in our condition \eqref{DegeneracyEnergy}.
A result of the type \eqref{EstimateInTermsOfEntropy} has been the basis for the existence theory for the thin film equation with measure-valued initial data in \cite{WeakInitialTrace}.

As for our purposes a global estimate on the energy in terms of the mass like \eqref{EstimateInTermsOfEntropy}  is insufficient -- we rather need to estimate a localized energy -- , to show the degeneracy condition for the energy \eqref{DegeneracyEnergy} on the spatially smaller parabolic cylinder we additionally need to control the influx of energy into the smaller ball $B_{r/2}(x_0)$ suitably. It turns our that the latter may be achieved using the control on mass and energy provided by the assumptions \eqref{DegeneracyMass} and \eqref{DegeneracyEnergy} on the bigger cylinder. In total, we obtain the degeneracy \eqref{DegeneracyEnergy} on the smaller cylinder $B_{r/2}(x_0)\times [0,T]$ as a result of the degeneracies \eqref{DegeneracyMass} and \eqref{DegeneracyEnergy} on the bigger cylinder $B_{r}(x_0)\times [0,T]$.

\section{Proof of the necessary condition for the waiting time phenomenon}
\label{SectionProofNecessary}

We now provide the proof of the upper bounds on waiting times and the sharp sufficient criteria for instantaneous forward motion of the free boundary stated in Theorem~\ref{th:massnec}.

\begin{proof}[Proof of Theorem \ref{th:massnec}]
{Recall that the upper bounds on waiting times of \cite{FischerARMA} and \cite{FischerAHP} are formulated in terms of the initial entropy, which may provide a suboptimal bound in case of concentrated initial data. Nevertheless, they will form the base for our optimal result:}
	Let $T^\ast:=\inf\{t\geq 0:\supp u(\cdot, t)\cap (-\infty,x_0) = \emptyset\}$. {It is our goal to show that by time $T^*/2$, the entropy must have increased sufficiently for an application of the results of \cite{FischerARMA,FischerAHP}.}
	
	To this aim, fix $r>0$ and let $\psi_r:{\R}\to \R$ be a function
	supported in $B_r(x_0)$ and symmetric around $x_0$ such that $0\leq \psi_r\leq
	1$, $\phi_r'(x)\leq 0$ for $x>x_0$,	$\psi_r\equiv 1$ on $B_{r/2}(x_0)$, and $|\nabla\psi_r|\leq
	C r^{-1}$, $|D^2\psi_r|\leq C  r^{-2}$, $|D^3\psi_r|\leq C  r^{-3}$, $|D^4\psi_r|\leq
	C r^{-4}$. Let $\varphi = \psi_r^k$, with $k$ to be chosen later large enough. 
	In the proof of the theorem, we shall distinguish two cases:
	\begin{enumerate}
		\item[1. ] $\quad \displaystyle \int_{B_r(x_0)}u(x,t)~\varphi(x) \dx ~\ge~ \frac{1}{2} \int_{B_r(x_0)} u_0\varphi \dx$ \ for all $t \in \left(0,T^*/2\right)$,\\{i.\,e.\ at least half of the initial (weighted) mass in an $r$-neighborhood of $x_0$ remains there up to time $T^*/2$, or}
		\item[2. ]$\quad \displaystyle \int_{B_r(x_0)} u(x,t)~\varphi(x) \dx ~\le~ \frac{1}{2} \int_{B_r(x_0)} u_0\varphi \dx$ \ for some $t \in \left(0,T^*/2\right)$,\\{i.\,e.\ at least half of the initial (weighted) mass in an $r$-neighborhood of $x_0$ ``escapes'' from the $r$-neighborhood prior to time $T^*/2$.}
	\end{enumerate}
{We will show that both cases entail sufficient entropy production up to time $T^*/2$ such that an application of the upper bounds on waiting times from \cite[Theorem 1]{FischerARMA} respectively \cite[Theorem 3]{FischerAHP} starting at time $T^*/2$ yields an optimal upper bound on the waiting time.}

	\smallskip
	\noindent\emph{Case 1.} 
	By applying {the upper bound on waiting times from} \cite[Theorem 1]{FischerARMA} respectively \cite[Theorem 3]{FischerAHP} -- depending on the value of $n \in (2,3)$ -- starting at time $t_0=T^*/2$ instead of $t_0=0$, we obtain
	\begin{align*}
	\left(T^* - \frac{T^*}{2}\right) \le C r^{4+ \frac{n}{\alpha + 1}(1+\gamma)} \left(\int_{\R}u^{\alpha + 1}\left(x,T^*/2 \right) |x-x_0+r|^\gamma \dx \right)^{-\frac{n}{\alpha + 1}}
	\end{align*}
	for certain suitable $-1 < \alpha < 0$ and $\gamma < -1$.
	{The monotonicity formula \eqref{MonotonicityFormula} (which holds for the chosen values of $\alpha$ and $\gamma$, since the results of \cite[Theorem 1]{FischerARMA} respectively \cite[Theorem 3]{FischerAHP} are based on it) entails the lower bound on the entropy at time $T^*/2$
	\begin{align*}
	&\int_{\mathbb{R}} u^{1+\alpha}(x,\tfrac{T^*}{2}) |x-x_0+r|^{\gamma} \,\d x
	\\&
\geq c \int_0^{T^*/2} \int_{\mathbb{R}} u^{1+\alpha+n} |x-x_0+r|^{\gamma-4} + |\nabla u^{\frac{1+\alpha+n}{4}}|^4 |x-x_0+r|^{\gamma} \,\d x \, \d t.
    \end{align*}		
	Plugging this estimate into the previous inequality, we deduce}
	\begin{equation}
	\label{mon2}
	\begin{split}
	&
	r^{-\frac{4}{n}}
	\Bigg(
	\int_0^{T^*/2} \dashint_{B_r(x_0)} \left|\nabla
	u^\frac{\alpha + n + 1}{4}\right|^4 +r^{-4}u^{\alpha + n + 1}\dx\dt
	\Bigg)^\frac{1}{\alpha + 1}
	\leq C{T^\ast}^{-\frac{1}{n}}.
	\end{split}
	\end{equation}
	{We now show that the continued presence of mass in the $r$-neighborhood of $x_0$ entails entropy production:}
	Jensen's inequality yields
	\begin{align*}
	{T^*}^{-\frac{1}{n}} &\ge 	C
	r^{-\frac{4}{n}}
	\Bigg(r^{-4} \int_0^{T^*/2} \Bigg( \dashint_{B_r(x_0)} u \dx \Bigg)^{\alpha + n + 1}\dt \Bigg)^\frac{1}{\alpha + 1}.
	\end{align*}
	Using the assumption {of continued presence of mass}
	\begin{align*}
	\int_{B_r(x_0)}u(\cdot,t) \varphi \dx \ge \frac{1}{2} \int_{B_r(x_0)} u_0\varphi \dx 	\quad \text{ for } \ t \in \left(0,T^*/2\right),
	\end{align*}
	we obtain
	\begin{align*}
	{T^*}^{-\frac{1}{n}}
	&\ge
	C
	r^{-\frac{4}{n}}~{T^\ast}^{\frac{1}{\alpha + 1}}
	r^{-\frac{4}{\alpha + 1}} \left( \dashint_{B_r(x_0)}  u_0\varphi \dx\right)^\frac{\alpha + n + 1}{\alpha + 1}.
	\end{align*}
	This implies
	\begin{align*}
	{T^*}^{-\frac{\alpha + n + 1}{n(\alpha + 1)}} &\ge C r^{-\frac{4(1+\alpha +n)}{n(\alpha + 1)}}\left( \dashint_{B_r(x_0)}  u_0\varphi \dx\right)^\frac{\alpha + n + 1}{\alpha + 1},
	\end{align*}
	which directly yields the desired estimate
	 \begin{align*} T^* \le C\left(r^{-\frac{4}{n}} \dashint_{B_r(x_0)} u_0 \dx \right)^{- n}.
	 \end{align*}
	
	\medskip
	\noindent\emph{Case 2.}
	{If at least half of the (weighted) mass escapes from the $r$-neighborhood of $x_0$ before time $T^*/2$, we would like to make use of the weak formulation of the PDE \eqref{ThinFilmEquation} and the monotonicity formula \eqref{MonotonicityFormula} to show that motion of mass entails entropy production:}
	For a smooth cut-off function $\varphi$ and any $T \in (0, T^*/2)$, we have
	\begin{align*}
	&\int_{\R} u(x,T)\varphi \dx
	\\&
	=\int_{\R}u_0 \varphi \dx
	+\int_0^T \int_{\R}u^n\nabla \Delta u \cdot\nabla \varphi \dx\dt
	\\&
	=\int_{\R}u_0 \varphi \dx
	-\int_0^T \int_{\R}u^n D^2u :D^2 \varphi + nu^{n-1}\nabla u\cdot D^2u \cdot \nabla
	\varphi \dx\dt
	\\&
	=\int_{\R}u_0 \varphi \dx
	+\int_0^T \int_{\R}u^n \nabla u \cdot \nabla \Delta \varphi
	+n u^{n-1}\nabla u \cdot D^2\varphi \cdot \nabla u
	\dx\dt
	\\&~~~
	+\int_0^T \int_{\R}
	\frac{n}{2}u^{n-1}|\nabla u|^2 \Delta \varphi
	+\frac{n(n-1)}{2}u^{n-2}|\nabla u|^2\nabla u \cdot\nabla \varphi
	\dx\dt
	\\&
	\geq
	\int_{\R}u_0 \varphi \dx
	-C\int_0^T\int_{B_r(x_0)}
	u^\frac{n+1-3\alpha}{4} \left|\nabla u^\frac{\alpha + n + 1}{4}\right|^3
	|\nabla \varphi|
	\dx\dt
	\\&~~~
	-C\int_0^T\int_{B_r(x_0)}
	u^{n+1}\left(|\Delta^2 \varphi|+\frac{|D^2\varphi|^3}{|\nabla \varphi|^2}\right)
	\dx\dt
	.
	\end{align*}
	{In principle, one may be tempted to estimate the last two terms by the terms in the monotonicity formula \eqref{MonotonicityFormula} in order to prove that motion of mass entails entropy production, for instance by bounding $\smash{\int_0^T \int_{B_r(x_0)} } u^{n+1} \dx\dt$ by interpolating between $\smash{\int_0^{T^*/2} \int_{B_r(x_0)}} |\nabla u^{(1+\alpha+n)/4}|^4 \dx\dt$ (to gain spatial integrability; recall that $\alpha<0$) and $\sup_{t\in [0,T^*/2]} \int_{B_r(x_0)} u^{1+\alpha} \dx$ (to gain time integrability). However, due to the involved exponents this attempt fails (as $1+\alpha$ may be arbitrarily close to $0$, see \cite{FischerAHP}). Instead, we need to replace $\int_{B_r(x_0)} u^{1+\alpha} \dx$ in this interpolation argument by the weighted mass $\int_{\R}u \varphi \dx$ itself, resulting in technical difficulties and yielding (in total) a differential inequality (in integral form) for the weighted mass $\int_{\R}u \varphi \dx$.
	More precisely, we get} by H\"{o}lder's inequality
	\begin{align*}
	&\int_{\R}u(x,T)\varphi \dx
	\\&
	\geq
	\int_{\R}u_0 \varphi \dx
	-C\int_0^T 
	\left(
	\int_{B_r(x_0)} \varphi^{4-\eps} ~u^{n+1-3\alpha} \dx
	\right)^\frac{1}{4}
	\\&~~~\quad \times
	\left(\int_{B_r(x_0)}
	\frac{|\nabla \varphi|^\frac{4}{3}}{\varphi^\frac{4-\eps}{3}}
	 \left|\nabla u^\frac{\alpha + n + 1}{4}\right|^4
	+\left(\frac{|\Delta^2\varphi|^\frac{4}{3}}{\varphi^\frac{4-\eps}{3}}
	+\frac{|D^2\varphi|^4}{|\nabla \varphi|^\frac{8}{3}\varphi^\frac{4-\eps}{3}}\right)
	u^{\alpha + n + 1}
	\dx\right)^\frac{3}{4}\dt
	.
	\end{align*}
	From Lemma \ref{WeightedInterpolation}, it follows that
	\begin{align*}
	&\int_{\R}u(x,T)\varphi \dx
	\\&
	~~\geq
	\int_{\R}u_0 \varphi \dx
	-C
	\int_0^T\left(\int_{B_r(x_0)}  \left|\nabla u^\frac{\alpha + n + 1}{4}\right|^4
	+r^{-4}u^{\alpha + n + 1}\dx\right)^\frac{\vartheta(n+1-3\alpha)}{4(1+\alpha+n)}
	\\&~~\quad\times
	\left(\int_{B_r(x_0)} \varphi u
	\dx\right)^\frac{(1-\vartheta)(n+1-3\alpha)}{4}
	\\&~~\quad\times
	\left(\int_{B_r(x_0)}
	\frac{|\nabla \varphi|^\frac{4}{3}}{\varphi^\frac{4-\eps}{3}}
	 \left|\nabla u^\frac{\alpha + n + 1}{4}\right|^4
	+\left(\frac{|\Delta^2\varphi|^\frac{4}{3}}{\varphi^\frac{4-\eps}{3}}
	+\frac{|D^2\varphi|^4}{|\nabla \varphi|^\frac{8}{3}\varphi^\frac{4-\eps}{3}}\right)
	u^{\alpha + n + 1}
	\dx
	\right)^\frac{3}{4}\dt
	.
	\end{align*}
	{Recall that $\varphi:=\psi_r^k$, where $\psi_r:{\R^d}\to \R$ is} a function supported in $B_{r}(x_0)$ with $0\leq \psi_r\leq
	1$, $\psi_r\equiv 1$ on $B_{r/2}(x_0)$, and $|D^m \psi_r|\leq
	Cr^{-m}$ for $1\leq m\leq 4$. { By choosing} $k$ large enough (depending on $\eps$), we obtain
	\begin{align*}
	&\int_{\R^d}u(x,T)\varphi \dx
	\\&
	~~~\geq
	\int_{\R^d}u_0 \varphi \dx
	-C r^{-1}
	\int_0^T
	\left(\int_{B_r(x_0)}  \left|\nabla u^\frac{\alpha + n + 1}{4}\right|^4
	+r^{-4}u^{\alpha + n + 1}\dx\right)^{\frac{\vartheta(n+1-3\alpha)}{4(1+\alpha+n)}
		+\frac{3}{4}}
	\\&~~~~~~~~~~~~~~~~~~~~~~~~~~~~~~~~~~~\quad \times
	\left(\int_{B_r(x_0)} u\varphi \dx\right)^\frac{(1-\vartheta)(n+1-3\alpha)}{4}\dt
	,
	\end{align*}
	{which is the desired integral inequality for the weighted mass $\int_{\R^d}u \varphi \dx$.}
	Since the solution of the differential equation 
	 \begin{align*} \frac{\d}{\d t}z(t) = q(t) \cdot [z(t)]^{m}\end{align*}
	is given by 
	 \begin{align*} z(t) = \left(z(0)^{1-m} - (m-1) \int_0^t q(s)\,\d s \right)^{\frac{1}{1-m}},\end{align*}
	a comparison argument yields (note that we have $(1-\vartheta)(n+1-3\alpha)<4$ by Lemma \ref{WeightedInterpolation} and we have $\vartheta(n+1-3\alpha)/(1+\alpha+n)=(n-3\alpha)/(4+\alpha+n)<1$ by $\alpha>-1$)
	\begin{align*}
	&\left(\int_{\R^d}  u(x,T) \varphi \dx\right)^\frac{4-(1-\vartheta)(n+1-3\alpha)}{4}
	\\& \ 
	\geq
	\left(\int_{\R^d}  u_0 \varphi \dx\right)^\frac{4-(1-\vartheta)(n+1-3\alpha)}{4}
	-Cr^{-1}
	T^{\frac{1}{4}-\frac{\vartheta(n+1-3\alpha)}{4(1+\alpha+n)}}
	\\&~~~~~~~~~~~~~ \times
	\left(\int_0^T \int_{B_r(x_0)}  \left|\nabla u^\frac{\alpha + n + 1}{4}\right|^4
	+r^{-4}u^{\alpha + n + 1}\dx\dt
	\right)^{\frac{\vartheta(n+1-3\alpha)}{4(1+\alpha+n)}+\frac{3}{4}}
	~.
	\end{align*}
	By making use of {the upper bound on the waiting time \eqref{mon2} (which was obtained from \cite{FischerARMA,FischerAHP} and the monotonicity formula)}, we infer for $T\leq T^*/2$
	\begin{align*}
	&\left(\int_{\R^d}  u(x,T) \varphi
	\dx\right)^\frac{3-n+3\alpha+\vartheta(n+1-3\alpha)}{4}
	\\ &~~~
	\geq
	\left(\int_{\R^d}  u_0 \varphi
	\dx\right)^\frac{3-n+3\alpha+\vartheta(n+1-3\alpha)}{4} \\&~~~~~~\quad 
	-C
	T^\ast r^{-1}
	\left(C{T^\ast}^{-\frac{\alpha + n + 1}{n}}r^\frac{4(\alpha + 1)+n}{n}
	\right)^{\frac{3-n+3\alpha+\vartheta(n+1-3\alpha)}{4(1+\alpha+n)}
		+\frac{n}{\alpha + n + 1}}
	\\&~~~
	\geq
	\left(\int_{\R^d}  u_0 \varphi
	\dx\right)^\frac{3-n+3\alpha+\vartheta(n+1-3\alpha)}{4}
	\\&~~~
	\quad -C
	{T^\ast}^{-\frac{3-n+3\alpha+\vartheta(n+1-3\alpha)}{4n}}
	\\&~~~~~~\quad \quad \times
	\left(r^{\frac{(4+n)(1+\alpha+n)}{n}-4-\alpha-n}
	\right)^{\frac{3-n+3\alpha+\vartheta(n+1-3\alpha)}{4(1+\alpha+n)}
		+\frac{n}{\alpha + n + 1}}
	\\&~~~
	\geq
	\left(\int_{\R^d}  u_0 \varphi
	\dx\right)^\frac{3-n+3\alpha+\vartheta(n+1-3\alpha)}{4}
	\\&~~~
	\quad -C
	{T^\ast}^{-\frac{3-n+3\alpha+\vartheta(n+1-3\alpha)}{4n}}
	r^\frac{(4+n)(3-n+3\alpha+\vartheta(n+1-3\alpha))}{4n}
	~,
	\end{align*}
i.\,e.	
\begin{align*}  
	\int_{B_r(x_0)} u(x,T) \varphi \dx \geq \int_{B_r(x_0)} u_0 \varphi
\dx -C
{T^\ast}^{-\frac{1}{n}}
r^{\frac{4}{n} +1}
\end{align*}
for any $T\leq T^*/2$.
	Combining this lower bound with the assumption {of ``escape of mass''}
	\begin{align*}
	\int_{B_r(x_0)}u(\cdot,t)\varphi \dx \le \frac{1}{2} \int_{B_r(x_0)} u_0\varphi \dx
	\quad \text{ for some } \ t \in \left(0,T^*/2\right),
	\end{align*}
	we obtain the desired estimate
	\begin{align*}
	T^* &\le \left(C r^{-\frac{4}{n}} \dashint_{B_r(x_0)} u_0 \varphi \dx \right)^{-n}  
	\le C\left( r^{-\frac{4}{n}} \dashint_{B_r(x_0)} u_0 \dx \right)^{-n}.
	\end{align*}
\end{proof}
In the proof of Theorem \ref{th:massnec}, we have used the following technical interpolation lemma.
\begin{lemma}
	\label{WeightedInterpolation}
	Let $d=1$, $n \in (2,3)$, and $\alpha \in (-1, 0)$ satisfying $\alpha + n < 2$. Let $u\in L^1(\mathbb{R})$ be a nonnegative function such that $u^\frac{\alpha + n + 1}{4}\in
	W^{1,4}(\mathbb{R})$. Let $\varphi:\mathbb{R}\rightarrow [0,1]$ be a
	smooth cut-off 
	function which symmetric around $x_0$, monotone decreasing in $|x-x_0|$, and satisfies $0 \le \varphi \le 1$ as well as
	\begin{align*}
	\displaystyle \varphi(x) = \begin{cases} 
	1, \quad & x \in B_{r/2}\left(x_0\right), \\
	 \displaystyle 0, & x \in \R \setminus B_r(x_0), \end{cases}
	\end{align*}
	and $|\nabla \varphi|\leq C$.
	For any $0 <\eps\ll 1$ small enough (depending only on $\alpha$, $n$, and $d$), there exists a constant $C>0$ (depending also only on $\alpha$, $n$, and $d$) such that the estimate
	\begin{align*}
	\int_{B_r(x_0)}& \varphi^{4-\eps} u^{n+1-3\alpha} \dx
	\\
	&~~\leq
	C
	\left(\int_{B_r(x_0)}  \left|\nabla u^\frac{\alpha + n + 1}{4}\right|^4
	+r^{-4}u^{\alpha + n + 1}\dx\right)^\frac{\vartheta(n+1-3\alpha)}{\alpha + n + 1}
	\\&\qquad \times
	\left(\int_{B_r(x_0)} \varphi u
	\dx\right)^{(1-\vartheta)(n+1-3\alpha)}
	\end{align*}
	holds, 
	 where $\vartheta \in (0,1)$ is given by
	\begin{align*}
	\vartheta =  \frac{(\alpha + n + 1)(n-3\alpha)}{(n+1-3\alpha)(4+\alpha + n)}.
	\end{align*}
	Furthermore, $\vartheta$ satisfies $(1-\vartheta)(n+1-3\alpha)<4$.
\end{lemma}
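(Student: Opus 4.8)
The plan is to read Lemma~\ref{WeightedInterpolation} as a weighted one‑dimensional Gagliardo–Nirenberg inequality and to prove it after normalising the geometry and the nonlinearity. First I would rescale: both sides transform consistently under $x\mapsto x_0+r(x-x_0)$ — the factor $r^{-4}$ in the Dirichlet bracket being precisely what makes that bracket scale invariant — and, in fact, imposing this consistency pins down $\vartheta$ uniquely, which is where the displayed formula for $\vartheta$ comes from. So one may take $x_0=0$, $r=1$, with $|\nabla\varphi|\le C$, $\varphi\equiv 1$ on $B_{1/2}$, $\varphi\equiv 0$ outside $B_1$. Next I would substitute $v:=u^{\beta/4}$, $\beta:=\alpha+n+1\in(2,3)$, $\gamma:=n+1-3\alpha$, so that $v\ge 0$, $v\in W^{1,4}(B_1)$, and $u^{n+1-3\alpha}=v^{q}$ with $q:=4\gamma/\beta>4$. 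Writing $E:=\int_{B_1}(|v'|^4+v^4)\dx$, $M:=\int_{B_1}\varphi v^{4/\beta}\dx$, $\theta_1:=\tfrac{\gamma-1}{\beta+3}$, $\theta_2:=\tfrac{3\gamma+\beta}{\beta+3}$ (so that $\theta_1=\vartheta\gamma/\beta\in(0,1)$, $\theta_2=(1-\vartheta)\gamma\ge 1$, and the homogeneity identity $\beta\theta_1+\theta_2=\gamma$ holds), the claim becomes $\int_{B_1}\varphi^{4-\eps}v^{q}\dx\le CE^{\theta_1}M^{\theta_2}$. Two elementary facts will be used repeatedly: $M^{\beta}\le C\int_{B_1}v^4\dx\le CE$ (Hölder and $\varphi\le 1$); and, since $\varphi\equiv 1$ on $B_{1/2}$ and $v$ is Hölder continuous ($W^{1,4}\hookrightarrow C^{0,3/4}$ in one dimension), an interpolation estimate \emph{at the centre} $v(0)\le CE^{\beta/(4(\beta+3))}M^{3\beta/(4(\beta+3))}$, obtained by comparing $v(0)$ with the weighted mass of $v^{4/\beta}$ over the sub‑interval on which $v\ge v(0)/2$ and distinguishing whether that interval is longer or shorter than $B_{1/2}$; raised to the power $q-4/\beta=4(\gamma-1)/\beta$ this gives $v(0)^{q-4/\beta}\le CE^{\theta_1}M^{\theta_2-1}$.

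For the core estimate I would use $\varphi^{4-\eps}\le\varphi$ together with the factorisation $\varphi^{4-\eps}v^q=(\varphi^{3-\eps}v^{q-4/\beta})(\varphi v^{4/\beta})$ to obtain $\int_{B_1}\varphi^{4-\eps}v^q\dx\le\big(\sup_{B_1}\varphi^{3-\eps}v^{q-4/\beta}\big)\,M$, reducing the problem to $\sup_{B_1}\varphi^{3-\eps}v^{q-4/\beta}\le CE^{\theta_1}M^{\theta_2-1}$. Working on $\{x>0\}$ (the side $\{x<0\}$ being symmetric), $\varphi^{3-\eps}v^{q-4/\beta}$ is absolutely continuous and vanishes at the boundary of $\{\varphi>0\}$ since $3-\eps\ge 1$, so by the fundamental theorem of calculus — integrating by parts in the term where $\varphi'$ falls and using monotonicity of $\varphi$ (whence $\varphi^{2-\eps}|\varphi'|=-\tfrac1{3-\eps}(\varphi^{3-\eps})'$) — one gets
\begin{align*}
\sup_{x>0}\varphi^{3-\eps}v^{q-4/\beta}\;\le\;v(0)^{q-4/\beta}+C\int_{B_1}\varphi^{3-\eps}v^{q-4/\beta-1}|v'|\dx.
\end{align*}
The first summand is controlled by the centre estimate above; to the second I would apply Hölder's inequality to pull out $\big(\int_{B_1}|v'|^4\big)^{1/4}\le E^{1/4}$, leaving $\big(\int_{B_1}\varphi^{\frac43(3-\eps)}v^{\frac43(q-4/\beta-1)}\dx\big)^{3/4}$ — a quantity of exactly the same shape but, because $\gamma<\beta+4$ (equivalently $\alpha>-1$), with a strictly smaller power of $v$. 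Iterating this reduction — each round peeling off one factor $M$, producing a boundary term of the form $v(0)^{(\cdot)}$ and a remainder with the power of $v$ lowered by a definite fraction — terminates after finitely many steps (a number depending only on $n,\alpha$), once the power of $v$ has dropped low enough to bound the remaining weighted integral directly via Hölder against powers of $M$ and $\int_{B_1}v^4\le E$. The slack $0<\eps\ll 1$, chosen depending on $n,\alpha$, is precisely what keeps every cutoff power surviving a differentiation at least $1$, legitimising the integrations by parts and boundary terms; the scaling identities force the boundary contributions to sit at $E$‑exponent $\theta_1$, and $M^{\beta}\le CE$ is used to absorb the remaining lower‑order contributions into $CE^{\theta_1}M^{\theta_2}$.

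Finally, the last assertion is $(1-\vartheta)(n+1-3\alpha)=\theta_2=\tfrac{3\gamma+\beta}{\beta+3}<4$, equivalent to $3\gamma<3\beta+12$, i.e. $\gamma<\beta+4$, i.e. $-4\alpha<4$, i.e. $\alpha>-1$, which holds (and the same computation gives $\vartheta\in(0,1)$). The main obstacle I anticipate is the weighted interpolation step itself: because the cutoff enters with exponent $4-\eps$ on the left but only with exponent $1$ in the mass term (and not at all in the gradient term), this is not a classical Gagliardo–Nirenberg inequality and one cannot simply apply such an inequality to $\varphi^{m}v$. The cutoff has to be propagated through the interpolation by hand, keeping every surviving power $\ge 1$, and the delicate part is arranging the exponent bookkeeping so that the iteration both terminates and closes at the claimed powers — this is exactly where the hypotheses $0<\eps\ll 1$, $|\nabla\varphi|\le C$ and $\alpha>-1$ are used.
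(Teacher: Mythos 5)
Your route is genuinely different from the paper's: the paper applies the standard \emph{unweighted} Gagliardo--Nirenberg inequality on the sub-balls $B_s(x_0)$, $s\in(r/2,r)$, and inserts the cutoff into the mass factor one power at a time by integrating over the level sets $\{\varphi\ge h\}$ (a layer-cake argument iterated $k=\lfloor\lambda\rfloor$ times plus one fractional step using $|\nabla\varphi^{1-\eps}|$), whereas you re-derive a weighted Gagliardo--Nirenberg inequality from scratch via the fundamental theorem of calculus. Several of your ingredients do check out: the centre estimate, the sign argument killing the $\varphi'$-term, the scaling consistency $4\theta_1+\tfrac{4}{\beta}\theta_2=q$, and the equivalence $(1-\vartheta)(n+1-3\alpha)<4\Leftrightarrow\alpha>-1$. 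However, there is a genuine gap at the \emph{termination} of your iteration, and it is not merely unfinished bookkeeping. The lemma is sharp: for a bump of height $h$ and width $w\le 1$ centred at $x_0$ one has $E\sim h^4w^{-3}$, $M\sim h^{4/\beta}w$, and both sides of the claimed inequality equal $h^qw$ (since $\theta_2-3\theta_1=1$), for \emph{every} value of $E/M^\beta\sim w^{-\beta-3}\ge c$. Hence any bound of the form $E^{\theta_1+\delta}M^{\theta_2-\beta\delta}$ with $\delta>0$ exceeds the claimed right-hand side by the unbounded factor $(E/M^\beta)^\delta$ and does not imply the lemma. Your only absorption tool, $M^\beta\le CE$, converts $M$-weight into $E$-weight, so it can absorb terms whose $E$-exponent is \emph{below} $\theta_1$ but never terms whose $E$-exponent is above it. Now trace the exponents: the boundary terms $v(0)^{(\cdot)}$ land exactly at the natural exponents (your scaling remark is correct there), but the terminal term does not. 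Bounding $\int\varphi^{(\cdot)}v^{q_K}\dx$ by plain H\"older between $\int v^4\dx\le E$ and $M$ gives the $E$-exponent $\tfrac{q_K\beta/4-1}{\beta-1}$ (or $0$ when $q_K\le 4/\beta$), which strictly exceeds the natural exponent $\tfrac{q_K\beta/4-1}{\beta+3}$; propagated back up through the $(\cdot)^{3/4}$ powers, this leaves a strictly positive excess $(3/4)^K\bigl(e_K-\tfrac{q_K\beta/4-1}{\beta+3}\bigr)$ in the final $E$-exponent. So the argument as written proves only the strictly weaker inequality with exponents $(\theta_1+\delta,\theta_2-\beta\delta)$ for some $\delta>0$.

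The repair is to avoid interpolating downward to $L^{4/\beta}$ altogether: after the single FTC$+$H\"older step, bound the intermediate integral against the quantity you are estimating, i.e. $\int\varphi^{(\cdot)}v^{\frac43(p_0-1)}\dx\le S^\lambda M$ with $S:=\sup\varphi^{3-\eps}v^{p_0}$ and $p_0\lambda+\tfrac4\beta=\tfrac43(p_0-1)$, and then absorb $E^{1/4}S^{3\lambda/4}M^{3/4}$ into the left-hand side by Young's inequality. One checks $1-\tfrac{3\lambda}{4}=\tfrac{\beta+3}{\beta p_0}>0$ and $\lambda\ge0$ (using $\gamma>3$), that $S<\infty$ a priori since $W^{1,4}(\R)\hookrightarrow L^\infty$, and that the resulting exponent $E^{\frac{\beta p_0}{4(\beta+3)}}M^{\frac{3\beta p_0}{4(\beta+3)}}=E^{\theta_1}M^{\theta_2-1}$ is exactly the natural one — this closes the estimate sharply in one step, with no iteration and no lossy terminal H\"older. (This is essentially how the sharp one-dimensional Gagliardo--Nirenberg inequality itself is proved by FTC.) Alternatively, adopt the paper's level-set argument, which imports the sharp exponent from the unweighted inequality as a black box and only redistributes the mass factors.
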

\begin{proof}
	The Gagliardo-Nirenberg-Sobolev interpolation inequality (applied to
	$v:=u^\frac{\alpha + n + 1}{4}$ with $p=\frac{4(n+1-3\alpha)}{\alpha + n + 1}$,
	$m=4$, $q=\frac{4}{\alpha + n + 1}$) implies, for $ s \in (r/2, r)$,
	\begin{align*}
	\int_{B_s(x_0)} u^{n+1-3\alpha} \dx
	\leq~ &
	C
	\left(\int_{B_r(x_0)}  \left|\nabla u^\frac{\alpha + n + 1}{4}\right|^4
	+r^{-4}u^{\alpha + n + 1}\dx\right)^\frac{\vartheta(n+1-3\alpha)}{\alpha + n + 1}
	\\&\quad \times
	\left(\int_{B_s(x_0)} u \dx\right)^{(1-\vartheta)(n+1-3\alpha)}
	~,
	\end{align*}
	where
	\begin{align*}
	\vartheta = \frac{ \frac{\alpha + n + 1}{4(n+1-3\alpha)} - \frac{\alpha + n + 1}{4} }{\frac{1}{4} - \frac{1}{d} - \frac{\alpha + n + 1}{4}} = \frac{(\alpha + n + 1)(n-3\alpha)}{(n+1-3\alpha)(4+\alpha + n)}.
	\end{align*}
	It is immediate that $0<\vartheta<1$.
	Note also that the constant $C$ does not depend on $s\in (r/2,r)$. 
	
	Fix $S \in \left(r/2, r\right)$; choosing $s(h):=\min\left\{\sup\{|x|:\varphi(x)\geq h\},S\right\}$ and integrating with
	respect to $h$, we infer
	\begin{align*}
	\int_{B_S(x_0)}& \varphi u^{n+1-3\alpha} \dx
	=\int_0^1\int_{B_{s(h)}(x_0)} u^{n+1-3\alpha} \dx~\d h
	\\&
	\leq
	C\left(\int_{B_r(x_0)}  \left|\nabla u^\frac{\alpha + n + 1}{4}\right|^4
	+r^{-4}u^{\alpha + n + 1}\dx\right)^\frac{\vartheta(n+1-3\alpha)}{\alpha + n + 1}
	\\&~~~~~~~~~\quad\times
	\int_0^1\left(\int_{B_{s(h)}(x_0)} u \dx\right)^{(1-\vartheta)(n+1-3\alpha)}~\d h
	\\&
	\leq
	C\left(\int_{B_r(x_0)}  \left|\nabla u^\frac{\alpha + n + 1}{4}\right|^4
	+r^{-4}u^{\alpha + n + 1}\dx\right)^\frac{\vartheta(n+1-3\alpha)}{\alpha + n + 1}
	\\&~~~~~~~~~\quad\times
	\left(\int_{B_S(x_0)} u \dx\right)^{(1-\vartheta)(n+1-3\alpha)-1}
	\int_0^1\int_{B_{s(h)}(x_0)} u \dx~\d h
	\\&
	\leq
	C\left(\int_{B_r(x_0)}  \left|\nabla u^\frac{\alpha + n + 1}{4}\right|^4
	+r^{-4}u^{\alpha + n + 1}\dx\right)^\frac{\vartheta(n+1-3\alpha)}{\alpha + n + 1}
	\\&~~~~~~~~~\quad \times
	\left(\int_{B_S(x_0)} u \dx\right)^{(1-\vartheta)(n+1-3\alpha)-1}
	\int_{B_r(x_0)} \varphi u \dx
	~.
	\end{align*}
	Repeating this procedure, we get for $k=2$ and subsequently $k=3$ (as long as $(1-\vartheta)(n+1-3\alpha)>3$)
	\begin{align*}\int_{B_S(x_0)}& \varphi^k u^{n+1-3\alpha} \dx =\int_0^1\int_{B_{s(h)}(x_0)} \varphi^{k-1}u^{n+1-3\alpha} \dx\,\d h 	
	\\&\le \int_0^1 C\left(\int_{B_r(x_0)}  \left|\nabla u^\frac{\alpha + n + 1}{4}\right|^4
	+r^{-4}u^{\alpha + n + 1}\dx\right)^\frac{\vartheta(n+1-3\alpha)}{\alpha + n + 1} 
	\\&~~~~~~  \times
	\left(\int_{B_{s(h)}(x_0)} u \dx\right)^{(1-\vartheta)(n+1-3\alpha)-(k-1)}
	\left(\int_{B_r(x_0)} \varphi u \dx\right)^{k-1} \,\d h 
	\\&
	\leq
	C\left(\int_{B_r(x_0)}  \left|\nabla u^\frac{\alpha + n + 1}{4}\right|^4
	+r^{-4}u^{\alpha + n + 1}\dx\right)^\frac{\vartheta(n+1-3\alpha)}{\alpha + n + 1}
	\\&~~~~~~ \times
	\left(\int_{B_S(x_0)} u \dx\right)^{(1-\vartheta)(n+1-3\alpha)-k}
	\left(\int_{B_r(x_0)} \varphi u \dx\right)^k
	~.
	\end{align*}
	Set  $\lambda:=(1-\vartheta)(n+1-3\alpha)$ and $k = \lfloor \lambda \rfloor$; note that we have $1<\lambda<4$ and therefore $k\in \{1,2,3\}$. By making use of the estimates above, we get  
	\begin{align*}
	&\int_{B_r(x_0)} \varphi^{(k+1)-\eps} u^{n+1-3\alpha} \dx
	=\int_0^r |\nabla \varphi^{1-\eps}(s)| \int_{B_s(x_0)} \varphi^k
	u^{n+1-3\alpha} \dx\ds
	\\&~~~
	\leq
	C\left(\int_{B_r(x_0)}  \left|\nabla u^\frac{\alpha + n + 1}{4}\right|^4
	+r^{-4}u^{\alpha + n + 1}\dx\right)^\frac{\vartheta(n+1-3\alpha)}{\alpha + n + 1}
	\\&~~~~~~\quad\times
	\left(\int_0^r \left|\nabla \varphi^{1-\eps}(s)\right|
	\left(\int_{B_s(x_0)} u \dx\right)^{(1-\vartheta)(n+1-3\alpha)-k}\ds\right)
	\left(\int_{B_r(x_0)} \varphi u \dx\right)^k
	\\&~~~
	\leq
	C\left(\int_{B_r(x_0)}  \left|\nabla u^\frac{\alpha + n + 1}{4}\right|^4
	+r^{-4}u^{\alpha + n + 1}\dx\right)^\frac{\vartheta(n+1-3\alpha)}{\alpha + n + 1}
	\left(\int_{B_r(x_0)} \varphi u \dx\right)^k
	\\&~~~~~~\quad \times
	\int_0^r \left|\nabla
	\varphi^\frac{1-\lambda + k-\eps}{1-\lambda+k}\right|^{1-\lambda + k}
	|\nabla \varphi|^{\lambda-k}
	\left(\int_{B_s(x_0)} u \dx\right)^{\lambda-k}\ds
	\\&~~~
	\leq
	C\left(\int_{B_r(x_0)}  \left|\nabla u^\frac{\alpha + n + 1}{4}\right|^4
	+r^{-4}u^{\alpha + n + 1}\dx\right)^\frac{\vartheta(n+1-3\alpha)}{\alpha + n + 1}
	\\&~~~~~~\quad \times
	\left(\int_{B_r(x_0)} \varphi u
	\dx\right)^{k}
	\left(\int_0^r |\nabla \varphi(s)| \int_{B_s(x_0)} \varphi u
	\dx\ds\right)^{\lambda - k}
	\\&~~~~~~\quad \times
	\left(\int_0^r  \left|\nabla \varphi^\frac{1-\lambda + k-\eps}{1-\lambda + k}\right| \ds\right)^{1-\lambda+k}
	\end{align*}
	(where we used H\"{o}lder's inequality in the last step). This proves the lemma
since $\int_{-\infty}^\infty |\nabla \varphi^\frac{1-\lambda+k-\eps}{1-\lambda+k}| \ds=2.$
\end{proof}

\section{Proof of the sufficient condition for the waiting time phenomenon}
\label{SectionProofSufficient}

We now turn to the proof of the optimal lower bounds on waiting times.
We split the proof in two cases: In the regime of strong slippage, i.\,e. $n \in (1,2)$, the ``propagation of degeneracy argument''  is based on the interplay between a localized mass estimate and a time-weighted localized entropy estimate; on the other hand, in the regime of weak slippage, i.\,e. $n \in [2,3)$, we employ a localized mass estimate and a time-weighted localized energy estimate. 
{
As the case $n\in (1,2)$ is somewhat more technical -- due to the additional variable $\alpha$ -- , we begin with the case $n\in [2,3)$.
}

\subsection{The case of weak slippage $n\in [2,3)$.}

\vspace{2mm}
\begin{proof}[Proof of Theorem 	\ref{th:masssuf}, case $n \in [2,3)$]
In case $x_0\in \partial \supp u_0$, denote by $\mathcal{C}$ a cone with the same apex and orientation as the cone from the exterior cone condition but with half the opening angle.
Our main assumption \eqref{ConditionWaitingTime} entails the existence of some $\rho>0$ such that for any point $\tilde x_0\in B_\rho(x_0)$ (if $x_0\notin \supp u_0$) respectively for any point $\tilde x_0\in B_\rho(x_0)\cap \mathcal{C}$ (if $x_0\in \partial \supp u_0$) the estimate
\begin{align*}
\averageint_{B_r(\tilde x_0)} u_0 \dx \le C(d,n,\lambda) \kappa\, r^{\frac{4}{n}}
\end{align*}
holds for all $r>0$. In other words, for all points $\tilde x_0$ near $x_0$ respectively all points $\tilde x_0$ near $x_0$ in the smaller cone $\mathcal{C}$, the initial data $u_0$ satisfy a growth condition analogous to \eqref{ConditionWaitingTime}, just with a different constant $\kappa$.
	
We will prove that the assumption \eqref{ConditionWaitingTime} implies that for $T:=c \kappa^{-n}$ and for $R>0$ large enough the estimate
\begin{align}
\label{DecayWeak}
\averageint_0^{T}\averageint_{B_{\frac{R}{2^k}}(x_0)} u \dx \dt \le \sup_{t \in (0,T)}\averageint_{B_{\frac{R}{2^k}}(x_0)} u \dx \dt \leq C  T^{-1/n}   \left(\frac{R}{2^k}\right)^{4/n}
\end{align}
holds for any $k \in \N$.
In view of the previous discussion, the same implication (up to adjusting the constants) then holds for all points $\tilde x_0$ in a neighborhood of $x_0$, respectively for all $\tilde x_0$ near $x_0$ which belong to the cone $\mathcal{C}$.
Letting $k\rightarrow\infty$, this implies $u(\tilde x_0,t) = 0$ for almost all such points $\tilde x_0$ and almost every $t\in (0,T)$. Thus, in view of Definition~\ref{DefinitionWaitingTime} this implies the lower bound $T^*\geq c \kappa^{-n}$ on the waiting time $T^*$ at $x_0$.

\vspace{2mm}
\noindent \emph{Step 1.}
{Fix $R>0$ and abbreviate $r_k:= R/2^k$ for any integer $k \ge 1$.
In order to prove \eqref{DecayWeak}, we will in fact derive corresponding estimates on the local mass
\begin{align}
\label{Mass}
	M(k) &:= \sup_{t \in [0,T]}\int_{B_{r_k}(x_0)} u \dx.
\end{align}
Our assumption \eqref{ConditionWaitingTime} implies a suitable estimate on the initial mass in the ball $B_{r_k}(x_0)$, i.\,e.\ a suitable bound for $M(k)$ in the case $T=0$.
Unfortunately, due to the larger exponent $u^{n+1}$ and the derivatives appearing on the right-hand side of the thin-film equation \eqref{ThinFilmEquation}, it is not possible to use the PDE \eqref{ThinFilmEquation} (by testing with a smooth cutoff, see below) to directly estimate the influx of mass into a smaller ball directly in terms of the mass on a larger ball.
While a localized energy dissipation estimate would be sufficient to control the influx of mass -- as well as the influx of energy -- , our only assumption \eqref{ConditionWaitingTime} does not provide any control of the initial local energy $\int_{B_{r_k}} |\nabla u_0|^2 \dx$. We therefore use the regularizing effect of the thin-film equation, which allows us to derive a time-dependent upper bound on the energy $\int_{B_{r_k}} |\nabla u_0|^2 \dx$:
We consider the energy (and energy dissipation) with a time-dependent increasing weight $t^\beta$, leading to the definition
\begin{align}
\label{Energy}
	E(k) &:= \sup_{t \in (0,T)} \int_{B_{r_k}(x_0)} t^\beta|\nabla u|^2 \dx
\\&\qquad
\nonumber
+ \int_0^T \int_{B_{r_k}(x_0)} t^\beta \Big(\Big|\nabla u^\frac{n+2}{6}\Big|^6 + u^n |\nabla \Delta u|^2 \Big) \dx \dt. 
\end{align}
We will see that the time evolution of the weighted energy $\int_{B_{r_k}(x_0)} t^\beta|\nabla u|^2 \dx$ can in fact be controlled: An increase in weighted energy can only stem from an influx of energy or from the time derivative of the weight $t^\beta$. We will show that by making use of the dissipation term, the latter term can be controlled in terms of the mass $M(k-1)$. However, due to the weight $t^\beta$ the influx of weighted energy into the ball $B_{r_k}$ can no longer be controlled by the weighted energy (and dissipation) alone; instead, we need to use an interpolation argument between the weighted energy (and dissipation) and the mass $M(k-1)$, see Step~2 below.
}
	
In order to control the influx of mass and energy into the balls $B_{r_k}$, we will make use of the family of smooth cutoff functions $\varphi_{r_k} \in C^\infty_c(\mathbb{R}^d)$ with the properties $\supp(\varphi_{r_k}) \subset B_{r_k}(x_0)$, $0 \le \varphi_{r_k} \le 1$, \begin{align*}\varphi_{r_k}(x) = \begin{cases}
	1, & x \in B_{r_{k+1}}(x_0), \\
	0, & x \in \R^d \setminus B_{r_k}(x_0),
	\end{cases}\end{align*} and $\displaystyle |\nabla\varphi_{r_k}|\leq
	C\left(r_{k+1}\right)^{-1}$, $\displaystyle |D^2\varphi_{r_k}|\leq C\left(r_{k+1}\right)^{-2}$, $\displaystyle|D^3\varphi_{r_k}|\leq C\left(r_{k+1}\right)^{-3}$, $\displaystyle|D^4\varphi_{r_k}|\leq
	C\left(r_{k+1}\right)^{-4}$. 
	
	\vspace{2mm}
	\noindent
	\emph{Step 2. Time-weighted localized energy estimate.}
	{We now derive an estimate for the weighted energy and dissipation $E(k+1)$ as defined in \eqref{Energy} in terms of the mass $M(k)$ and the weighted energy/dissipation $E(k)$ on the larger ball $B_{r_k}(x_0)$.}
	Let $T>0$ and $\varphi\in C^\infty_c(\mathbb{R}^d)$ be a nonnegative smooth cut-off function. In the appendix (Theorem \ref{WeightedEnergy}), we prove for any energy-dissipating weak solution of the thin-film equation \eqref{ThinFilmEquation} with zero contact angle in the sense of Definition~\ref{DefinitionEnergyDissipatingWeakSolution} the time-weighted localized energy estimate
	\begin{equation}\label{TWeightedEnergy}
	\begin{split}
	&\int_{\mathbb{R}^d} t^\beta |\nabla u|^2 \varphi^6 \dx\bigg|_{0}^{T}
	+C\int_{0}^{T}
	\int_{\mathbb{R}^d} t^\beta \Big(\Big|\nabla u^\frac{n+2}{6}\Big|^6 + u^n |\nabla \Delta u|^2 \Big) \varphi^6 \dx\dt
	\\&~~~
	\leq
	C\int_{0}^{T}\int_{\mathbb{R}^d} t^\beta u^{n+2} \left(|\nabla \varphi|^6 + |D^2\varphi|^2 |\nabla \varphi|^2 \varphi^2 + |D^2\varphi|^3 \varphi^3\right) \dx\dt
	\\&\qquad
	+\beta \int_{0}^{T} t^{-1} \int_{\mathbb{R}^d}
	t^\beta |\nabla u|^2 \varphi^6 \dx \dt
	\end{split}
	\end{equation}
	for any $\beta \in (0,1)$.
	H\"{o}lder's inequality yields
	\begin{align*}
	\int_{\mathbb{R}^d} |\nabla u|^2 \varphi^6 \dx
	\leq
	C\left(\int_{\mathbb{R}^d} |\nabla u^\frac{n+2}{6}|^6 \varphi^6 \dx
	\right)^\frac{1}{3}
	\left(\int_{\mathbb{R}^d} u \varphi^6 \dx\right)^\frac{4-n}{3}
	\left(\int_{\mathbb{R}^d} \varphi^6 \dx\right)^\frac{n-2}{3}
	~.
	\end{align*}
	Hence we have
	\begin{align*}
&\int_{\mathbb{R}^d} t^\beta |\nabla u|^2 \varphi^6 \dx\bigg|_{0}^{T}
+C\int_{0}^{T} 
\int_{\mathbb{R}^d} t^\beta \Big(\Big|\nabla u^\frac{n+2}{6}\Big|^6 + u^n |\nabla \Delta u|^2 \Big) \varphi^6 \dx\dt
\\&\leq
C\int_{0}^{T}\int_{\mathbb{R}^d} t^\beta u^{n+2} \left(|\nabla \varphi|^6 + |D^2\varphi|^2 |\nabla \varphi|^2 \varphi^2 + |D^2\varphi|^3 \varphi^3\right) \dx\dt
\\& \quad
+C\int_{0}^{T} t^{\beta - 1}
\left(\int_{\mathbb{R}^d} \varphi^6 \dx\right)^\frac{n-2}{3}
\left(\int_{\mathbb{R}^d} u \varphi^6 \dx\right)^\frac{4-n}{3}
\left(\int_{\mathbb{R}^d} \left|\nabla u^{\frac{n+2}{6}}\right|^6 \varphi^6 \dx\right)^{\frac{1}{3}}
\dt.
\end{align*}
By applying H\"{o}lder's inequality again, we obtain (assuming $\beta >1/2$)
	\begin{align*}
	&\int_{\mathbb{R}^d} t^\beta |\nabla u|^2 \varphi^6 \dx\bigg|_{0}^{T}
	+C\int_{0}^{T} 
	\int_{\mathbb{R}^d} t^\beta \Big(\Big|\nabla u^\frac{n+2}{6}\Big|^6 + u^n |\nabla \Delta u|^2 \Big) \varphi^6 \dx\dt
	\\ &\le C\int_{0}^{T}\int_{\mathbb{R}^d} t^\beta u^{n+2} \left(|\nabla \varphi|^6 + |D^2\varphi|^2 |\nabla \varphi|^2 \varphi^2 + |D^2\varphi|^3 \varphi^3\right) \dx\dt
	\\& \qquad + C\left(\int_0^T \int_{\R^d}   t^\beta |\nabla u^\frac{n+2}{6}|^6 \varphi^6 \dx \right)^{\frac{1}{3}} \\& \qquad \qquad \times \left(\int_{0}^{T} t^{\beta - \frac{3}{2}}  \left(\int_{\mathbb{R}^d} \varphi^6 \dx\right)^\frac{n-2}{2}
	\left(\int_{\mathbb{R}^d} u \varphi^6 \dx\right)^\frac{4-n}{2}
	\dt \right)^{\frac{2}{3}}
	~.
	\end{align*}
From Young's inequality (applied with suitably chosen constants), it follows that 
	\begin{align*}
&\int_{\mathbb{R}^d} t^\beta |\nabla u|^2 \varphi^6 \dx\bigg|_{0}^{T}
+C\int_{0}^{T} 
\int_{\mathbb{R}^d} t^\beta \Big(\Big|\nabla u^\frac{n+2}{6}\Big|^6 + u^n |\nabla \Delta u|^2 \Big) \varphi^6 \dx\dt
\\ &\le C\int_{0}^{T}\int_{\mathbb{R}^d} t^\beta u^{n+2} \left(|\nabla \varphi|^6 + |D^2\varphi|^2 |\nabla \varphi|^2 \varphi^2 + |D^2\varphi|^3 \varphi^3\right) \dx\dt
\\& \qquad +  C\int_{0}^{T} t^{\beta - \frac{3}{2}}  \left(\int_{\mathbb{R}^d} \varphi^6 \dx\right)^\frac{n-2}{2}
\left(\int_{\mathbb{R}^d} u \varphi^6 \dx\right)^\frac{4-n}{2}
\dt 
~.
\end{align*}
	Choosing $\varphi = \varphi_{r_k}$, the previous inequality reduces to 
	\begin{align*}
	\sup_{t\in (0,T)}&
	\int_{B_{r_{k+1}}(x_0)} t^\beta |\nabla u|^2   \dx
	+C\int_0^T \int_{B_{r_{k+1}}(x_0)}
	t^\beta \Big(\Big|\nabla u^\frac{n+2}{6}\Big|^6 + u^n |\nabla \Delta u|^2 \Big)  \dx\dt
	\\&
	\leq
	C\left(\frac{R}{2^{k+1}}\right)^{-6} \int_0^T\int_{B_{r_{k}}(x_0) } t^\beta u^{n+2} 	\dx\dt
	\\& \qquad +  C\left(\frac{R}{2^{k+1}}\right)^{d\frac{n-2}{2}} \int_{0}^{T} t^{\beta - \frac{3}{2}}	\left(\int_{B_{r_{k}}(x_0) } u  \dx\right)^\frac{4-n}{2}
	\dt~.
	\end{align*}
	The Gagliardo-Nirenberg-Sobolev  interpolation inequality (applied to $v=u^{\frac{n+2}{6}}$, $p = 6$, $q=\frac{6}{n+2}$, $r=6$) yields
	\begin{align*}
	\int_{B_{r_k}(x_0)} u^{n+2} \dx
	\leq&
	C\left(\int_{B_{r_k}(x_0)} |\nabla u^\frac{n+2}{6}|^6 \dx\right)^{\mu} 
	\left(\int_{B_{r_k}(x_0)} u \dx\right)^{(1-\mu)(n+2)}
	\\ &+C\left(\frac{R}{2^{k+1}}\right)^{-d(n+1)}\left(\int_{B_{r_k}(x_0)} u \dx\right)^{n+2}
	\end{align*}
	with
	\begin{align*}
	\mu=\frac{d(n+1)}{dn+d+6}
	~.
	\end{align*}
	We thus obtain using also the definition \eqref{Energy}
	\begin{align*}
	&
	{E(k+1)}
	\\&
	\leq
	C\left(\frac{R}{2^{k+1}}\right)^{-6}\int_0^T t^{(1-\mu)\beta}
	\left(\int_{{B_{r_k}(x_0)}} t^\beta|\nabla u^\frac{n+2}{6}|^6 \dx\right)^\mu
	\left(\int_{{B_{r_k}(x_0)}} u \dx\right)^{(1-\mu)(n+2)}
	\dt
	\\&\qquad
	+C\left(\frac{R}{2^{k+1}}\right)^{-6}  \left(\frac{R}{2^{k+1}}\right)^{-d(n+1)}\int_0^T t^{\beta}
	\left(\int_{{B_{r_k}(x_0)}} u \dx\right)^{n+2}
	\dt
	\\& \qquad +C \left(\frac{R}{2^{k+1}}\right)^{d\frac{n-2}{2}} \int_{0}^{T} t^{\beta - \frac{3}{2}}  
	\left(\int_{B_{r_k}(x_0)} u  \dx\right)^\frac{4-n}{2}
	\dt~.
	\end{align*}
	This implies by H\"{o}lder's inequality{, using also the assumption $\beta>\frac{1}{2}$ and the definitions \eqref{Energy} and \eqref{Mass} }
		\begin{align*}
	\\
	{E(k+1)}
	\leq &
	C\left(\frac{R}{2^{k+1}}\right)^{-6} T^{1-\mu + \beta(1-\mu)} 
	E(k)^\mu 
	M(k)^{(n+2)(1-\mu)}
	\\&\quad
	+C\left(\frac{R}{2^{k+1}}\right)^{-6-d(n+1)} T^{\beta +1}
	M(k)^{n+2}
	\\& \quad +C \left(\frac{R}{2^{k+1}}\right)^{d\frac{n-2}{2}} T^{\beta - \frac{1}{2}} 
	M(k)^\frac{4-n}{2}
	\end{align*}
	i.\,e. (inserting the expression for $\mu$)
	\begin{align}
	\label{LocalizedEnergyWeak}
	{E(k+1)}
	\leq &
	C\left(\frac{R}{2^{k+1}}\right)^{-6} T^{1-\mu + \beta(1-\mu)} 
	 E(k)^{\frac{nd+d}{nd+d+6}}
	M(k)^{\frac{6(n+2)}{nd+d+6}}
	\\&\quad
	\nonumber
	+C\left(\frac{R}{2^{k+1}}\right)^{-6-d(n+1)} T^{\beta +1}
	 M(k)^{n+2}
	\\& \quad
	+C \left(\frac{R}{2^{k+1}}\right)^{d\frac{n-2}{2}} T^{\beta - \frac{1}{2}} 
	\nonumber
	 M(k)^\frac{4-n}{2}
	\\ =:&E_1+E_2+E_3.
	\nonumber
	\end{align}

	\vspace{2mm}\noindent
	\emph{Step 3. Weighted mass estimate.} {We now derive an estimate for the average mass as defined in \eqref{Mass}.} We have by choosing $\psi = \varphi_{r_k}$ in Definition~\ref{DefinitionEnergyDissipatingWeakSolution}d
	\begin{align*}
	\sup_{t \in (0,T)}&\int_{B_{r_{k+1}}(x_0)} u \dx
	\\
	\leq&
	\int_{B_{r_k}(x_0)} u_0 \dx
	+C\left(\frac{R}{2^{k+1}}\right)^{-1}\int_0^T\int_{B_{r_k}(x_0)}
	u^\frac{n}{2}|\nabla \Delta u| u^{\frac{n}{2}}
	\dx\dt.
	\end{align*}
	The Gagliardo-Nirenberg-Sobolev interpolation inequality \eqref{GNSInterpolation} (applied to $v=u^{\frac{n+2}{6}}$, $p = \frac{6n}{n+2}$, $q=\frac{6}{n+2}$, $r=6$) yields
	\begin{align*}
	\bigg(\int_{B_{r_k}(x_0)} u^n \dx\bigg)^{\frac{1}{2}}
	\leq& C\left(\int_{B_{r_k}(x_0)} |\nabla u^\frac{n+2}{6}|^6\dx\right)^\frac{\vartheta n }{2(n+2)}
	\left(\int_{B_{r_k}(x_0)} u \dx\right)^{\frac{(1-\vartheta)n}{2}}
	\\ &+C\left(\frac{R}{2^{k+1}}\right)^{-\frac{d(n-1)}{2}}\left(\int_{B_{r_k}(x_0)} u \dx\right)^\frac{n}{2},
	\end{align*}
	with
	\begin{align*}
	\vartheta=\frac{d(n+2)(n-1)}{n(dn+d+6)}
	~.
	\end{align*}
	Putting these two estimates together, we deduce
	\begin{align*}
	&M(k+1)=\sup_{t \in (0,T)}\int_{B_{r_{k+1}}(x_0)}  u \dx
	\\&\leq
	\int_{B_{r_k}(x_0)}   u_0  \dx
	\\&~~~
	+C\left(\frac{R}{2^{k+1}}\right)^{-1}\int_0^T
	\left(\int_{B_{r_k}(x_0)}  u \dx\right)^\frac{n(1-\vartheta)}{2}\left(\int_{B_{r_k}(x_0)}  |\nabla u^\frac{n+2}{6}|^6
	\dx\right)^{\frac{\vartheta n}{2(n+2)}}
	\\&\qquad\qquad\qquad\qquad\qquad\qquad
	\times \bigg(\int_{B_{r_k}(x_0)} u^n |\nabla \Delta u|^2 \dx\bigg)^\frac{1}{2} \dt
	\\&~~~
	+C\left(\frac{R}{2^{k+1}}\right)^{-1-\frac{d(n-1)}{2}} \int_0^T \Bigg(\int_{B_{r_k}(x_0)}  u \dx \Bigg)^\frac{n}{2} \Bigg(\int_{B_{r_k}(x_0)} u^n |\nabla \Delta u|^2 \dx \Bigg)^\frac{1}{2}\dt.
	\end{align*}
	Choosing $\beta<\frac{3+d}{3+nd}$ (note that this is possible in view of the only other condition $\beta>\frac{1}{2}$), this implies by H\"{o}lder's inequality and the formula for $\vartheta$ as well as the definitions \eqref{Mass} and \eqref{Energy}
	\begin{align}
	\label{LocalizedMassWeak}	
	M(k+1)
	&\leq
	\int_{B_{r_k}(x_0)}   u_0  \dx 
	\\&~~~ 
	\nonumber
	+C\left(\frac{R}{2^{k+1}}\right)^{-1} T^{\frac{3+d-\beta dn -3\beta}{dn+d+6}}
	 M(k)^\frac{3n + d}{dn+d+6}
	E(k)^{\frac{dn+3}{dn+d+6}}
	\\&~~~
	\nonumber
		+C\left(\frac{R}{2^{k+1}}\right)^{-1-\frac{d(n-1)}{2}}T^{\frac{1-\beta}{2}}
	 M(k)^\frac{n}{2}
	E(k)^\frac{1}{2}
	\\ &=: M_1+M_2+M_3.
	\nonumber
	\end{align}
	
	\vspace{2mm}
	\emph{Step 4. Down-propagation of the degeneracy.}
	{The estimates \eqref{LocalizedEnergyWeak} and \eqref{LocalizedMassWeak} together form a Stampacchia-type inequality, however in contrast to the standard single-variable case now for two variables: \eqref{LocalizedEnergyWeak} and \eqref{LocalizedMassWeak} estimate $E(k+1)$ and $M(k+1)$ in terms of a higher power of $E(k)$ and $M(k)$, with prefactors that blow up for $k\rightarrow \infty$ in a controlled-exponential way. As the single-variable Stampacchia lemma is not applicable directly to our two-variable setting, we perform the Stampacchia iteration explicitly:}
	We want to prove {by induction} that for $R>0$ chosen large enough and for $T:=c\kappa^{-n}$, for every $k \in \N$ the bounds
	\begin{subequations}
	\label{Goal2Weak} 
	\begin{align}
	&
	M(k)
	\le \eps~ T^{-1/n}   \left(\frac{R}{2^k} \right)^{4/n+d}, \\
	&
	E(k)
	\le \eps^\delta~ T^\beta ~ T^{-2/n} \left(\frac{R}{2^k} \right)^{8/n -2+d}\end{align}
	\end{subequations}
	hold. Here, $\eps, \delta>0$ are suitable constants that will be chosen below. The parameter $\beta>0$ is arbitrary within the bounds mentioned above. Note that the estimate \eqref{Goal2Weak} will immediately imply our desired estimate \eqref{DecayWeak}. {Observe furthermore that the scaling of the estimates \eqref{Goal2Weak} is chosen precisely such that they reflect whether or not the function $u$ is ``degenerate'' on the parabolic cylinder $B_{r_k}(x_0)\times [0,T)$ (in the sense of having an average mass density of $r_k^{4/n}$ and a corresponding Dirichlet energy density $r_k^{8/n-2}$).}

	{To start the induction, it is easy to check that the estimate \eqref{Goal2Weak} holds for $k=1$} provided that we fix $R>0$ large enough. Indeed,
	\begin{align*}
	{ M(1)=}
	\sup_{t \in (0,T)}\int_{B_{r_1}(x_0)} u \dx \le \eps T^{-1/n}   \left(\frac{R}{2}\right)^{\frac{4}{n} + d}\end{align*} follows from the fact that 
	\begin{align}
	\label{BoundOverallMass}
	\int_{\R^d} u \dx = \int_{\R^d} u_0 \dx < \infty.
	\end{align}
	On the other hand, to prove 
	\begin{align*}
	&
	{ E(1)=}
	\sup_{t \in (0,T)} \int_{B_{r_1}(x_0)} t^\beta|\nabla u|^2 \dx + \int_0^T \int_{B_{r_1}(x_0)} t^\beta \Big(\Big|\nabla u^\frac{n+2}{6}\Big|^6 + u^n |\nabla \Delta u|^2 \Big) \dx \dt \\ &\ \qquad \le \eps~ T^\beta~ T^{-2/n}  \left(\frac{R}{2}\right)^{\frac{8}{n} - 2 + d}
	\end{align*}
	we argue as follows. By the property of finite speed of propagation for the solutions to the thin-film equation (see \cite[Theorem 1.3]{GruenOptimalRatePropagation}), there exists a ball $B_{\bar R}(x_0)$ that contains $\supp u(\cdot, t)$ for $t \in [0,T)$. We consider a smooth cut-off function $\varphi$ such that $\supp \varphi \subset B_{R}(x_0)$, $0 \le \varphi \le 1$, and $\varphi \equiv 1$ in $B_{\bar R}(x_0)$. Then, from the weighted energy estimate it follows that 
	\begin{align*}
	{E(1)=}\sup_{t\in (0,T)}&
	\int_{B_{r_1}(x_0)} t^\beta |\nabla u|^2 \dx
	+C\int_0^T \int_{B_{r_1}(x_0)}
	t^\beta \Big(\Big|\nabla u^\frac{n+2}{6}\Big|^6 + u^n |\nabla \Delta u|^2 \Big)  \dx\dt
	\\&\le C \int_{0}^{T} t^{\beta - \frac{3}{2}}  \left(\int_{B_{ \bar R}(x_0)} \varphi  \dx\right)^\frac{n-2}{2}
	\left(\int_{B_{ \bar R}(x_0)} u   \dx\right)^\frac{4-n}{2}
	\dt~
	\\&\le C~T^{\beta - \frac{1}{2}} \bar{R}^{\, d\frac{n-2}{2}} \Vert u_0 \Vert_{L^1(\R^d)}^{\frac{4-n}{2}}~.
	\end{align*}
	In view of \eqref{BoundOverallMass}, this implies the claim if we choose $R\geq\bar R$ large enough.

Having proved the base step of the induction, we now show that the bounds are propagated down to smaller scales: Assuming 
\begin{subequations}
\begin{align} \label{hype1}
&
{M(k)}
\le  \eps~T^{-1/n}   \left(\frac{R}{2^{k}}\right)^{\frac{4}{n}+d},
\\ \label{hype2}
&
{E(k)}
\le \eps^\delta~ T^\beta~T^{-2/n}    \left(\frac{R}{2^{k}}\right)^{\frac{8}{n} - 2+d},
\end{align}
\end{subequations}
we want to show that
\begin{subequations}
\begin{align}\label{mass22}
&
{M(k+1)}
\le  \eps~T^{-1/n}    \left(\frac{R}{2^{k+1}}\right)^{\frac{4}{n}+d},
\\ \label{energy22}
&
{E(k+1)}
\le \eps^\delta~ T^\beta~T^{-2/n}    \left(\frac{R}{2^{k+1}}\right)^{\frac{8}{n} - 2+d}.
\end{align}
\end{subequations}
\smallskip

Plugging the induction hypothesis \eqref{hype1}-\eqref{hype2} as well as the assumption \eqref{ConditionWaitingTime} into the localized energy and mass estimates \eqref{LocalizedEnergyWeak} and \eqref{LocalizedMassWeak}, we obtain 
	\begin{align*}
	&E_1 \le \dotuline{C~\eps^{\frac{6 (n+2 -\delta)}{dn + d+6}}}~\eps^{\delta}~T^{\beta}~T^{-\frac{2}{n}}\left(\frac{R}{2^{k+1}}\right)^{\frac{8}{n}-2+d} \\
	&E_2 \le \dotuline{C~\eps^{n+2-\delta}}~\eps^{\delta}~T^{\beta}~T^{-\frac{2}{n}}\left(\frac{R}{2^{k+1}}\right)^{\frac{8}{n}-2+d}   \\
	&E_3 \le \dotuline{C~\eps^{\frac{4-n-2\delta}{2}}}~\eps^{\delta}~T^{\beta}~T^{-\frac{2}{n}}\left(\frac{R}{2^{k+1}}\right)^{\frac{8}{n}-2+d}   \\
	\\
	&M_1 \le {C~\eps^{-1}\kappa~ T^{\frac{1}{n}}}~\eps~T^{-\frac{1}{n}}  \left(\frac{R}{2^{k+1}}\right)^{\frac{4}{n} +d} \\
	&M_2 \le \dotuline{C~\eps^{\frac{3n-dn-6+\delta dn + 3\delta}{dn+d+6}}}~\eps~T^{-\frac{1}{n}}  \left(\frac{R}{2^{k+1}}\right)^{\frac{4}{n} +d}\\
	&M_3 \le \dotuline{C~\eps^{\frac{n -2 +\delta }{2}}}~\eps~T^{-\frac{1}{n}}  \left(\frac{R}{2^{k+1}}\right)^{\frac{4}{n} +d}.
	\end{align*}
	Putting these estimates together, we conclude that 
	\eqref{mass22} and \eqref{energy22} hold if $\eps$ and  $\delta$ are chosen in a suitable way (i.\,e. $\eps$ small enough and $\delta$ in such a way that the exponents in the \dotuline{underlined factors} are positive, in particular $\delta<2-\frac{n}{2}$ but $\delta>(dn+6-3n)/(dn+3)$) and if we suppose that $T$ satisfies
		\begin{align*}
		C~\eps^{-1}~ T^{1/n}~ \kappa \le 1.
		\end{align*}
As a consequence, for such $T$ the estimates \eqref{Goal2Weak} hold.
\end{proof}

\subsection{The case of strong slippage $n\in (1,2)$.}

{
The proof in the regime $1<n<2$ is mostly analogous; however, the role of the weighted local energy is now played by a weighted local entropy, as Bernis-type inequalities are not known to hold for $d\geq 2$ and $n\approx 1$. Due to the additional variable exponent $1+\alpha$ in the entropy, the estimates are slightly more lengthy and technical.
}

\vspace{2mm}
\begin{proof}[Proof of Theorem 	\ref{th:masssuf}, case $n \in (1,2)$]
	We will prove the following statement: The assumption \eqref{ConditionWaitingTime} implies that for $T:=c\kappa^{-n}$ and for $R>0$ large enough the estimate   
	\begin{align}
	\label{GoalStrong}
	 \averageint_0^{T}\averageint_{B_{\frac{R}{2^k}}(x_0)} u \dx \dt \le \sup_{t \in (0,T)}\averageint_{B_{\frac{R}{2^k}}(x_0)} u \dx \leq C T^{-1/n}   \left(\frac{R}{2^k}\right)^{4/n}
	\end{align}
	holds for all $k \in \N$.
	To see that this implication entails our lower bound on waiting times, we refer to the discussion of the same issue in the case $n \in [2,3)$ provided at the beginning of the proof of Theorem~\ref{th:masssuf} in the case $n\in [2,3)$.
	
	\vspace{2mm}
	\noindent\emph{Step 1. Choice of test functions.} Fix $R>0$ and let $r_k:= 2^{-k} R$ for any $k \ge 1$. {Proceeding similarly to the case of weak slippage $n\in [2,3)$, we will prove estimates on the local mass
	\begin{align}
	\label{Mass2}
	M(k) &:= \sup_{t \in (0,T)}\int_{B_{r_k}(x_0)} u \dx
	\end{align}
	and the local weighted entropy and dissipation
	\begin{align}
	\label{Entropy}
	S(k) &:= \sup_{t \in (0,T)} \int_{B_{r_k}(x_0)} t^\beta u^{\alpha + 1} \dx + \int_0^T \int_{B_{r_k}(x_0)} t^\beta \left|\nabla u^{\frac{n+\alpha + 4}{4}}\right|^4 \dx \dt. 
	\end{align}
	}
	We will again make use of the family of cutoff functions $\varphi_{r_k} \in C^\infty_c(\mathbb{R}^d)$
	with $\supp(\varphi_{r_k}) \subset B_{r_k}(x_0)$, $0 \le \varphi_{r_k} \le 1$, \begin{align*}\varphi_{r_k}(x) = \begin{cases}
	1, &  x \in B_{r_{k+1}}(x_0), \\
	0, &  x \in \R^d \setminus B_{r_k}(x_0),
	\end{cases}\end{align*} and $\displaystyle |\nabla\varphi_{r_k}|\leq
	C(r_{k+1})^{-1}$, $\displaystyle |D^2\varphi_{r_k}|\leq C(r_{k+1})^{-2}$, $\displaystyle|D^3\varphi_{r_k}|\leq C(r_{k+1})^{-3}$, $\displaystyle|D^4\varphi_{r_k}|\leq
	C(r_{k+1})^{-4}$. 
	
	\vspace{2mm}
	\noindent\emph{Step 2. Time-weighted localized entropy estimate.} Let  $T>0$ and $\varphi \in C^\infty(\R^d \times [0,T])$ be a smooth nonnegative cut-off function. By arguing as in \cite[Theorem 3.1]{ThinViscous}, for weak solution to the thin-film equation \eqref{ThinFilmEquation} with zero contact angle in the sense of Definition~\ref{DefinitionWeakSolution} constructed with the approximation procedure in \cite{ThinViscous} it is possible to prove the time-weighted localized $\alpha$-entropy estimate
	\begin{equation} 
	\begin{split}
	&\sup_{t \in (0,T)}\int_{\R^d} \psi^{4}u^{\alpha+1} \dx \bigg|_{0}^{T}
	\\ &~~~~~~+ C\left( \int_{0}^{T}\int_{\R^d} \psi^{4}\left|D^2u^{\frac{\alpha+n+1}{2}}\right|^2 \dx \dt  + \int_{0}^{T}\int_{\R^d}\psi^4 \left|\nabla u^{\frac{\alpha+n+1}{4}} \right|^4 \dx \dt \right)
\\ &\quad \le C \int_{0}^{T} \int_{\{\psi>0\}} u^{\alpha+n+1}(|\nabla \psi|^4 + \psi^2|D^2 \psi|^2) \dx \dt
+\int_{0}^{T}\int_{\R^d}|\pt \psi| u^{\alpha + 1}\dx\dt
\end{split}
\end{equation}
for any $\alpha \in \left(\frac{1}{2} - n, 2-n\right)\setminus \{-1,0\}$, $\alpha>0$, and a.e. $T \ge 0$. This implies by taking $\psi = \varphi_{r_k}t^\beta$ (with $0 < \beta < 1$),  
\begin{align*}
\sup_{t \in (0,T)}&\int_{B_{r_{k+1}}(x_0)} t^\beta  u^{\alpha+1} \dx + C   \int_0^T\int_{B_{r_{k+1}}(x_0)} t^\beta \left|\nabla u^{\frac{\alpha+n+1}{4}} \right|^4 \dx \dt  \\ &\le  C \int_{0}^T \int_{B_{r_k}(x_0)} t^{\beta -1} u^{\alpha + 1}\dx \dt   
	\\ & \quad + C \left(\frac{R}{2^{k+1}} \right)^{-4}\int_{0}^T \int_{B_{r_k}(x_0)} t^\beta u^{\alpha+n+1} \dx \dt.
\end{align*}
The Gagliardo-Nirenberg-Sobolev interpolation inequality \eqref{GNSInterpolation} (applied to $v= u^{\frac{\alpha + n + 1}{4}}$ with $p = 4$, $q = \frac{4}{\alpha + n +1}$, $r = 4$) yields 
	\begin{align*}
	\int_{B_{r_k}(x_0)} u^{\alpha + n +1} \dx \le & C\left(\int_{B_{r_k}(x_0)} \left|\nabla u^{\frac{\alpha +n +1}{4}} \right|^4 \dx \right)^\sigma   \left( \int_{B_{r_k}(x_0)} u \dx\right)^{(\alpha + n +1)(1-\sigma)} \\ & \ +C\left( \frac{R}{2^{k+1}}\right)^{-d(\alpha + n)}\left(\int_{B_{r_k}(x_0)} u \dx  \right)^{\alpha + n + 1}
	\end{align*}
	with 
	\begin{align*}
	\sigma = \frac{d(\alpha + n)}{(d\alpha + dn + 4)}~.
	\end{align*}
	Again by the Gagliardo-Nirenberg-Sobolev interpolation inequality \eqref{GNSInterpolation} (applied to $v= u^{\frac{\alpha + n + 1}{4}}$ with $p = \frac{4(\alpha + 1)}{\alpha + n + 1}$, $q = \frac{4}{\alpha + n +1}$, $r = 4 $), we also have 
	\begin{align*}
	\int_{B_{r_k}(x_0)}  u^{\alpha +1} \dx 
	&\le  C\left(\int_{B_{r_k}(x_0)}  \left|\nabla u^{\frac{\alpha +n +1}{4}} \right|^4 \dx \right)^{\frac{\nu(\alpha + 1)}{\alpha + n + 1}}    \left(\int_{B_{r_k}(x_0)} u \dx\right)^{(\alpha + 1)(1-\nu)} \\ &\quad +C\left( \frac{R}{2^{k+1}}\right)^{-d\alpha }   \left(\int_{B_{r_k}(x_0)} u \dx  \right)^{\alpha   + 1}
	\end{align*}
	with 
	\begin{align*}
	\nu = \frac{d\alpha(1+\alpha +n)}{( d \alpha + dn + 4)(\alpha + 1)}~.
	\end{align*}
	Putting these considerations together {and using also the definition \eqref{Entropy},} we obtain
	\begin{align*}
{
S(k+1) =}	\sup_{t \in (0,T)}&\int_{B_{r_{k+1}}(x_0)} t^\beta  u^{\alpha+1} \dx + C   \int_0^T\int_{B_{r_{k+1}}(x_0)} t^\beta \left|\nabla u^{\frac{\alpha+n+1}{4}} \right|^4 \dx \dt  \\ \le~&  C\int_{0}^T  t^{\beta -1} \left(\int_{B_{r_k}(x_0)} \left|\nabla u^{\frac{\alpha +n +1}{4}} \right|^4 \dx  \right)^{\frac{\nu(\alpha + 1)}{\alpha + n + 1}} \\ &\qquad\times \left( \int_{B_{r_k}(x_0)} u \dx\right)^{(\alpha + 1)(1-\nu)}  \dt
	\\ 
	& +  C \left(\frac{R}{2^{k+1}}\right)^{-d\alpha} \int_{0}^T  t^{\beta -1} \left(\int_{B_{r_k}(x_0)} u \dx \right)^{\alpha + 1}  \dt  \\
	&+ C \left(\frac{R}{2^{k+1}} \right)^{-4}\int_{0}^T t^\beta \left(\int_{B_{r_k}(x_0)} \left|\nabla u^{\frac{\alpha + n +1}{4}} \right|^4\dx \right)^{\sigma}
	\\ &\qquad\qquad\qquad\qquad\qquad \times \left(\int_{B_{r_k}(x_0)} u\dx \right)^{(\alpha + n +1)(1-\sigma) }  \dt
	\\ &+ C \left(\frac{R}{2^{k+1}} \right)^{-4-d(\alpha + n)}\int_{0}^T t^\beta \left(\int_{B_{r_k}(x_0)} u\dx \right)^{\alpha + n +1 }   \dt.
	\end{align*}
	Finally, by using H\"{o}lder's inequality, we infer under the assumption $(1+\beta)(1-\frac{\nu(\alpha+1)}{\alpha+n+1})-1>0$ (which is satisfied for $\alpha>0$ small enough, the required smallness depending on $\beta>0$) using also \eqref{Mass2} and \eqref{Entropy}
	\begin{align*}
	S(k+1)
	\le&  C T^{(1+\beta)(1-\frac{(\alpha + 1)\nu}{\alpha + n + 1})  - 1}
	S(k)^{\frac{\nu(\alpha + 1)}{\alpha + n + 1}}
	M(k)^{(\alpha + 1)(1-\nu)}
	\\ 
	&+ \left(\frac{R}{2^{k+1}}\right)^{-d\alpha} T^\beta
	M(k)^{\alpha+1}
	\\
	&+ C \left(\frac{R}{2^{k+1}} \right)^{-4}  T^{(1-\sigma) (1+ \beta)} 
	S(k)^{\sigma}
	M(k)^{(\alpha + n +1)(1-\sigma) }
	\\ &+ C \left(\frac{R}{2^{k+1}} \right)^{-4-d(\alpha + n)} T^{\beta + 1} 
	M(k)^{\alpha + n +1},
	\end{align*}
	i.\,e.\ for $\alpha>0$ small enough we have (plugging in the definition of $\nu$ and $\sigma$)
	\begin{align}
	\label{EstimateEnergyStrong}
	&
	S(k+1)
	\\\nonumber
	 &\le  C T^{\beta - \frac{d\alpha}{d\alpha + dn +4} - \beta \frac{d\alpha}{d\alpha + dn +4}} {S(k)}^{\frac{d\alpha}{d\alpha + dn +4}}
	  {M(k)}^{\frac{dn +4 + 4d}{d\alpha + dn +4}} 
	  \\ \nonumber 
	&\quad+ \left(\frac{R}{2^{k+1}}\right)^{-d\alpha} T^\beta  {M(k)}^{\alpha + 1} 
	\\ \nonumber
	&\quad + C \left(\frac{R}{2^{k+1}} \right)^{-4}  T^{(1+\beta)\frac{4}{d\alpha + dn +4}} {S(k)}^{\frac{d\alpha + dn}{d\alpha + dn +4}}
	{M(k)}^{\frac{4(\alpha + n +1)}{d\alpha + dn +4}}
	\\ \nonumber &\quad + C \left(\frac{R}{2^{k+1}} \right)^{-4-d(\alpha + n)} T^{\beta + 1} {M(k)}^{\alpha + n +1 }
	\\&=: E_1+E_2+E_3+E_4.\nonumber
	\end{align}
	
	\vspace{2mm}
	\noindent \emph{Step 3. Localized mass estimate.} Starting from the weak formulation of the thin-film equation (see Definition~\ref{DefinitionWeakSolution}c), we obtain 
	\begin{align*}
	\int_{\R^d}& u(x,T)\varphi \dx
		\\&
		=\int_{\R^d} u_0 \varphi \dx + \int_0^T\int_{\Rd \cap  \{u>0\}} u^n \nabla u \cdot \nabla \Delta \varphi \dx \dt
	\\&\quad + n \int_0^T\int_{\Rd \cap  \{u>0\}} u^{n-1}\nabla u \cdot D^2\varphi \cdot \nabla u \dx \dt
	\\&\quad
	+ \frac{n}{2} \int_0^T\int_{\Rd \cap \{u>0\}} u^{n-1}|\nabla u|^2\Delta \varphi \dx \dt
	\\&\quad 
	+ \frac{n(n-1)}{2} \int_0^T\int_{\Rd \cap  \{u>0\}} u^{n-2}|\nabla u|^2\nabla u \cdot \nabla \varphi \dx \dt
		\\&
	\leq
	\int_{\R^d} u_0 \varphi \dx
	+C\int_0^T\int_{\R^d}
	u^\frac{n+1-3\alpha}{4} \left|\nabla u^\frac{\alpha + n + 1}{4}\right|^3
	|\nabla \varphi|
	\dx\dt
	\\&~~~
	+C\int_0^T\int_{\R^d}
	u^{n+1}\left(|\Delta^2 \varphi|+\frac{|D^2\varphi|^3}{|\nabla \varphi|^2}\right)
	\dx\dt
	~.
	\end{align*}
	Choosing $\varphi = \varphi_{r_k}$ as a test function, the previous inequality implies 
	\begin{align*}
	{M(k+1)}&=\sup_{t \in (0,T)}\int_{B_{r_{k+1}}(x_0)} u \dx
	\\&
	\leq
	\int_{B_{r_k}(x_0)} u_0 \dx
	+C\left(\frac{R}{2^{k+1}}\right)^{-1}\int_0^T\int_{B_{r_k}(x_0)}
	u^\frac{n+1-3\alpha}{4} \left|\nabla u^\frac{\alpha + n + 1}{4}\right|^3
	\dx\dt
	\\&~~~
	+C\left(\frac{R}{2^{k+1}}\right)^{-4}\int_0^T\int_{B_{r_k}(x_0)}
	u^{n+1}
	\dx\dt.
	\end{align*}
	By H\"{o}lder's inequality we obtain
	\begin{align*}
	&{M(k+1)}		\\
	&\le \int_{B_{r_k}(x_0)} u_0 \dx
	\\&~~~	+C\left(\frac{R}{2^{k+1}}\right)^{-1}\int_0^T
	\left(\int_{B_{r_k}(x_0)}	u^{n+1-3\alpha}\dx \right)^{\frac{1}{4}}  \left(\int_{B_{r_k}(x_0)} \left|\nabla u^\frac{\alpha + n + 1}{4}\right|^4 \dx \right)^{\frac{3}{4}}
	\dt
	\\&~~~
	+C\left(\frac{R}{2^{k+1}}\right)^{-4}\int_0^T\int_{B_{r_k}(x_0)}
	u^{n+1}
	\dx\dt.
	\end{align*}
	The Gagliardo-Nirenberg-Sobolev interpolation inequality \eqref{GNSInterpolation} (applied to $v= u^{\frac{\alpha + n + 1}{4}}$ with $p = \frac{4(n+1)}{\alpha + n +1}$, $q = \frac{4}{\alpha + n +1}$, $r = 4 $) yields
	\begin{align*}
	\int_{B_{r_k}(x_0)} u^{n+1 } \dx   &\le C \left(\int_{B_{r_k}(x_0)} \left|\nabla u^{\frac{\alpha + n + 1}{4}} \right|^{4} \dx \right)^{\frac{\vartheta (n+1)}{\alpha + n + 1}} \left(\int_{B_{r_k}(x_0)} u \dx \right)^{(1-\vartheta)(n+1)} \\ & \quad + C \left(\frac{R}{2^{k+1}}\right)^{-nd} \left(\int_{B_{r_k}(x_0)} u \dx \right)^{n+1}
	\end{align*}
	with 
	\begin{align*}\vartheta = \frac{nd(\alpha + n + 1)}{(nd+\alpha d + 4)(n+1)}.
	\end{align*}
	Note that for all $\alpha\in (0,2-n)$, all $d\in \{1,2,3\}$, and all $1<n<2$ we have $0<\vartheta<1$.
	By the Gagliardo-Nirenberg-Sobolev interpolation inequality \eqref{GNSInterpolation} (applied to $v= u^{\frac{\alpha + n + 1}{4}}$ with $p = \frac{4(n+1-3\alpha)}{\alpha + n + 1}$, $q = \frac{4}{\alpha + n +1}$, $r = 4 $), we also have
	\begin{align*}
	\left(\int_{B_{r_k}(x_0)} u^{n+1-3\alpha} \dx \right)^{\frac{1}{4}} &\le C \left(\int_{B_{r_k}(x_0)} \left|\nabla u^{\frac{\alpha + n + 1}{4}} \right|^{4} \dx \right)^{\frac{\mu(n+1-3\alpha)}{4(\alpha + n + 1)}}  \\ & \qquad \times \left(\int_{B_{r_k}(x_0)} u \dx \right)^{\frac{(1-\mu)(n+1-3\alpha)}{4}} \\ & \quad + C \left(\frac{R}{2^{k+1}}\right)^{-d \frac{(n-3\alpha)}{4}} \left(\int_{B_{r_k}(x_0)} u \dx \right)^{\frac{n+1-3\alpha}{4}},
	\end{align*}
	with
	\begin{align*}
	\mu = \frac{d(n-3\alpha)(\alpha + n + 1)}{(nd+\alpha d + 4)(n+1-3\alpha)}.
	\end{align*}
	Putting these considerations together, we obtain
	\begin{align*}
	{M(k+1)}\leq&
	\int_{B_{r_k}(x_0)} u_0 \dx
	\\ &+C\left(\frac{R}{2^{k+1}}\right)^{-1}\int_0^T 
	\left(\int_{B_{r_k}(x_0)} \left|\nabla u^{\frac{\alpha + n + 1}{4}} \right|^{4} \dx \right)^{\frac{\mu(n+1-3\alpha)}{4(\alpha + n + 1)}+\frac{3}{4}}
	\\ & \qquad\qquad\qquad\qquad\qquad \times \left(\int_{B_{r_k}(x_0)} u \dx \right)^{\frac{(1-\mu)(n+1-3\alpha)}{4}}\dt
	\\ & + C \left(\frac{R}{2^{k+1}}\right)^{-1-d \frac{(n-3\alpha)}{4}} \int_0^T \left(\int_{B_{r_k}(x_0)} u \dx \right)^{\frac{n+1-3\alpha}{4}}
	\\& \qquad\qquad\qquad\qquad\qquad\qquad \times \left(\int_{B_{r_k}(x_0)} |\nabla u|^4 \dx  \right)^{\frac{3}{4}} \dt 
	\\&
	+C\left(\frac{R}{2^{k+1}}\right)^{-4}\int_0^T  \left(\int_{B_{r_k}(x_0)} \left|\nabla u^{\frac{\alpha + n + 1}{4}} \right|^{4} \dx\right)^{\frac{\vartheta (n+1)}{\alpha + n + 1}}
	\\ & \qquad\qquad\qquad\qquad\qquad \times  \left(\int_{B_{r_k}(x_0)} u \dx \right)^{(1-\vartheta)(n+1)}\dt \\ & + C \left(\frac{R}{2^{k+1}}\right)^{-4-dn} \int_0^T  \left(\int_{B_{r_k}(x_0)} u \dx \right)^{n+1}\dt.
	\end{align*}
	This implies by H\"{o}lder's inequality, assuming that $1-(1+\beta)(\frac{\mu(n+1-3\alpha)}{4(\alpha+n+1)}+\frac{3}{4})>0$ and $\beta<\frac{1}{3}$ as well as $1-\frac{\vartheta(n+1)}{\alpha+n+1}-\beta\frac{\vartheta(n+1)}{\alpha+n+1}>0$,
	\begin{align*}
	{M(k+1)}\leq&
	\int_{B_{r_k}(x_0)} u_0 \dx \\ &+C\left(\frac{R}{2^{k+1}}\right)^{-1}
	T^{1-\frac{\mu(n+1-3\alpha)}{4(\alpha + n + 1)} - \frac{3}{4} - \beta \left( \frac{\mu(n+1-3\alpha)}{4(\alpha + n + 1)} + \frac{3}{4}\right) }  \\ & \qquad \times \left(\int_0^T  \int_{B_{r_k}(x_0)} t^\beta \left|\nabla u^{\frac{\alpha + n + 1}{4}} \right|^{4} \dx \dt \right)^{\frac{\mu(n+1-3\alpha)}{4(\alpha + n + 1)} +\frac{3}{4}}  \\ & \qquad \times \left(\sup_{t \in (0,T)} \int_{B_{r_k}(x_0)} u \dx \right)^{\frac{(1-\mu)(n+1-3\alpha)}{4}} \\ & + C \left(\frac{R}{2^{k+1}}\right)^{-1-d \frac{(n-3\alpha)}{4}} T^{\frac{1}{4} - \frac{3}{4}\beta} \left(\int_0^T \int_{B_{r_k}(x_0)} t^\beta \left|\nabla u^{\frac{\alpha + n + 1}{4}} \right|^{4} \dx \dt \right)^{\frac{3}{4}}  \\ & \qquad \times \left(\sup_{t \in (0,T)}\int_{B_{r_k}(x_0)} u \dx \right)^{\frac{n+1-3\alpha}{4}}
	\\&
	+C\left(\frac{R}{2^{k+1}}\right)^{-4} T^{1-\frac{\vartheta (n+1)}{\alpha + n +1} - \beta\frac{\vartheta (n+1)}{\alpha + n +1}} 	\\ &\qquad \times \left(\int_0^T  \int_{B_{r_k}(x_0)} t^\beta \left|\nabla u^{\frac{\alpha + n + 1}{4}} \right|^{4} \dx \dt \right)^{\frac{\vartheta(n+1)}{(\alpha + n + 1)}}  \\ & \qquad \times \left(\sup_{t \in (0,T)}\int_{B_{r_k}(x_0)} u \dx \right)^{(1-\vartheta)(n+1)} \\ & + C \left(\frac{R}{2^{k+1}}\right)^{-4-dn} T  \left(\sup_{t \in (0,T)}\int_{B_{r_k}(x_0)} u \dx \right)^{n+1}.
	\end{align*}
	Plugging in $\mu$ and $\vartheta$ as well as the definitions \eqref{Mass2} and \eqref{Entropy}, we deduce
	\begin{align}
	\label{EstimateMassStrong}
	&
	M(k+1)
	\\
	\nonumber
	&\leq
	\int_{B_{r_k}(x_0)} u_0 \dx
	\\
	\nonumber
	&~~~+C\left(\frac{R}{2^{k+1}}\right)^{-1}
	T^{\frac{\alpha d +1}{dn +\alpha d +4} - \beta \frac{dn+3}{dn +\alpha d +4}}
	S(k)^{\frac{dn+3}{dn +\alpha d +4}} 
	M(k)^{\frac{\alpha d + n +1 -3\alpha}{dn +\alpha d +4}}
	\\ 
	\nonumber
	&~~~ + C \left(\frac{R}{2^{k+1}}\right)^{-1-d \frac{(n-3\alpha)}{4}} T^{\frac{1}{4} - \frac{3}{4}\beta}
	S(k)^{\frac{3}{4}}
	M(k)^{\frac{n+1-3\alpha}{4}}
	\\&~~~
	\nonumber
	+C\left(\frac{R}{2^{k+1}}\right)^{-4} T^{\frac{\alpha d + 4}{dn +\alpha d +4} - \beta\frac{dn}{dn +\alpha d +4}}
S(k)^{\frac{dn}{dn +\alpha d +4}} 
M(k)^{\frac{\alpha d +4n+4}{nd +\alpha d +4}}
	 \\
	\nonumber&~~~ + C \left(\frac{R}{2^{k+1}}\right)^{-4-dn} T
	M(k)^{n+1}
	\\&=:M_1+M_2+M_3+M_4+M_5
	\nonumber
	\end{align}
	under the assumptions $(\alpha d+1)-\beta(dn+3)>0$, $\beta<\frac{1}{3}$, and $\alpha d+4-\beta dn>0$. Note that for $\beta<\frac{1}{9}$ these assumptions are satisfied.
	
	\vspace{2mm}
	\noindent
	\emph{Step 3. Down-propagation of the degeneracy.}
	{Arguing analogously to the case $n\in [2,3)$,}
	we want to prove {by induction} that for every $k \in \N$ the following estimates on the locally averaged mass and the locally averaged weighted entropy (and dissipation) hold: 
	\begin{subequations}
	\label{BoundsStrong}
	\begin{align}
	&M(k) \le \eps~ T^{-1/n}   \left(\frac{R}{2^k} \right)^{4/n+d}, \\
	&S(k) \le \eps^\delta~ T^\beta ~ T^{-(1+\alpha)/n} \left(\frac{R}{2^k} \right)^{4(\alpha +1)/n+d},
	\end{align}
	\end{subequations}
	where 
	$\eps, \delta >0$ are constants that will be chosen suitably below and where $\alpha$ and $\beta$ are arbitrary within the bounds given above.  Note that these estimates directly entail the desired result \eqref{GoalStrong} for all $k\in \mathbb{N}$.

	{As in the case $n\in [2,3)$, it is immediate to check that the estimates hold for $k=1$ provided that we fix $R>0$ large enough.} Indeed,
	\begin{align*}
	M(1)=
	\sup_{t \in (0,T)} \int_{B_{r_1}(x_0)} u \dx \le \eps ~ T^{-1/n}    \left(\frac{R}{2}\right)^{\frac{4}{n} + d}
	\end{align*} follows from the fact that 
	\begin{align*}
	\int_{\R^d} u \dx = \int_{\R^d} u_0 \dx < \infty.
	\end{align*}
	On the other hand, to prove 
	\begin{align*}
	S(1)=\sup_{t \in (0,T)} &\int_{B_{r_1}(x_0)} t^\beta u^{\alpha + 1} \dx + \int_0^T \int_{B_{r_1}(x_0)} t^\beta \left|\nabla u^{\frac{\alpha + n+1}{4}}\right|^4 \dx \dt \\ &\quad\le \eps^\delta~  T^\beta  ~ T^{-(\alpha+1)/n}  \left(\frac{R}{2}\right)^{\frac{4}{n}(\alpha + 1) + d}
	\end{align*}
	we argue as follows.  By the property of finite speed of propagation for the solutions to the thin-film equation (see \cite[Theorem 5.2]{ThinViscous}), 
	there exists a ball $B_{\bar{R}}(x_0)$  that contains $\supp u(\cdot, t)$ for $t \in [0,T)$. We consider a smooth cut-off function $\varphi$ such that $\supp \varphi \subset B_R(x_0)$, $0 \le \varphi \le 1$, and $\varphi \equiv 1$ in $B_{\bar R}$.  Then, from the weighted entropy estimate it follows that 
	\begin{align*}
	S(1)=
	\sup_{t \in (0,T)}&\int_{B_{r_1}(x_0)} t^\beta  u^{\alpha+1} \dx + C   \int_0^T\int_{B_{r_1}(x_0)} t^\beta \left|\nabla u^{{\tiny }\frac{\alpha+n+1}{4}} \right|^4 \dx \dt  \\ &\le  C T^{\beta - \frac{\beta(\alpha + 1)\nu}{\alpha + n + 1}  - \nu \frac{(\alpha + 1)}{\alpha + n + 1}}\left(\int_{0}^T   \int_{B_{ \bar R}(x_0)} t^\beta\left| \nabla u^{\frac{\alpha + n + 1}{4}} \right|^4  \dx \dt \right)^{\frac{\nu(\alpha + 1)}{\alpha + n + 1}}   \\ &\qquad \times \left(\sup_{t \in (0,T)}\int_{B_{ \bar R}(x_0)} u \dx \right)^{(\alpha + 1)(1-\nu)}   \\ 
	&\quad+ C \bar R^{-d\alpha} T^\beta  \left(\sup_{t \in (0,T)}\int_{B_{\bar R}(x_0)} u \dx \right)^{\alpha + 1} .
	\end{align*}
	Young's inequality yields with $\zeta:=\frac{\alpha+n+1}{\nu(\alpha+1)}$ and $\zeta'$ subject to $\frac{1}{\zeta}+\frac{1}{\zeta'}=1$
		\begin{align*}
S(1)=\sup_{t \in (0,T)}&\int_{B_{ \bar R}(x_0)} t^\beta  u^{\alpha+1} \dx + C   \int_0^T\int_{B_{ \bar R}(x_0)} t^\beta \left|\nabla u^{{\tiny }\frac{\alpha+n+1}{4}} \right|^4 \dx \dt  \\ &\le  C T^{\beta} T^{\zeta'(\beta - \frac{\beta(\alpha + 1)\nu}{\alpha + n + 1}  - \nu \frac{(\alpha + 1)}{\alpha + n + 1})-\beta} \left(\sup_{t \in (0,T)}\int_{B_{ \bar R}(x_0)} u \dx \right)^{(\alpha+1)(1-\nu)\zeta'}   \\ 
	&\quad + C \bar R^{-d\alpha} T^\beta  \left(\sup_{t \in (0,T)}\int_{B_{ \bar R}(x_0)} u \dx \right)^{\alpha + 1} .
	\end{align*}
	This implies the claim, if we choose $R$ and $\bar R$  large enough, since	\begin{align*}\int_{\R^d} u \dx = \int_{\R^d} u_0 \dx < \infty.\end{align*} 
	
	Having proved the base step of the induction, we now show that the bounds are propagated down to smaller scales: Assuming  
	\begin{align}\label{hypa1}
&M(k)
\le  \eps~T^{-1/n}    \left(\frac{R}{2^{k}}\right)^{\frac{4}{n} + d},
\\ \label{hypa2}
&S(k)
\le \eps^\delta~ T^\beta~T^{-(\alpha + 1)/n}    \left(\frac{R}{2^{k}}\right)^{\frac{4}{n}(\alpha + 1) + d},
\end{align}
we claim 
\begin{align}\label{mass2}
&
M(k+1)
\le  \eps~T^{-1/n}    \left(\frac{R}{2^{k+1}}\right)^{\frac{4}{n} + d},
\\ \label{entropy2}
&
S(k+1)
\le \eps^\delta~ T^\beta~T^{-(\alpha + 1)/n}    \left(\frac{R}{2^{k+1}}\right)^{\frac{4}{n}(\alpha + 1) + d}.
	\end{align}
	\smallskip
	
	Plugging the induction hypothesis \eqref{hypa1}-\eqref{hypa2} as well as the assumption \eqref{ConditionWaitingTime} into the localized entropy and mass estimates \eqref{EstimateEnergyStrong} and \eqref{EstimateMassStrong}, we obtain
\begin{align*}
	&E_1 \le \dotuline{C~\eps^{\frac{4+nd+4\alpha-\delta nd-4\delta}{dn+\alpha n +4}}}~\eps^\delta~ T^{\beta}~T^{-\frac{\alpha +1}{n}} \left(\frac{R}{2^{k+1}}\right)^{\frac{4}{n}(\alpha + 1) +d}~;\\
	&E_2 \le \dotuline{C~\eps^{\alpha+1-\delta}}~\eps^\delta~ T^{\beta}~T^{-\frac{\alpha +1}{n}} \left(\frac{R}{2^{k+1}}\right)^{\frac{4}{n}(\alpha + 1) +d}~;  \\
	&E_3 \le \dotuline{C~\eps^{\frac{4(\alpha + n +1 -\delta) }{nd+\alpha n + 4}}}~\eps^\delta~ T^{\beta}~T^{-\frac{\alpha +1}{n}} \left(\frac{R}{2^{k+1}}\right)^{\frac{4}{n}(\alpha + 1) +d}~;\\
	&E_4 \le \dotuline{C~\eps^{\alpha + n +1 -\delta}}~\eps^{\delta}~T^{\beta }~T^{-\frac{\alpha + 1}{n}} \left(\frac{R}{2^{k+1}}\right)^{\frac{4}{n}(\alpha + 1) +d}~; 
	\\
	\\
	&M_1 \le {C~\eps^{-1}~\kappa~ T^{\frac{1}{n}}}~\eps~T^{-\frac{1}{n}}  \left(\frac{R}{2^{k+1}}\right)^{\frac{4}{n} +d}~; 
	\\
	&M_2 \le \dotuline{C~\eps^{\frac{\delta nd + 3\delta -nd + n -3 -3\alpha }{nd+\alpha d +4} }} ~\eps~T^{-\frac{1}{n}}  \left(\frac{R}{2^{k+1}}\right)^{\frac{4}{n} +d}~; \\
	&M_3 \le \dotuline{C~\eps^{\frac{n-3-3\alpha +3\delta}{4}}}~\eps~T^{-\frac{1}{n}}  \left(\frac{R}{2^{k+1}}\right)^{\frac{4}{n} +d}~;\\
	&M_4 \le \dotuline{C~\eps^{\frac{4n-nd+\delta dn}{nd+\alpha d +4}}}~\eps~T^{-\frac{1}{n}}  \left(\frac{R}{2^{k+1}}\right)^{\frac{4}{n} +d}~;\\
	&M_5 \le \dotuline{C~\eps^{n}}~\eps~T^{-\frac{1}{n}}  \left(\frac{R}{2^{k+1}}\right)^{\frac{4}{n} +d}.
\end{align*}
Putting these estimates together, we conclude that 
\eqref{mass2} and \eqref{entropy2} hold if $\eps$,  $\delta$, and $\alpha$ are chosen in a suitable way (i.\,e. $\eps$ small enough and $\delta$ and $\alpha$ in such a way that the exponents in the \dotuline{underlined factors} are positive, for example $\delta:=1$ and $\alpha>0$ small enough) and if we suppose that $T$ satisfies
	\begin{align*}
	C~\eps^{-1}~ T^{1/n}~ \kappa \le 1.
	\end{align*}
	This completes the induction and shows \eqref{BoundsStrong} for all $k$.
\end{proof}

\vspace{5mm}
\appendix
\section{Auxiliary inequalities}
\label{AppendixInequalities}

\subsection{The monotonicity formula}

We recall the rigorous statement of the monotonicity formula \eqref{MonotonicityFormula}. Note that we only formulate the monotonicity formula for the range of values for which we have used in the proof of Theorem~\ref{th:massnec}; in fact, the monotonicity formula is valid for a much wider range of values of $n$, $\alpha$, and $\gamma$.
{\begin{theorem}[see \cite{FischerARMA,FischerAHP}]
\label{MonotonicityFormulaRigorous}
Let $d=1$ and let $n\in (2,3)$.
Let $u_0\in H^1(\mathbb{R})$ be compactly supported and nonnegative.
Let $u$ be an energy-dissipating weak solution to the thin-film equation with zero contact angle and initial data $u_0$ in the sense of Definition~\ref{DefinitionEnergyDissipatingWeakSolution}.
In case $n<\frac{32}{11}$, let $\alpha:=-\frac{11}{20}n+\frac{12}{20}$ and $\gamma:=-2$; in case $n\geq \frac{32}{11}$, let $\alpha:=\frac{1-n}{2}$ and $\gamma:=-\frac{11}{10}$.

Then for almost every $0<t_1<t_2<T$ the following statement holds: For any $x_0\notin \cup_{t\in [t_1,t_2]} \supp u(\cdot,t)$, the monotonicity formula
\begin{align*}
&\int_{\mathbb{R}} u^{1+\alpha}(x,t_2) |x-x_0|^\gamma \dx
\\&
\geq
\int_{\mathbb{R}} u^{1+\alpha}(x,t_1) |x-x_0|^\gamma \dx
\\&~~~~
+c\int_{t_1}^{t_2} \int_{\mathbb{R}} u^{1+\alpha+n} |x-x_0|^{\gamma-4} + \big|\nabla u^{(1+\alpha+n)/4}\big|^4 |x-x_0|^\gamma \dx\dt
\end{align*}
holds for some constant $c(n)>0$.
\end{theorem}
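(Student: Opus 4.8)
The plan is to recognize that this is exactly the weighted entropy/Bernis estimate established in \cite{FischerARMA,FischerAHP}; the task here is to verify that the displayed parameter choices fall within the scope of those works and to recall the mechanism that makes the inequality work. Concretely, I would first pass to the level of the regularized solutions $u_\varepsilon$ underlying the existence theory behind Definition~\ref{DefinitionEnergyDissipatingWeakSolution}: these are smooth and strictly positive, solve a non-degenerate fourth-order equation, and converge to $u$ together with the a priori bounds of Definition~\ref{DefinitionEnergyDissipatingWeakSolution}a)--b). I would simultaneously regularize the singular weight, replacing $|x-x_0|^\gamma$ by a smooth bounded monotone approximation $w_\sigma$ (e.g.\ a truncated version of $(|x-x_0|^2+\sigma^2)^{\gamma/2}$). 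This regularization is harmless because the hypothesis $x_0\notin\bigcup_{t\in[t_1,t_2]}\supp u(\cdot,t)$ combined with finite speed of propagation (\cite[Theorem 1.3]{GruenOptimalRatePropagation}) confines $\supp u_\varepsilon(\cdot,t)$, for $t$ in a neighbourhood of $[t_1,t_2]$ and $\varepsilon$ small, to a fixed compact set bounded away from $x_0$, so the weight and its derivatives are bounded on the relevant region and all boundary terms will vanish.

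Second, I would differentiate $t\mapsto\int_{\mathbb{R}} u_\varepsilon^{1+\alpha} w_\sigma\,\d x$ in time, insert the equation $\partial_t u_\varepsilon=-(u_\varepsilon^n\partial_{xxx}u_\varepsilon)_x$, and integrate by parts twice in $x$. Collecting terms, the right-hand side becomes an integral of a quadratic form in $\partial_{xx}u_\varepsilon$ and $(\partial_x u_\varepsilon)^2$ whose coefficients are products of powers of $u_\varepsilon$ with $w_\sigma$, $w_\sigma'$, $w_\sigma''$. The algebraic heart of the matter --- and the step I expect to be the main obstacle --- is to show that for the stated choices of $\alpha$ and $\gamma$ this quadratic form is bounded below by a positive multiple of
\begin{align*}
u_\varepsilon^{1+\alpha+n}|x-x_0|^{\gamma-4}+\bigl|\partial_x u_\varepsilon^{(1+\alpha+n)/4}\bigr|^4|x-x_0|^{\gamma},
\end{align*}
up to lower-order terms absorbable in the regularization. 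This is precisely the weighted Bernis-type inequality of \cite{FischerARMA,FischerAHP}, proved there by an explicit diagonalization of the $2\times2$ coefficient matrix followed by an optimization over the free parameters; the two distinct parameter regimes and the threshold $n=\frac{32}{11}$ in the statement are exactly what that optimization produces.

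Third, I would integrate over $[t_1,t_2]$ (for a.e.\ $t_1<t_2$, using absolute continuity of $t\mapsto\int u_\varepsilon^{1+\alpha}w_\sigma\,\d x$), then let $\sigma\to0$ by monotone convergence on the two dissipation terms and dominated convergence on the boundary terms (the weight being genuinely integrable against $u^{1+\alpha}$ since the support avoids $x_0$), and finally let $\varepsilon\to0$: the boundary terms converge using the $L^1$-type convergence of $u_\varepsilon$ and the uniform support bound, while the two dissipation integrals are handled by weak lower semicontinuity, invoking $\nabla u^{(1+\alpha+n)/4}\in L^4$ and $D^2u^{(1+\alpha+n)/2}\in L^2$ from Definition~\ref{DefinitionEnergyDissipatingWeakSolution}b) to identify the limits with the terms in the statement. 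This delivers the asserted inequality with the constant $c(n)>0$ coming from the quadratic-form bound. Apart from that second step, which is genuinely delicate, everything is a routine approximation argument of the kind standard in the thin-film literature, so I would mostly import it from \cite{FischerARMA,FischerAHP}.
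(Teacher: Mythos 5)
Your proposal is correct and matches the paper's treatment of this statement: the paper does not reprove Theorem~\ref{MonotonicityFormulaRigorous} but explicitly defers it to \cite[Proof of Theorem~1]{FischerARMA} (for $n<\frac{32}{11}$) and \cite[Proof of Theorem~6]{FischerAHP} (for $n\geq \frac{32}{11}$), which is exactly what your sketch does for the decisive weighted Bernis-type quadratic-form estimate. The regularization, time-differentiation, and limit-passing scaffolding you describe is the standard mechanism of those references, so nothing further is needed.
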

The proof of the monotonicity formula in case $n\in (2,\frac{32}{11})$ can be found in \cite[Proof of Theorem~1]{FischerARMA}, while the proof in the case $n\in [\frac{32}{11},3)$ is provided in \cite[Proof of Theorem~6]{FischerAHP}.
}

\subsection{Gagliardo-Nirenberg-Sobolev's interpolation inequality}
\label{SubSectionGagliardoNirenberg}

For the sake of completeness, we recall the version of Gagliardo-Nirenberg-Sobolev's interpolation inequality that has been used throughout the paper (see e.\,g.\ \cite[Proposition A.1]{DalPassoGiacomelliShishkov}).

\begin{theorem}[Gagliardo-Nirenberg-Sobolev's interpolation inequality]
	Let $\Omega \subset \R^d$ be an open bounded set with piecewise smooth boundary $\partial \Omega$. Let $0 < q < p$, $1 \le r \le \infty$, and $k \in \N$.  Let $v \in L^q(\Omega)$ such that $D^{k}v \in L^r(\Omega)$. Then there exist constants $C_1$ and $C_2$ (depending only on $\Omega$, $k$, $q$, and $r$) such that 
	\begin{align}\label{GNSInterpolation}
	\Vert v \Vert_{L^p(\Omega)} \le C_1\Vert D^k v\Vert_{L^r(\Omega)}^\vartheta  \Vert v \Vert_{L^q(\Omega)}^{1-\vartheta} +C_2\Vert v \Vert_{L^q(\Omega)},
	\end{align}
	where  \begin{align*} \vartheta := \frac{\frac{1}{q} - \frac{1}{p}}{\frac{1}{q} + \frac{k}{d} - \frac{1}{r}} \in (0,1).\end{align*}
	
	In addition, there exists a positive constant $C$ (depending on $r$, $m$, $q$ and $d$ and independent of $\Omega$) such that the following propositions hold true.
	\begin{enumerate}
		\item If $\Omega$ is either the  $d$-dimensional cube  $Q_\lambda(0)$ centered at the origin and of side-length $\lambda$ or the $d$-dimensional ball $B_{\lambda}(0)$ centered at the origin and of radius $\lambda$, then \eqref{GNSInterpolation} holds with 
		\begin{align*}C_1 = C \quad \text{ and } \quad C_2 = C \lambda^{-d\left(\frac{1}{q} - \frac{1}{p}\right)}.\end{align*}
		\item If $0 \le r_1 < r_2$, with $2r_1 > r_2$ if $d >1$, and $\Omega = B_{r_2}(0) \setminus B_{r_1}(0)$, then \eqref{GNSInterpolation} holds with 
		\begin{align*}C_1 = C \quad \text{ and } \quad C_2 = C\left(r_2-r_1 \right)^{-d\left(\frac{1}{q} - \frac{1}{p}\right)}.
		\end{align*}
		\item If $\Omega = \R^d$,  then \eqref{GNSInterpolation} holds with  
		\begin{align*}C_1 = C \quad \text{ and } \quad C_2 = 0.\end{align*}
	\end{enumerate}
\end{theorem}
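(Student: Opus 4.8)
The plan is to first establish the scale-invariant form of \eqref{GNSInterpolation} on $\R^d$ (the case $C_1 = C$, $C_2 = 0$), and then to deduce the statement on a general bounded domain $\Omega$ with piecewise smooth boundary — together with the explicit dependence of the constants on the geometry of $\Omega$ — by combining a bounded extension operator with a dilation argument.

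For the estimate on $\R^d$ I would begin with the case $k=1$. The borderline Sobolev inequality $\Vert v\Vert_{L^{d/(d-1)}(\R^d)}\leq C\Vert\nabla v\Vert_{L^1(\R^d)}$ for $v\in C_c^\infty(\R^d)$ follows from the Gagliardo slicing argument: bound $|v(x)|$ by $\int_\R|\partial_i v|\,\d x_i$ for each coordinate $i$, take the product over $i=1,\dots,d$, extract the $(d-1)$-st root, and integrate one variable at a time using the multilinear Hölder inequality. To reach arbitrary exponents $0<q<p$ and $1\leq r\leq\infty$ with a single derivative, I would apply this borderline inequality to $|v|^\gamma$ for a suitable power $\gamma=\gamma(d,p,q,r)>1$, split the resulting right-hand side by Hölder's inequality as $\Vert |v|^{\gamma-1}\nabla v\Vert_{L^1}\leq\Vert |v|^{\gamma-1}\Vert_{L^{r'}}\Vert\nabla v\Vert_{L^r}$, and rearrange; the exponent that emerges is forced by scaling and coincides with $\vartheta$ in the statement when $k=1$, and a density argument removes the smoothness and compact-support hypotheses. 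For general $k$ I would induct on $k$, applying the $k=1$ estimate to $D^{k-1}v$ and controlling the intermediate derivatives $D^j v$, $1\leq j\leq k-1$, by the standard Gagliardo-Nirenberg bounds $\Vert D^j v\Vert_{L^{s_j}}\leq C\Vert D^k v\Vert_{L^r}^{j/k}\Vert v\Vert_{L^q}^{1-j/k}$, which are themselves consequences of the $k=1$ case together with an absorption argument. Since $\vartheta$ lies strictly in $(0,1)$ throughout the admissible range of exponents, none of the exceptional endpoint cases of the Gagliardo-Nirenberg inequality interfere and the induction closes.

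For a bounded domain $\Omega$ with piecewise smooth (hence Lipschitz) boundary I would fix a bounded linear extension operator $E$ from $W^{k,r}(\Omega)\cap L^q(\Omega)$ into $W^{k,r}(\R^d)\cap L^q(\R^d)$, multiply $Ev$ by a fixed cutoff supported in a bounded neighbourhood of $\Omega$, and apply the $\R^d$ inequality to the product. Estimating every derivative term produced by the cutoff in terms of $\Vert v\Vert_{W^{k,r}(\Omega)}$ and $\Vert v\Vert_{L^q(\Omega)}$ and absorbing the lower-order contributions yields precisely an inequality of the asserted form, with $C_1$ and $C_2$ depending only on $\Omega$, $k$, $q$, $r$.

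The explicit scaling is then obtained by dilation. For $\Omega=Q_\lambda(0)$ or $B_\lambda(0)$, writing $w(y):=v(\lambda y)$ on the unit cube resp. unit ball $\Omega_1$ one has $\Vert v\Vert_{L^p(\Omega)}=\lambda^{d/p}\Vert w\Vert_{L^p(\Omega_1)}$, $\Vert D^k v\Vert_{L^r(\Omega)}=\lambda^{d/r-k}\Vert D^k w\Vert_{L^r(\Omega_1)}$, and $\Vert v\Vert_{L^q(\Omega)}=\lambda^{d/q}\Vert w\Vert_{L^q(\Omega_1)}$; substituting into the unit-scale inequality, the powers of $\lambda$ in front of the first term cancel identically — this cancellation is exactly the algebraic identity defining $\vartheta$ — so $C_1$ is $\lambda$-independent, while the second term picks up $\lambda^{d/p-d/q}=\lambda^{-d(1/q-1/p)}$, giving $C_2=C\lambda^{-d(1/q-1/p)}$. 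For the annulus $\Omega=B_{r_2}(0)\setminus B_{r_1}(0)$ with $2r_1>r_2$ when $d>1$, the hypothesis $2r_1>r_2$ guarantees that, after rescaling by the width $r_2-r_1$, the annulus has uniformly bounded geometry — it is a bounded-distortion image of a fixed reference annulus and carries extension operators of controlled norm — so the same dilation argument applies with $\lambda$ replaced by $r_2-r_1$; and the case $\Omega=\R^d$ with $C_2=0$ is already contained in the first part (or follows by letting $\lambda\to\infty$ in the ball case). I expect the main obstacle to be the higher-derivative induction: keeping each auxiliary exponent $s_j$ in its valid range and ruling out endpoint degeneracies requires some care, although it is entirely classical; tracking the uniformity of the constants through the extension operator for merely piecewise smooth $\Omega$ is routine but must be done explicitly.
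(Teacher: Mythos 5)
The paper does not prove this theorem at all: it is quoted verbatim from the literature (Proposition A.1 of Dal Passo--Giacomelli--Shishkov), so there is no internal proof to compare against. Your reconstruction follows the classical route (Gagliardo slicing for the borderline $W^{1,1}\hookrightarrow L^{d/(d-1)}$ estimate, the $|v|^\gamma$ trick plus H\"older to reach general exponents, induction on $k$ via the intermediate-derivative bounds, extension plus dilation for bounded domains), and parts (1) and (3) together with the general bounded-domain statement are handled correctly; in particular the observation that the definition of $\vartheta$ is exactly the cancellation of the powers of $\lambda$ in the leading term is the right way to see the scaling of $C_1$ and $C_2$.

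There is, however, one step that fails as written: the treatment of the annulus in part (2). Rescaling $B_{r_2}(0)\setminus B_{r_1}(0)$ by the width $\lambda=r_2-r_1$ produces the annuli $B_{R+1}(0)\setminus B_R(0)$ with $R=r_1/(r_2-r_1)$, and under the hypothesis $2r_1>r_2$ this ratio $R$ is bounded \emph{below} but not above, so these rescaled domains are \emph{not} bounded-distortion images of a single reference annulus (any bi-Lipschitz identification with a fixed annulus must have distortion growing like $R$), and the constant you would extract from that claim is not uniform. The standard repair is a covering argument: cover the annulus by cubes $Q_i$ of side comparable to $r_2-r_1$ with bounded overlap (the condition $2r_1>r_2$ guarantees each $Q_i\cap\Omega$ is uniformly bi-Lipschitz to a fixed reference configuration), apply the cube estimate from part (1) on each $Q_i$, raise to the power $p$ and sum. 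The second term sums because $p/q>1$ gives $\sum_i\|v\|_{L^q(Q_i)}^p\le(\sum_i\|v\|_{L^q(Q_i)}^q)^{p/q}$, and the first term sums by H\"older for sequences using that $\frac{\vartheta p}{r}+\frac{(1-\vartheta)p}{q}=1+\frac{\vartheta kp}{d}\ge 1$, which is again precisely the identity defining $\vartheta$. A second, more minor caveat: for $q<1$ the space $L^q$ is only quasi-normed, so the closing ``density argument'' on $\R^d$ and the boundedness of the extension operator on $L^q$ need the quasi-triangle inequality and a Fatou-type passage rather than mollification estimates (mollification is not bounded on $L^q$ for $q<1$); this is routine but should not be waved away, since the theorem as stated explicitly allows $0<q<1$ and is applied in the paper with such exponents.
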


\vspace{2mm}
\subsection{Bernis-Gr\"{u}n's weighted interpolation inequality}
\label{SubSectionBernisGruen}

We state Gr\"{u}n's weighted interpolation inequality (see \cite[Theorem III.1 and Corollary III.2]{CauchyGruen} and \cite{GruenBernis}), which was proved by Bernis in one space dimension (see \cite[Theorem 1]{BernisIntegral}) and plays an important role in handling the energy estimate for the thin-film equation. 

\begin{theorem}[Bernis-Gr\"{u}n's weighted interpolation inequality]
	Let $\Omega \subset \R^d$, with $2 \le d < 6$, be a bounded convex domain with a smooth boundary. Assume that a strictly positive function $u \in H^2(\Omega)$ satisfies 
	\begin{align*}\partial_\nu u\vert_{\partial \Omega} = 0 \qquad \text{ and } \qquad \int_{\Omega} u^n|\nabla \Delta u|^2 \dx < \infty,
	\end{align*}
	where $n \in \left(2- \sqrt{\frac{8}{8+d}}, 3\right)$. Let $\varphi \in C^1(\overline{\Omega})$ be a nonnegative function. Then there exists a positive constant $C$, which only depends on $d$ and $n$, such that 
	\begin{align*}
	&\int_{\Omega} \varphi^6 u^{n-4} |\nabla u|^6 \dx + \int_{\Omega} \varphi^6 u^{n-2} |D^2 u|^2 |\nabla u|^2 \dx \\ & \qquad + \int_{\partial \Omega} \varphi^6 u^{n-2}|\nabla u|^2 H(\nabla u, \nabla u) \dx \\ &~~~ \le C \left(\int_{\Omega} \varphi^6 u^n |\nabla \Delta u|^2 \dx + \int_{\{\varphi >0\}} u^{n+2} |\nabla \varphi|^6\dx\right),
	\end{align*} 
	where $H(\cdot, \cdot)$ is the second fundamental form of $\partial \Omega$.
	In particular, we have
\begin{align*}
&\int_{\Omega} \varphi^6|\nabla u^{\frac{n+2}{6}}|^6 \dx + \int_{\Omega} \varphi^6 |\nabla \Delta u^{\frac{n+2}{2}}|^2 \dx
\\& ~~~
\nonumber
\le C\left(\int_{\Omega} \varphi^6 u^n|\nabla \Delta u|^2 \dx + \int_{\{\varphi >0\}} u^{n+2}|\nabla \varphi|^6 \dx \right).
\end{align*}
\end{theorem}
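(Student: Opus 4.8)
The plan is to follow the classical Bernis--Gr\"un strategy, in which the ``fourth-order dissipation'' $\int_\Omega \varphi^6 u^n|\nabla\Delta u|^2\dx$ is shown to control the ``entropy-type'' quantities $\int_\Omega\varphi^6 u^{n-4}|\nabla u|^6\dx$ and $\int_\Omega\varphi^6 u^{n-2}|D^2u|^2|\nabla u|^2\dx$ by repeated integration by parts. First I would reduce, by a mollification/density argument using $u\in H^2(\Omega)$ together with the strict positivity of $u$, to the case of a smooth strictly positive $u$, for which all the manipulations below are legitimate; the general statement then follows by a Fatou/lower-semicontinuity argument applied to both sides.

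For the core computation, integrate by parts in $\int_\Omega\varphi^6 u^n\,\nabla\Delta u\cdot\nabla\Delta u\dx$, moving one gradient off the second factor and repeatedly writing $\Delta u=\div\nabla u$, so as to pair $\varphi^6 u^n\nabla\Delta u$ against vector fields of the form $\nabla\bigl(\varphi^6 u^{n-2}|\nabla u|^2\nabla u\bigr)$ and $\nabla\bigl(\varphi^6 u^{n-4}|\nabla u|^4\nabla u\bigr)$; equivalently, test the identity built from $\div(u^n\nabla\Delta u)$ against these fields. Expanding all derivatives by the product and chain rules, the leading contributions are positive multiples of $\varphi^6 u^{n-4}|\nabla u|^6$ and $\varphi^6 u^{n-2}|D^2u|^2|\nabla u|^2$, plus indefinite terms which are quadratic forms in $D^2u$ (and in the ``radial'' combination $|\nabla u|^{-2}\nabla u\cdot D^2u\cdot\nabla u$) with coefficients depending on $n$ and $d$, plus terms carrying derivatives of $\varphi$. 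The restriction $n\in\bigl(2-\sqrt{8/(8+d)},3\bigr)$ is exactly the condition under which these quadratic forms are positive definite (the square root arising from the relevant discriminant), so a small fraction of the good terms absorbs them via Young's inequality; the $\nabla\varphi$-terms are separated from the $u$-derivatives by Young's inequality and, after collecting powers, estimated by $C\int_{\{\varphi>0\}}u^{n+2}|\nabla\varphi|^6\dx$.

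Next, the boundary terms: each integration by parts produces a surface integral over $\partial\Omega$, and using $\partial_\nu u=0$ most of them vanish or become tangential-derivative expressions; commuting tangential and normal derivatives along $\partial\Omega$ introduces the second fundamental form, leaving precisely $\int_{\partial\Omega}\varphi^6 u^{n-2}|\nabla u|^2 H(\nabla u,\nabla u)\dx$. Convexity of $\Omega$ gives $H\ge 0$, so this term is nonnegative and may be kept on the left-hand side, which both strengthens the inequality and is what is needed for the iteration in the body of the paper. Collecting everything gives the first displayed estimate.

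Finally, for the ``in particular'' statement: since $|\nabla u^{(n+2)/6}|^6=\bigl(\tfrac{n+2}{6}\bigr)^6 u^{n-4}|\nabla u|^6$, the first term on the left is immediately controlled; and expanding $\nabla\Delta u^{(n+2)/2}$ by the chain rule writes it as a linear combination (with $n$-dependent coefficients) of $u^{n/2}\nabla\Delta u$, of terms of type $u^{(n-2)/2}|\nabla u|\,D^2u$, and of terms of type $u^{(n-4)/2}|\nabla u|^3$, so that $|\nabla\Delta u^{(n+2)/2}|^2$ is pointwise bounded, via $|a+b+c|^2\le 3(|a|^2+|b|^2+|c|^2)$ and $|\Delta u|^2\le d|D^2u|^2$, by a constant times the three quantities already estimated. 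The step I expect to be the main obstacle is the multidimensional bookkeeping of the integration by parts --- in particular determining the exact coefficients of the indefinite quadratic forms, verifying that their positive-definiteness threshold is $n>2-\sqrt{8/(8+d)}$, and tracking precisely which boundary contributions survive the Neumann condition.
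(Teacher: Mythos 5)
This theorem is not proved in the paper at all: it is quoted verbatim from the literature, namely from Gr\"un's existence paper for the Cauchy problem (Theorem~III.1 and Corollary~III.2 there), which in turn builds on Bernis's one-dimensional integral inequalities. So there is no ``paper proof'' to compare against; the relevant comparison is with the cited source. Your outline does match the strategy of that source: integration by parts pairing $\varphi^6 u^n\nabla\Delta u$ against vector fields built from $u^{-2}|\nabla u|^2\nabla u$ and $u^{-4}|\nabla u|^4\nabla u$, identification of the good terms $\varphi^6u^{n-4}|\nabla u|^6$ and $\varphi^6u^{n-2}|D^2u|^2|\nabla u|^2$, absorption of the indefinite quadratic forms under a discriminant condition that produces exactly the threshold $n>2-\sqrt{8/(8+d)}$, the surviving boundary term being the second-fundamental-form expression (nonnegative by convexity, hence kept on the left), and the ``in particular'' statement following from the pointwise identities $|\nabla u^{(n+2)/6}|^6=(\tfrac{n+2}{6})^6u^{n-4}|\nabla u|^6$ and the chain-rule expansion of $\nabla\Delta u^{(n+2)/2}$. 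That last reduction is complete as you state it.

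However, as a self-contained proof your proposal has a genuine gap where the mathematical content actually lies: you assert, but do not establish, that the quadratic forms arising from the integration by parts are positive definite precisely for $n$ in the stated range, and you do not compute their coefficients. This is not a routine bookkeeping step that can be waved through --- it is the entire point of the inequality, and in $d\ge 2$ the cross term $|\nabla u|^{-2}\,\nabla u\cdot D^2u\cdot\nabla u$ must be treated as an independent quantity in the resulting system of identities, which is why the admissible range of $n$ depends on $d$. Similarly, the claim that the Neumann condition reduces all boundary contributions to the single second-fundamental-form term requires an explicit computation in boundary-adapted coordinates. Since you explicitly flag these as the expected obstacles rather than resolving them, the proposal is a correct roadmap to the known proof rather than a proof; to close it you would either have to carry out the coefficient computation (as Gr\"un does) or simply cite the result, as the paper itself does.
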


\vspace{2mm}
\subsection{Weighted energy estimate}
\label{SubSectionWeightedEnergyEstimate}

Finally, we provide a proof of the energy estimate \eqref{TWeightedEnergy} by adopting a technique that resembles the one used in the proof of \cite[Lemma 1]{FischerARMA}. 

\begin{lemma}[Weighted energy estimate]
	\label{WeightedEnergy}
	Let $\Omega = \R^d$, $n \in \left(2-\sqrt{\frac{8}{8+d}}, 3\right)$,  and $u$ be an  energy-dissipating weak solution to the thin-film equation \eqref{ThinFilmEquation} with zero contact angle in the sense of Definition~\ref{DefinitionEnergyDissipatingWeakSolution}. Let $\psi\in
	C^2_c(\mathbb{R}^d)$ be a nonnegative weight function. Then we have
	\begin{align}\label{EnergyEstimateA}
	\nonumber&\int_{\mathbb{R}^d} \frac{1}{2}|\nabla u|^2\psi \dx\Bigg|_{t_1}^{t_2}
	-\int_{t_1}^{t_2}\int_{\mathbb{R}^d} \frac{1}{2}|\nabla u|^2\psi_t \dx\dt
	\\ &~~~=-\int_{t_1}^{t_2}\int_{\{u(\cdot, t)>0\}} u^n|\nabla \Delta u|^2\psi \dx\dt
	\\&
	\qquad \nonumber-\int_{t_1}^{t_2}\int_{\{u(\cdot, t)>0\}}u^n\nabla\Delta u\cdot
	\Big(\Delta u\nabla \psi+D^2u\cdot\nabla \psi+\nabla u\cdot D^2\psi\Big)
	\dx\dt
	\end{align}
	for a.e. $t_2\geq t_1\geq 0$ and a.e. $t_2\geq 0$ in case $t_1=0$.
\end{lemma}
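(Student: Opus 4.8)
The identity \eqref{EnergyEstimateA} is the rigorous form of the formal computation that differentiates the weighted Dirichlet energy in time and integrates by parts in space. Formally,
\[
\frac{\d}{\d t}\int_{\R^d}\tfrac12|\nabla u|^2\psi\dx
=\int_{\R^d}\nabla u\cdot\nabla(\partial_t u)\,\psi\dx
+\int_{\R^d}\tfrac12|\nabla u|^2\partial_t\psi\dx ,
\]
and integration by parts in space rewrites the first term on the right as $-\int_{\R^d}\partial_t u\,\operatorname{div}(\psi\nabla u)\dx$ with $\operatorname{div}(\psi\nabla u)=\psi\Delta u+\nabla\psi\cdot\nabla u=:\phi$. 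Inserting the weak formulation of Definition~\ref{DefinitionEnergyDissipatingWeakSolution}d with this $\phi$ as test function and using
\[
\nabla\phi=\psi\,\nabla\Delta u+\Delta u\,\nabla\psi+D^2u\cdot\nabla\psi+\nabla u\cdot D^2\psi ,
\]
one obtains
\[
-\int_{\{u>0\}}u^n\nabla\Delta u\cdot\nabla\phi\dx
=-\int_{\{u>0\}}u^n|\nabla\Delta u|^2\psi\dx
-\int_{\{u>0\}}u^n\nabla\Delta u\cdot\big(\Delta u\,\nabla\psi+D^2u\cdot\nabla\psi+\nabla u\cdot D^2\psi\big)\dx ;
\]
integrating in time over $[t_1,t_2]$ (as the difference of the identity of Definition~\ref{DefinitionEnergyDissipatingWeakSolution}d on $[0,t_2]$ and on $[0,t_1]$) then gives exactly \eqref{EnergyEstimateA}.

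\textbf{Rigorous justification.}
The only issue is that $\phi=\operatorname{div}(\psi\nabla u)$ is not an admissible test function in Definition~\ref{DefinitionEnergyDissipatingWeakSolution}d: it is not Lipschitz, and its gradient involves third spatial derivatives of $u$, which are only controlled in the degenerate weighted space $\chi_{\{u>0\}}u^{n/2}\nabla\Delta u\in L^2$. I would therefore argue by approximation, following \cite[Lemma~1]{FischerARMA}: replace $u$ by a spatial mollification $u^{\rho}=u*\zeta_{\rho}$, so that $\phi^{\rho}:=\psi\Delta u^{\rho}+\nabla\psi\cdot\nabla u^{\rho}$ is smooth with compact support and hence admissible; carry out the manipulations above with $\phi^{\rho}$, where now all integrations by parts are classical; and finally pass to the limit $\rho\to0$. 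That the integrals appearing are finite, and that $t\mapsto\int_{\R^d}\tfrac12|\nabla u|^2\psi\dx$ is absolutely continuous (so that its traces at $t_1,t_2$ are defined for a.e.\ $0\le t_1\le t_2<T$, matching the statement), are routine consequences of Definition~\ref{DefinitionEnergyDissipatingWeakSolution}a--d combined with $u\in L^\infty([0,T);H^1(\R^d))$ and the Gagliardo--Nirenberg--Sobolev and Bernis--Grün inequalities recalled in the appendix.

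\textbf{Main obstacle.}
The delicate point is the limit $\rho\to0$ in $\int_{\{u>0\}}u^n\nabla\Delta u\cdot\nabla\phi^{\rho}\dx$, whose crucial contribution $\int_{\{u>0\}}u^n\psi\,\nabla\Delta u\cdot\nabla\Delta u^{\rho}\dx$ must converge to $\int_{\{u>0\}}u^n\psi\,|\nabla\Delta u|^2\dx$ even though the weight $u^{n/2}$ degenerates at the free boundary while $\nabla\Delta u$ is only square-integrable against it; this amounts to showing that the commutator between the mollification and the degenerate coefficient $u^{n}$ is negligible. This is precisely the technical heart of \cite[Lemma~1]{FischerARMA}; here it carries over with only notational changes, the sole difference being that the weight is the fixed function $\psi\in C^2_c(\R^d)$ rather than a power of a cutoff, so that no new ideas are required.
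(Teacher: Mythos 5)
Your blueprint coincides with the paper's: regularize, test the weak formulation of Definition~\ref{DefinitionEnergyDissipatingWeakSolution}d with (a mollified version of) $-\operatorname{div}(\psi\nabla u)$, expand the gradient of the test function into the four terms of \eqref{EnergyEstimateA}, and pass to the limit in the mollification parameter, the crux being the degenerate product $u^n\nabla\Delta u\cdot\nabla\Delta(\rho_\delta\ast u)$. One step, however, would fail as written: with your test function $\phi^\rho=\operatorname{div}(\psi\nabla u^\rho)$ the pairing $\int\langle\partial_t u,\phi^\rho\rangle\dt$ is \emph{not} the increment of $\frac12\int|\nabla u^\rho|^2\psi\dx$. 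To obtain that increment one needs $\langle\partial_t u,\rho_\delta\ast\operatorname{div}(\psi\nabla u^\rho)\rangle=\langle\partial_t(\rho_\delta\ast u),\operatorname{div}(\psi\nabla u^\rho)\rangle$, and since $\psi$ sits between the two occurrences of $u$, $\operatorname{div}(\psi\nabla u^\rho)$ is not of the form $\rho_\delta\ast g$; the discrepancy $\langle\partial_t u,(\mathrm{id}-\rho_\delta\ast)\operatorname{div}(\psi\nabla u^\rho)\rangle$ cannot simply be dropped, because $\partial_t u$ lives only in $(W^{1,p}(\R^d))'$ while the commutator involves up to third derivatives of $u^\rho$ and does not converge in $W^{1,p}$. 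This is exactly why the paper takes the doubly mollified test function $-\nabla\cdot(\rho_\delta\ast(\psi\,(\rho_\delta\ast\nabla u)))$, for which the outer mollifier transposes onto $\partial_t u$ and the left-hand side becomes precisely $\frac{d}{dt}\frac12\int|\nabla(\rho_\delta\ast u)|^2\psi\dx$ up to the $\psi_t$ term. You should adopt that choice.

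Beyond this, you have correctly isolated the main obstacle, but outsourcing it with the phrase ``carries over with only notational changes'' hides essentially all of the analysis; the paper reproduces the argument in full. The domination works by truncating $u$ at a locally adapted level $\beta(x_0)=\nu\big(\dashint_{B_\delta(x_0)}u^n\dx\big)^{1/n}$, estimating $|\nabla\Delta(\rho_\delta\ast(u-f_\beta(u)))|$ by $C\beta^{-n/2}\dashint_{B_\delta(x_0)}|\nabla u^{(n+2)/6}|^3\dx$ via Poincar\'e--Sobolev, estimating $|\nabla\Delta(\rho_\delta\ast f_\beta(u))|$ by mollified averages of $\beta^{-n/2}u^{n/2}|\nabla\Delta u|$ and related quantities, and cancelling the $\beta^{-n/2}$ against the factor $\big(\dashint_{B_\delta(x_0)}u^n\dx\big)^{1/2}$ produced by Cauchy--Schwarz applied to $\rho_\delta\ast(u^n\nabla\Delta u)$; this yields a uniform integrable majorant and identifies the limit as zero on $\{u=0\}$. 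The terms generated by $\nabla\psi$ and $D^2\psi$ additionally require $u^{(n-1)/3}D^2u\in L^3(\R^d\times[0,T])$, which the paper obtains from a separate integration by parts. So the proposal is the right skeleton, but it is incomplete until the test function is corrected and the limit passage is actually carried out.
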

\begin{remark}
	Let $\varphi \in C^\infty(\R^d)$. By applying Gr\"{u}n's weighted  inequality and Young's inequality, from the estimate above -- with $\psi := t^\beta \varphi^6$ and $\beta \in (0,1)$ -- we deduce \eqref{TWeightedEnergy}. 
	We observe that in \cite[Corollary 2.3]{GruenWTWS} inequality \eqref{TWeightedEnergy}  is proved for energy-dissipating weak solutions to the thin-film equation \eqref{ThinFilmEquation} in the sense of Definition~\ref{DefinitionEnergyDissipatingWeakSolution} as constructed  with the approximation procedure in \cite{CauchyGruen}.
\end{remark}

\begin{proof}[Proof of Lemma \ref{WeightedEnergy}]
	Let $\psi\in
	C^2_c(\mathbb{R}^d)$ be a nonnegative weight function and let $\rho_\delta\in C^\infty_c(\mathbb{R}^d)$ denote a standard mollifier with
	respect to space. Assume that $\dist(\supp \psi,  \partial\Omega \times (0,T))>\delta$.
	Using $-\nabla\cdot (\rho_\delta\ast (\psi~(\rho_\delta
	\ast \nabla u)))$ as a test function in the weak formulation of the thin-film
	equation  yields
	\begin{align}
	\nonumber
	&\int_{\mathbb{R}^d} \frac{1}{2}|\nabla \rho_\delta \ast u|^2 \psi \dx\bigg|_{t_1}^{t_2}
	-\int_{t_1}^{t_2}\int_{\mathbb{R}^d} \frac{1}{2}|\nabla \rho_\delta \ast u|^2 \psi_t
	\dx\dt
	\\&~~~
	\nonumber
	=-\int_{t_1}^{t_2}\int_{\mathbb{R}^d} \Big(\rho_\delta \ast u^n\nabla \Delta
	u\Big) \cdot \psi \nabla \Delta (\rho_\delta \ast u) \dx\dt
	\\&
	\label{A3FirstEquation}
	\qquad -\int_{t_1}^{t_2}\int_{\mathbb{R}^d} \Big(\rho_\delta \ast u^n\nabla \Delta u
	\Big) \cdot D^2(\rho_\delta \ast u)\cdot \nabla \psi \dx\dt
	\\&
	\nonumber
	\qquad-\int_{t_1}^{t_2}\int_{\mathbb{R}^d} \Big(\rho_\delta \ast u^n\nabla \Delta
	u\Big) \cdot \nabla \psi ~\Delta (\rho_\delta \ast u) \dx\dt
	\\&
	\nonumber
	\qquad-\int_{t_1}^{t_2}\int_{\mathbb{R}^d} \Big(\rho_\delta \ast u^n\nabla \Delta
	u\Big) \cdot D^2\psi \cdot \nabla (\rho_\delta \ast u) \dx\dt
	~.
	\end{align}
	We intend to pass to the limit as $\delta\rightarrow 0$. 
	Since $u \in L^\infty((0,T); H^1(\mathbb{R}^d))$, the terms on the
	left-hand side converge for a.e. $t_1$, $t_2$ to  \begin{align*} \int_{\mathbb{R}^d} \frac{1}{2}|\nabla u|^2\psi \dx\Bigg|_{t_1}^{t_2}
	-\int_{t_1}^{t_2}\int_{\mathbb{R}^d} \frac{1}{2}|\nabla u|^2\psi_t \dx\dt.\end{align*}
	
	By the definition of weak energy-dissipating solution, we have 
	 \begin{align*} \nabla u^{\frac{n+2}{6}} \in L^6((0,T); L^6(\mathbb{R}^d)) \quad \text{ and } \quad u^{\frac{n}{2}}\nabla \Delta u \in L^2((0,T); L^2(\mathbb{R}^d)).\end{align*}
	From Gagliardo-Nirenberg-Sobolev's embedding theorem and the property of conservation of mass, it follows that
	\begin{align*} u^{\frac{n+2}{6}} \in L^6((0,T); L^6(\mathbb{R}^d)).\end{align*} Therefore, 
	\begin{align*}\nabla u = \frac{6}{n+2} u^{\frac{4-n}{6}} \nabla u^{\frac{n+2}{6}} \in L^{n+2}\left((0,T); L^{n+2}(\R^d)\right).\end{align*}
	Moreover, 
	\begin{align*}
	u^{\frac{n}{2}} = \left(u^{\frac{n+2}{6}}\right)^{\frac{3n}{n+2}} \in L^{\frac{2(n+2)}{n}}\left((0,T);L^{\frac{2(n+2)}{n}}(\R^d)\right).
	\end{align*}
	In addition, due to $d \le 3$, by Sobolev's embedding theorem, for a.\,e.\ time $t\in [0,T]$ the function $u^{\frac{n+2}{6}}(\cdot,t)$ (and therefore $u(\cdot,t)$) is continuous.
	As a consequence, we have $\nabla \Delta u(\cdot,t) \in L^2_{loc}(\{u(\cdot,t)>0\})$ for a.e.
	 $t\in [0,T]$. From the regularity theory for elliptic operators, it follows that  $u(\cdot,t)\in H^3_{loc}(\{u(\cdot,t)>0\})$ for a.e.
	 $t\in [0,T]$. 
	Thus, on $\{u>0\}$, we immediately obtain pointwise convergence a.e. of the integrands on the
	right-hand side in formula \eqref{A3FirstEquation} in the limit $\delta\rightarrow 0$. 
	It remains to show that the integrands are dominated by integrable functions and to
	identify the pointwise limit on $\{u=0\}$ to infer convergence of the integrals.

	We start by studying the first integrand on the right-hand side of formula \eqref{A3FirstEquation}. Consider a smooth monotonous function $g$, with $0 \le g \le 1$, such that 
	$g \equiv 0$ for $x < 1/2$ and $g \equiv 1$ for $x>1$ and let
	 \begin{align}
	 \label{A3DefFbeta}
	 f_\beta(v) = \int_0^v g \left(\frac{s-\beta}{\beta} \right) \ds + \int_0^{2\beta} 1 - g \left(\frac{s-\beta}{\beta} \right) \ds,
	 \end{align}
	where $\beta > 0$. Note that this definition in particular entails $f_\beta(v)=v$ for any $v\geq 2\beta$ and $|f_\beta(v)-v|\leq 2 \beta$ for any $v\geq 0$.
	We may then rewrite
	\begin{align}
	\label{A3ThirdDerivative}
	\nabla\Delta (\rho_\delta \ast u)
	&=\nabla \Delta (\rho_\delta \ast (u-f_\beta(u))) 
	+\nabla\Delta (\rho_\delta \ast f_\beta(u))
	\\& =: I_{11} + I_{12}. \nonumber
	\end{align}
	We start by estimating $I_{11}$ as follows. 
	\begin{align*}
	&\Big|\nabla\Delta (\rho_\delta\ast (u-f_\beta(u)))\Big|(x_0)
	\leq C \delta^{-3} \dashint_{B_\delta(x_0)} |u-f_\beta(u)| \dx
	\\&~~~
	\leq C \delta^{-3} \beta \dashint_{B_\delta(x_0)} \chi_{\{u<2\beta\}} \dx
	\leq C\delta^{-3} \beta^{-\frac{n}{2}} \dashint_{B_\delta(x_0)}
	\Big((3\beta)^\frac{n+2}{6}-u^\frac{n+2}{6}\Big)_+^3 \dx
	\\&~~~
	\leq C\delta^{-3} \beta^{-\frac{n}{2}}
	\Bigg(\dashint_{B_\delta(x_0)}
	\left|u^\frac{n+2}{6}-\dashint_{B_\delta(x_0)} u^\frac{n+2}{6}(y) ~dy
	\right|^3\dx
	\\&~~~~~~~~~~~~~~~~~~~~~
	\qquad \quad +\left((3\beta)^\frac{n+2}{6}-\dashint_{B_\delta(x_0)} u^\frac{n+2}{6}(y) ~dy
	\right)_+^3
	\Bigg) ~.
	\end{align*}
	Choose \begin{align}
	\label{A3ChoiceBeta}
	\beta(x_0):=\nu\left(\dashint_{B_\delta(x_0)}
	u^n\dx\right)^\frac{1}{n}
	\end{align}
	with some $\nu>0$ to be fixed.
	Then, by the Poincar\'{e} inequality and the Sobolev embedding theorem, we obtain
	\begin{align*}
	&\Big|\nabla\Delta (\rho_\delta\ast (u-f_\beta(u)))\Big|(x_0)
	\\
	&~~~\leq C \beta^{-\frac{n}{2}} \dashint_{B_\delta(x_0)}
	\big|\nabla u^\frac{n+2}{6}\big|^3\dx
	\\&\qquad 
	+C\delta^{-3} \beta^{-\frac{n}{2}}
	\Bigg(C\nu^\frac{n+2}{6} \dashint_{B_\delta(x_0)} u^\frac{n+2}{6} \dx
	\\ &\qquad\qquad\qquad\qquad\quad +C\nu^\frac{n+2}{6}\delta\left(\dashint_{B_\delta(x_0)} |\nabla
	u^\frac{n+2}{6}|^3 \dx\right)^\frac{1}{3}
	-\dashint_{B_\delta(x_0)} u^\frac{n+2}{6} \dx \Bigg)_+^3
	~.
	\end{align*}
	Choosing $\nu>0$ small enough depending only on $n$ and $d$, we infer
	\begin{align}
	\label{A3BoundThirdDerivative}
	&\Big|\nabla\Delta (\rho_\delta\ast (u-f_\beta(u)))\Big|(x_0)
	\leq C \beta^{-\frac{n}{2}}
	\dashint_{B_\delta(x_0)} \Big|\nabla u^\frac{n+2}{6}\Big|^3 \dx
	~.
	\end{align}
	Secondly, we analyze $I_{12}$. We have
	\begin{align*}
	&\nabla\Delta (\rho_\delta \ast f_\beta(u))
	\\&~~~
	=\rho_\delta \ast \Big(f_\beta'(u)\nabla\Delta u+2f_\beta''(u)\nabla u\cdot D^2
	u +f_\beta''(u)\nabla u\Delta u+f_\beta'''(u)|\nabla u|^2\nabla u\Big),
	\end{align*}
	which implies (using the fact that $f''_\beta(v)=0$ and $f'''_\beta(v)=0$ for $v \notin [\beta, 2\beta]$ as well as the fact that $f'_\beta(v)=0$ for $v<\beta$ and the bounds $|f'_\beta|\leq C$, $|f''_\beta|\leq C \beta^{-1}$, and $|f'''_\beta|\leq C \beta^{-2}$)
	\begin{align}
	\label{A3BoundThirdDerivative2}
	&\Big|\nabla \Delta (\rho_\delta \ast f_\beta(u))\Big|
	\\&~~~~~
	\nonumber
	\leq \rho_\delta \ast
	\Big(\beta^{-\frac{n}{2}}|u^\frac{n}{2}\nabla\Delta u|
	+C\beta^{-\frac{n}{2}}|u^\frac{n-2}{2}\nabla u\otimes D^2u|
	+\beta^{-\frac{n}{2}}u^\frac{n-4}{2}|\nabla u|^3
	\Big)
	.
	\end{align}
	Summing up, we obtain
	\begin{align*}
	|\nabla \Delta (\rho_\delta\ast u)(x_0)|
	\leq C\beta^{-\frac{n}{2}} \dashint_{B_\delta(x_0)}\left(u^\frac{n}{2}|\nabla\Delta u|
	+u^\frac{n-2}{2}|\nabla u\otimes D^2 u|+ \left|\nabla u^\frac{n+2}{6}\right|^3\right)\dx
	~.
	\end{align*}
For a.e. $t \in (0,T)$, we have $\nabla u^{\frac{n+2}{6}} \in L^6(\R^d)$, $u^{\frac{n}{2}}\nabla \Delta u \in L^2(\R^d)$, and $u^{\frac{n-2}{2}} \nabla u \otimes D^2u \in L^2(\R^d)$.
Taking into account the estimate
	\begin{align*}
	|\rho_\delta \ast (u^n\nabla \Delta u) (x_0)|
	\leq
	C\left(\dashint_{B_\delta(x_0)} u^n|\nabla\Delta u|^2\dx\right)^\frac{1}{2}
	 \left(\dashint_{B_\delta(x_0)}u^n\dx\right)^\frac{1}{2}~,
	\end{align*}
	we infer by \eqref{A3ThirdDerivative}, \eqref{A3ChoiceBeta}, \eqref{A3BoundThirdDerivative}, and \eqref{A3BoundThirdDerivative2}
	\begin{align*}
	&\Big|(\rho_\delta \ast (u^n\nabla \Delta u)) (x_0)~\cdot~
	\psi(x_0)~~\nabla\Delta (\rho_\delta \ast u)(x_0)\Big|
	\\
	&~~~\leq
	C\left(\dashint_{B_\delta(x_0)}u^n|\nabla \Delta u|^2\dx\right)^\frac{1}{2} \psi(x_0) \\ &\qquad \qquad
	\times \dashint_{B_\delta(x_0)}\left(u^\frac{n}{2}|\nabla\Delta u|
	+u^\frac{n-2}{2}|\nabla u\otimes D^2 u|+ \left|\nabla u^\frac{n+2}{6}\right|^3\right)\dx
	\end{align*}
	This shows that the first integrand on the right-hand side of \eqref{A3FirstEquation} is dominated by a (space-time) integrable function and also implies that the pointwise limit of the
	integrand vanishes on $\{u(\cdot,t)=0\}$ for a.\,e.\ $t\in [0,T]$.
	
	\medskip
	For the other integrands on the right-hand side of \eqref{A3FirstEquation}, we use analogous arguments. Let us sketch the estimates for the second one.  Consider as before a smooth monotonous function $g$, with $0 \le g \le 1$, such that 
	$g \equiv 0$ for $x < 1/2$ and $g \equiv 1$ for $x>1$, and let $f_\beta$ be as in \eqref{A3DefFbeta}.
	We then write
	\begin{align*}
	D^2 (\rho_\delta \ast u)
	&=D^2 (\rho_\delta \ast (u-f_\beta(u))) 
	+D^2 (\rho_\delta \ast f_\beta(u))
	\\& =: I_{21} + I_{22}.
	\end{align*}
	We start by estimating $I_{21}$ as
	\begin{align*}
	&\Big|D^2 (\rho_\delta\ast (u-f_\beta(u)))\Big|(x_0)
	\leq C\delta^{-2} \dashint_{B_\delta(x_0)} |u-f_\beta(u)| \dx
	\\&~~~~~~~~~ \ \
	\leq C\delta^{-2} \beta \dashint_{B_\delta(x_0)} \chi_{\{u<2\beta\}} \dx
	\leq C\delta^{-2} \beta^{-\frac{n-1}{3}} \dashint_{B_\delta(x_0)}
	\Big((3\beta)^\frac{n+2}{6}-u^\frac{n+2}{6}\Big)_+^2 \dx
	\\&~~~~~~~~~ \ \ 
	\leq C\delta^{-2} \beta^{-\frac{n-1}{3}}
	\Bigg(\dashint_{B_\delta(x_0)}
	\left|u^\frac{n+2}{6}-\dashint_{B_\delta(x_0)} u^\frac{n+2}{6}(y) ~dy
	\right|^2\dx
	\\&~~~~~~~~~~~~~~~~~~~~~~~~
	\qquad \quad +\Bigg((3\beta)^\frac{n+2}{6}-\dashint_{B_\delta(x_0)} u^\frac{n+2}{6}(y) ~dy
	\Bigg)_+^2
	\Bigg) ~.
	\end{align*}
	Choosing $\beta$ as in \eqref{A3ChoiceBeta}, we obtain by the Poincar\'{e} inequality and the Sobolev embedding
	\begin{align*}
	&\Big|D^2 (\rho_\delta\ast (u-f_\beta(u)))\Big|(x_0)
	\\
	&~~~\leq C \beta^{-\frac{n-1}{3}} \dashint_{B_\delta(x_0)}
	\big|\nabla u^\frac{n+2}{6}\big|^2\dx
	\\&\qquad
	+C\delta^{-2} \beta^{-\frac{n-1}{3}}
	\Bigg(C\nu^\frac{n+2}{6} \dashint_{B_\delta(0)} u^\frac{n+2}{6} \dx
	\\ &\qquad\qquad\qquad\qquad\quad +C\nu^\frac{n+2}{6}\delta\Bigg(\dashint_{B_\delta(x_0)} |\nabla
	u^\frac{n+2}{6}|^2 \dx \Bigg)^\frac{1}{2}
	-\dashint_{B_\delta(x_0)} u^\frac{n+2}{6} \dx \Bigg)_+^2
	~.
	\end{align*}
	Choosing $\nu>0$ small enough (depending only on $n$ and $d$), we infer
	\begin{align*}
	&\Big|D^2 (\rho_\delta\ast (u-f_\beta(u)))\Big|(x_0)
	\leq C \beta^{-\frac{n-1}{3}}
	\dashint_{B_\delta(0)} \Big|\nabla u^\frac{n+2}{6}\Big|^2 \dx
	~.
	\end{align*}
	We next estimate $I_{22}$. To this aim, we may rewrite
	\begin{align*}
	&D^2 (\rho_\delta \ast f_\beta(u))
	=\rho_\delta \ast \Big(f_\beta'(u)D^2 u+f_\beta''(u)\nabla u \otimes \nabla u\Big),
	\end{align*}
	which implies
	\begin{align*}
	&\Big|D^2 (\rho_\delta \ast f_\beta(u))\Big|
	\\&~~~~~~~~
	\leq \rho_\delta \ast
	\Big(\beta^{-\frac{n-1}{3}}u^\frac{n-1}{3}|D^2 u|
	+C\beta^{-\frac{n-1}{3}}u^\frac{n-4}{3}|\nabla u|^2
	\Big)
	~.
	\end{align*}
	For a.e. $t \in (0,T)$, we have $\nabla u^{\frac{n+2}{6}} \in L^6(\R^d\times [0,T])$ and $u^{\frac{n}{2}} \nabla \Delta u \in L^2(\R^d\times [0,T])$. Using these facts and the computation
	\begin{align*}
	&\sum_{i,j} \int u^{n-1} |\partial_i \partial_j u|^3 \dx
	\\&
	= -2\sum_{i,j} \int u^{n-1} |\partial_i \partial_j u| \partial_j u \, \partial_i^2 \partial_j u \dx - (n-1) \sum_{i,j} \int u^{n-2} |\partial_i \partial_j u| \partial_i \partial_j u  \partial_j u \, \partial_i u \dx
	\end{align*}
	as well as H\"older's inequality,
	we can show that $u^{\frac{n-1}{3}}D^2u \in L^3(\R^d\times [0,T])$. Then we can establish convergence of the second integral on the right-hand side of \eqref{A3FirstEquation} arguing as we have done for the first integral.

The convergence of the other integrals on the right-hand side of \eqref{A3FirstEquation} in the limit $\delta\rightarrow 0$ may be shown analogously, thereby establishing Lemma~\ref{WeightedEnergy}.
\end{proof}

\vspace{3mm}
\section*{Acknowledgments}
N.\ De~Nitti acknowledges the kind hospitality of IST Austria within the framework of the \emph{ISTernship Summer Program 2018}, during which most of the present paper was written. N.\ De~Nitti has received funding by The Austrian Agency for International Cooperation in Education \& Research (OeAD-GmbH) via its financial support of the \emph{ISTernship Summer Program 2018}. N. De~Nitti would also like to thank Giuseppe Coclite, Giuseppe Devillanova, Giuseppe Florio, Sebastian Hensel, and Francesco Maddalena for several helpful conversations on topics related to this work.

\vspace{3mm}
\bibliographystyle{abbrv}
\bibliography{thinfilm}

\end{document}